\documentclass[a4paper,10pt]{amsart}
\textwidth16cm\textheight21cm\oddsidemargin-0.1cm\evensidemargin-0.1cm
\parindent0.0em
\usepackage{amsmath,amssymb,hyperref,multirow}

\theoremstyle{plain}
\newtheorem{theorem}{\bf Theorem}[section]
\newtheorem{proposition}[theorem]{\bf Proposition}
\newtheorem{lemma}[theorem]{\bf Lemma}
\newtheorem{corollary}[theorem]{\bf Corollary}

\theoremstyle{definition}
\newtheorem{example}[theorem]{\bf Example}

\newtheorem{remark}[theorem]{\bf Remark}

\hypersetup{hypertexnames=false}
\numberwithin{equation}{section}
\setlength{\tabcolsep}{3pt}

\begin{document}

\title{On orders in quadratic number fields with unusual sets of distances}

\author{Andreas Reinhart}
\address{Institut f\"ur Mathematik und wissenschaftliches Rechnen, Karl-Franzens-Universit\"at Graz, NAWI Graz, Heinrichstra{\ss}e 36, 8010 Graz, Austria}
\email{andreas.reinhart@uni-graz.at}

\keywords{fundamental unit, order, real quadratic number field, set of distances}

\subjclass[2020]{11R11, 11R27, 13A15, 13F15, 20M12, 20M13}

\thanks{This work was supported by the Austrian Science Fund FWF, Project Number P36852-N}

\begin{abstract}
Let $\mathcal{O}$ be an order in an algebraic number field and suppose that the set of distances $\Delta(\mathcal{O})$ of $\mathcal{O}$ is nonempty (equivalently, $\mathcal{O}$ is not half-factorial). If $\mathcal{O}$ is seminormal (in particular, if $\mathcal{O}$ is a principal order), then $\min\Delta(\mathcal{O})=1$. So far, only a few examples of orders were found with $\min\Delta(\mathcal{O})>1$. We say that $\Delta(\mathcal{O})$ is unusual if $\min\Delta(\mathcal{O})>1$. In the present paper, we establish algebraic characterizations of orders $\mathcal{O}$ in real quadratic number fields with $\min\Delta(\mathcal{O})>1$. We also provide a classification of the real quadratic number fields that possess an order whose set of distances is unusual. As a consequence thereof, we revisit certain squarefree integers (cf. OEIS A135735) that were studied by A. J. Stephens and H. C. Williams.
\end{abstract}

\maketitle

\section{Introduction and terminology}\label{1}

Let $D$ be an integral domain with quotient field $K$ and let $a\in D$ be nonzero. A nonzero nonunit of $D$ is called an {\it atom} of $D$ if it cannot be written as a product of two nonunits of $D$. We say that $D$ is {\it atomic} if every nonzero nonunit of $D$ is a finite product of atoms of $D$. If $a=\eta\prod_{i=1}^k u_i\in D$, with $\eta$ a unit of $D$, $k$ a nonnegative integer and atoms $u_i$ of $D$, then $k$ is called a {\it factorization length} of $a$ and the set $\mathsf{L}(a)$ of all such $k$ is called the set of lengths of $a$. Note that if $D$ is Noetherian, then $D$ is atomic and $\mathsf{L}(b)$ is finite for each nonzero $b\in D$. Moreover, $D$ is said to be {\it half-factorial} if $\mathsf{L}(b)$ is a singleton for every nonzero $b\in D$. Clearly, every half-factorial domain is atomic. Besides that, $D$ is said to be {\it seminormal} if for each $x\in K$ with $x^2,x^3\in D$, it follows that $x\in D$. Let $\Delta(\mathsf{L}(a))$ be the set of distances of $\mathsf{L}(a)$, that is, the set of all positive integers $n$ for which we can find $k,\ell\in\mathsf{L}(a)$ with $k<\ell$ and $n=\ell-k$ such that $k$ and $\ell$ are the only lengths of $a$ that are between $k$ and $\ell$. The set of distances $\Delta(D)$ of $D$ is the union of all sets of distances over all sets of lengths of nonzero elements of $D$.

\smallskip
The set of distances is among the best investigated invariants in factorization theory. By definition, $\Delta(D)=\emptyset$ if and only if $D$ is half-factorial. If $\Delta(D)\not=\emptyset$, then $\min\Delta(D)=\gcd\Delta(D)$. There is a Dedekind domain $R_1$ such that $\Delta(R_1)$ is the set of positive integers and for every finite set $\Delta$ consisting of positive integers with $\min\Delta=\gcd\Delta$, there is a Dedekind domain $R_2$ with $\Delta(R_2)=\Delta$ (see \cite{Ge-Sc17a}).

\smallskip
The situation is quite different for orders in algebraic number fields. Let $\mathcal{O}$ be an order in an algebraic number field. Then its set of distances $\Delta(\mathcal{O})$ is finite (see \cite[Theorem 3.7.1]{Ge-HK06a}). Suppose that $\mathcal{O}$ is not half-factorial (for a recent characterization of half-factoriality see \cite{Ra24a}). If $\mathcal{O}$ is a principal order, then $\Delta(\mathcal{O})$ is an interval (see \cite{Ge-Yu12b,Ge-Zh19a}). If $\mathcal{O}$ is seminormal (this includes principal orders), then $\min\Delta(\mathcal{O})=1$ (see \cite[Theorem 1.1]{Ge-Zh16a}). So far, only a few examples of orders were found with $\min\Delta(\mathcal{O})>1$.

\smallskip
Let $\mathcal{O}$ be an order in a quadratic number field. In the present paper, we put our focus on the situation that $\min\Delta(\mathcal{O})>1$. Since sets of distances with this property are rather special, we say that $\Delta(\mathcal{O})$ is {\it unusual} if $\min\Delta(\mathcal{O})>1$. For the sake of simplicity, we set $\min\Delta(\mathcal{O})=0$ if $\Delta(\mathcal{O})=\emptyset$ (i.e., if $\mathcal{O}$ is half-factorial). The orders in quadratic number fields whose set of distances is unusual have already been characterized in \cite[Theorem 4.14]{Br-Ge-Re20}. In particular, it was shown in the aforementioned paper that if $\mathcal{O}$ is an order in a quadratic number field, then $\min\Delta(\mathcal{O})\in\{0,1,2\}$ and if $\mathcal{O}$ is an order in an imaginary quadratic number field, then $\min\Delta(\mathcal{O})\in\{0,1\}$. In particular, one can restrict to real quadratic number fields if one wants to describe the orders in quadratic number fields whose set of distances is unusual.

\smallskip
While the characterization in \cite{Br-Ge-Re20} provided a first important step, there are still many questions and open problems concerning orders in quadratic number fields whose set of distances is unusual. For instance, it is not known if the class number of the principal order of an order with unusual set of distances has to be $2$ or what the conductors of orders with unusual set of distances can look like. Another open problem is to find a simple criterion that provides all the orders in quadratic number fields with unusual set of distances.

\smallskip
Our paper is structured as follows. First, we present various necessary conditions for an order to have an unusual set of distances. It is shown that the class number (of the principal order) has to be $2$ (Theorem~\ref{theorem 2.6}) and that the order cannot be transfer Krull (Proposition~\ref{proposition 2.1}). Based on these results, we establish three new characterizations of orders with unusual set of distances (Corollaries~\ref{corollary 2.7} and~\ref{corollary 2.8}, Theorem~\ref{theorem 2.9}). We continue our study, by investigating the set of conductors of all orders in a given real quadratic number field whose set of distances is unusual (Propositions~\ref{proposition 3.1} and~\ref{proposition 3.4}). We present a new criterion to construct orders with unusual set of distances (Proposition~\ref{proposition 3.6}) and show (Example~\ref{example 3.7}) that this criterion is sharper than the criterion presented in \cite[Proposition 4.19]{Br-Ge-Re20}. As a consequence, we establish a simplified characterization for orders $\mathcal{O}$ with $\min\Delta(\mathcal{O})>1$ in the case that the norm of the fundamental unit is $-1$ (Theorem~\ref{theorem 3.9}). Besides showing that the aforementioned new criterion already characterizes the orders with unusual set of distances (Theorem~\ref{theorem 4.4}), we also give a classification of all real quadratic number fields that possess such an order (Theorem~\ref{theorem 5.4}). Finally, we present and discuss a variety of examples. To construct some of these examples, we need to revisit certain squarefree integers $d\in\mathbb{N}_{\geq 2}$ (cf. OEIS A135735) that were studied in \cite{Ch-Sa14,Mo13,St-Wi88}.

\bigskip
Next we discuss the used terminology. We denote by $\mathbb{P}$, $\mathbb{N}$, $\mathbb{N}_0$, $\mathbb{Z}$, $\mathbb{Q}$ the sets of prime numbers, positive integers, nonnegative integers, integers and rational numbers, respectively. For $a,b,k\in\mathbb{Z}$, let $[a,b]=\{x\in\mathbb{Z}:a\leq x\leq b\}$ and $\mathbb{N}_{\geq k}=\{x\in\mathbb{N}:x\geq k\}$. For each set $X$, let $|X|$ be the cardinality of $X$.

\smallskip
Let $I$ be an ideal of $D$, let $P$ be a prime ideal of $D$ and let $R$ be an overring of $D$ (i.e., an intermediate ring of $D$ and $K$). By $D^{\times}$, we denote the unit group of $D$ and by $\mathcal{A}(D)$ we denote the set of atoms of $D$. For subgroups $G,H$ of $K^{\times}$ with $H\subseteq G$ let $(G:H)=|G/H|$. Furthermore, let ${\rm spec}(D)$ denote the set of prime ideals of $D$. Recall that $I$ is {\it principal} if $I=aD=\{ax:x\in D\}$ for some $a\in D$. Moreover, $I$ is said to be {\it invertible} if there are some ideal $J$ of $D$ and some nonzero $y\in D$ such that $IJ=yD$. Let $\mathcal{I}^*(D)$ denote the monoid of invertible ideals of $D$ (equipped with ideal multiplication). By ${\rm Pic}(D)$ we denote the Picard group of $D$. It measures how far invertible ideals of $D$ are from being principal. Observe that ${\rm Pic}(D)$ is trivial if and only if each invertible ideal of $D$ is principal. The precise definition of the Picard group can be found in \cite{Ge-HK06a}.

\smallskip
For each invertible ideal $J$ of $D$, let $[J]\in {\rm Pic}(D)$ denote the class of $J$. Furthermore, let $\sqrt{I}=\{x\in D:x^k\in I$ for some $k\in\mathbb{N}\}=\bigcap_{P\in {\rm spec}(D),I\subseteq P} P$ be the radical of $I$. Moreover, let $X_P=\{x/y:x\in X,y\in D\setminus P\}$ for each subset $X$ of $K$. Observe that $I_P=ID_P$ and $R_P=RD_P$. Finally, we say that $D$ is {\it transfer Krull} if there exists a transfer homomorphism from the monoid of nonzero elements of $D$ into a Krull monoid. For the precise definitions of transfer homomorphisms and Krull monoids, we refer to \cite{Ge-HK06a}. Note that if $D$ is a Krull domain (i.e., the monoid of nonzero elements of $D$ is a Krull monoid) or $D$ is a half-factorial domain, then $D$ is transfer Krull (see e.g. \cite{Ba-Re22}).

\smallskip
Let $d\in\mathbb{N}_{\geq 2}$ be squarefree, let $K=\mathbb{Q}(\sqrt{d})$ and let $\mathcal{O}_K$ be the principal order in $K$ (i.e., the ring of algebraic integers in $K$). Moreover, set $\omega=\begin{cases}\sqrt{d}&\textnormal{if }d\equiv 2,3\mod 4,\\\frac{1+\sqrt{d}}{2}&\textnormal{if }d\equiv 1\mod 4,\end{cases}$ $\mathsf{d}_K=\begin{cases}
4d&\textnormal{if }d\equiv 2,3\mod 4,\\ d&\textnormal{if }d\equiv 1\mod 4,\end{cases}$ and let ${\rm N}:K\rightarrow\mathbb{Q}$ defined by ${\rm N}(a+b\sqrt{d})=a^2-db^2$ for each $a,b\in\mathbb{Q}$ be the norm map on $K$. It is well known that $\mathcal{O}_K=\mathbb{Z}[\omega]=\mathbb{Z}\oplus\omega\mathbb{Z}$.

\smallskip
We say that a subring $\mathcal{O}$ of $K$ with quotient field $K$ is an order in $K$ if $\mathcal{O}$ is a finitely generated $\mathbb{Z}$-module. It is well known that $\mathcal{O}_K$ is the largest order in $K$ with respect to inclusion. Let $f\in\mathbb{N}$. We set $\mathcal{O}_f=\mathbb{Z}+f\omega\mathbb{Z}$. Note that $\mathcal{O}_f$ is the unique order in $K$ with conductor $f$ (i.e., $\{x\in K:x\mathcal{O}_K\subseteq\mathcal{O}_f\}=f\mathcal{O}_K$) and $p\mathbb{Z}+f\omega\mathbb{Z}$ is the unique maximal ideal of $\mathcal{O}_f$ that contains $p\mathbb{Z}$ for each $p\in\mathbb{P}$ with $p\mid f$. For each nonzero ideal $J$ of $\mathcal{O}_f$, let ${\rm N}(J)=|\mathcal{O}_f/J|$, called the norm of $J$. Observe that ${\rm N}(a\mathcal{O}_f)=|{\rm N}(a)|$ for each nonzero $a\in\mathcal{O}_f$ (where $|\cdot|$ is the absolute value). Let $\varepsilon_K$ be the fundamental unit of $\mathcal{O}_K$ such that $\varepsilon_K>1$. If it is clear what field $K$ is considered, then we will write $\varepsilon$ instead of $\varepsilon_K$. If $\varepsilon=a+b\omega$ for $a,b\in\mathbb{N}$, then we say that $a$ is the {\it first component} of $\varepsilon$ and $b$ is the {\it second component} of $\varepsilon$.

\smallskip
For $a,b\in\mathbb{Z}$, let $\pmb{\Big(}\frac{a}{b}\pmb{\Big)}\in\{-1,0,1\}$ be the Kronecker symbol of $a$ modulo $b$. If $p\in\mathbb{P}$, then we say that $p$ is an inert prime, a ramified prime, a split prime (in $\mathcal{O}_K$) if $\pmb{\Big(}\frac{\mathsf{d}_K}{p}\pmb{\Big)}=-1$, $\pmb{\Big(}\frac{\mathsf{d}_K}{p}\pmb{\Big)}=0$, $\pmb{\Big(}\frac{\mathsf{d}_K}{p}\pmb{\Big)}=1$, respectively. We will use without further mention the fact that $2$ is ramified if and only if $d\not\equiv 1\mod 4$ and that $2$ is inert if and only if $d\equiv 5\mod 8$. Also note that for each odd $p\in\mathbb{P}$, $\pmb{\Big(}\frac{2}{p}\pmb{\Big)}=1$ if and only if $p\equiv 1,7\mod 8$ and $\pmb{\Big(}\frac{-1}{p}\pmb{\Big)}=1$ if and only if $p\equiv 1\mod 4$.

\smallskip
Let $p\in\mathbb{P}$ be such that $p\mid f$. Let $\mathsf{v}_p(f)$ be the $p$-adic exponent of $f$. Furthermore, let $\mathcal{I}_p^*(\mathcal{O}_f)=\{I\in\mathcal{I}^*(\mathcal{O}_f):\sqrt{I}\cap\mathbb{Z}\supseteq p\mathbb{Z}\}$ be the monoid of all invertible $p\mathbb{Z}+f\omega\mathbb{Z}$-primary ideals of $\mathcal{O}_f$ (including $\mathcal{O}_f$). Let $\mathcal{A}(\mathcal{I}^*(\mathcal{O}_f))=\{I\in\mathcal{I}^*(\mathcal{O}_f)\setminus\{\mathcal{O}_f\}:I$ is not a product of two proper ideals of $\mathcal{O}_f\}$ be the set of atoms of $\mathcal{I}^*(\mathcal{O}_f)$. Let $\mathcal{A}(\mathcal{I}_p^*(\mathcal{O}_f))=\mathcal{A}(\mathcal{I}^*(\mathcal{O}_f))\cap\mathcal{I}_p^*(\mathcal{O}_f)$.

We will use the fact that the class numbers $|{\rm Pic}(\mathcal{O}_f)|$ and $|{\rm Pic}(\mathcal{O}_K)|$ are connected by the formula

\[
|{\rm Pic}(\mathcal{O}_f)|=|{\rm Pic}(\mathcal{O}_K)|\frac{f}{(\mathcal{O}_K^{\times}:\mathcal{O}_f^{\times})}\prod_{p\in\mathbb{P},p\mid f}\left(1-\pmb{\Big(}\frac{\mathsf{d}_K}{p}\pmb{\Big)}\frac{1}{p}\right)
\]
without further mention. For a proof of the aforementioned equation we refer to \cite[Theorem 5.9.7.4]{HK13a}.

\medskip
\textit{Throughout this paper, let $d\in\mathbb{N}_{\geq 2}$ be squarefree. If not otherwise stated, then $K=\mathbb{Q}(\sqrt{d})$, $\mathcal{O}_K$, $\mathsf{d}_K$, $\varepsilon=\varepsilon_K$, $\omega$ and $\mathcal{O}_f$ for each $f\in\mathbb{N}$ are defined with respect to this fixed $d$.}

\section{Necessary conditions and characterizations}\label{2}

The main purpose of this section is to derive various necessary conditions for an order $\mathcal{O}$ (in a real quadratic number field) to satisfy $\min\Delta(\mathcal{O})>1$. Clearly, such an order cannot be half-factorial and the next result shows that it cannot even be transfer Krull. Furthermore, we present three new characterizations of orders $\mathcal{O}$ with $\min\Delta(\mathcal{O})>1$. Let $f\in\mathbb{N}$ be such that $\min\Delta(\mathcal{O}_f)>1$ and let $g\in\mathbb{N}$. As a byproduct of our investigations, we determine when $\Delta(\mathcal{O}_g)$ is unusual, where $g$ is either a divisor of $f$ or a specific multiple of $f$.

\begin{proposition}\label{proposition 2.1}
Let $f\in\mathbb{N}$ and let $\min\Delta(\mathcal{O}_f)>1$. Then $\mathcal{O}_f$ is not transfer Krull.
\end{proposition}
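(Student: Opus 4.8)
The plan is to argue by contradiction, transporting the arithmetic of $\mathcal{O}_f$ to a monoid of zero-sum sequences over a finite abelian group. Assume that $\mathcal{O}_f$ is transfer Krull, and let $\theta$ be a transfer homomorphism from the monoid of nonzero elements of $\mathcal{O}_f$ into a Krull monoid $H$. Since $\min\Delta(\mathcal{O}_f)>1$ we have $\Delta(\mathcal{O}_f)\not=\emptyset$, so $\mathcal{O}_f$ is not half-factorial. First I would use the defining property of a transfer homomorphism, namely that it preserves sets of lengths: $\mathsf{L}(a)=\mathsf{L}(\theta(a))$ for every nonzero nonunit $a\in\mathcal{O}_f$. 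Consequently $\Delta(\mathcal{O}_f)=\Delta(H)$, so $\min\Delta(H)=\min\Delta(\mathcal{O}_f)>1$ while $H$ is not half-factorial.

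Next I would replace $H$ by the combinatorially accessible monoid $\mathcal{B}(G_0)$ of zero-sum sequences over the set $G_0$ of classes of the divisor class group $G$ of $H$ that contain prime divisors; the associated block homomorphism $H\to\mathcal{B}(G_0)$ is again a transfer homomorphism (see \cite{Ge-HK06a}), whence $\Delta(\mathcal{O}_f)=\Delta(\mathcal{B}(G_0))$. The decisive feature is that, as $\mathcal{O}_f$ is an order in a number field, the group $G$ is finite. The goal is then to conclude $\min\Delta(\mathcal{B}(G_0))=1$, which together with the previous step contradicts $\min\Delta(\mathcal{O}_f)>1$ and finishes the proof.

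The hard part is this last step, and it cannot be handled by a blanket statement about zero-sum monoids over finite groups: although $\min\Delta(\mathcal{B}(G))=1$ holds for the full group whenever $|G|\geq 3$ (and $\mathcal{B}(G)$ is half-factorial for $|G|\leq 2$), the analogous claim fails for proper subsets $G_0\subsetneq G$. For instance, over $G=\mathbb{Z}/4\mathbb{Z}$ with $G_0=\{1,3\}$ the only atoms are $1\cdot 3$, $1^4$ and $3^4$; every set of lengths is an arithmetic progression with difference $2$, so $\min\Delta(\mathcal{B}(G_0))=2$ even though $\mathcal{B}(G_0)$ is not half-factorial. What rescues the argument is the rigidity of the pair $(G,G_0)$ coming from a transfer Krull \emph{order}: I would invoke the structural description of transfer Krull orders in number fields to the effect that a transfer Krull order which is not half-factorial is already seminormal (for a principal order one transfers to $\mathcal{B}(G)$ over the full Picard group, where every class contains a prime). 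By \cite[Theorem 1.1]{Ge-Zh16a} such an order satisfies $\min\Delta=1$, which is the required contradiction. Pinning down this rigidity, i.e. ruling out the pathological subsets $G_0$ illustrated above, is where the real work lies; once it is available, the conclusion is immediate.
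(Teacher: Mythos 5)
There is a genuine gap, and you have in fact located it yourself: the entire content of the proposition is concentrated in the assertion that ``a transfer Krull order which is not half-factorial is already seminormal,'' which you invoke but do not prove and for which you cite no reference. Your preliminary reductions are fine (a transfer homomorphism preserves sets of lengths, one may pass to the block monoid $\mathcal{B}(G_0)$, and your example $G_0=\{1,3\}\subseteq\mathbb{Z}/4\mathbb{Z}$ correctly shows that $\min\Delta(\mathcal{B}(G_0))>1$ is possible for proper subsets $G_0$, so no blanket argument works). But after these reductions you are left with exactly the original problem: ruling out that the arithmetic of $\mathcal{O}_f$ is modelled by such a pathological pair $(G,G_0)$. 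The seminormality claim you want is not a standard citable result in the form you state it, and since you concede that ``pinning down this rigidity \ldots is where the real work lies,'' the proposal does not constitute a proof.

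For comparison, the paper does not go through seminormality or block monoids at all. It extracts from \cite[Theorem 4.14]{Br-Ge-Re20} two consequences of $\min\Delta(\mathcal{O}_f)>1$: first, $f$ is not divisible by a split prime, so the contraction map ${\rm spec}(\mathcal{O}_K)\to{\rm spec}(\mathcal{O}_f)$ is bijective and hence $\mathcal{O}_f\subseteq\mathcal{O}_K$ is a root extension by \cite[Corollary 3.7.2]{Ge-HK06a}; second, $|{\rm Pic}(\mathcal{O}_K)|\leq 2$, so $\mathcal{O}_K$ is half-factorial. With these two facts in hand, \cite[Theorem 4.2]{Ba-Re22} applies and says that such a domain is transfer Krull only if it is half-factorial, which $\mathcal{O}_f$ is not. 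If you want to salvage your approach, you would need to supply a proof of your seminormality claim (or of the rigidity of $(G,G_0)$) in this specific situation, and the natural inputs for doing so are precisely the root-extension and class-number facts that the paper uses; at that point you would essentially be reproving \cite[Theorem 4.2]{Ba-Re22}.
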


\begin{proof}
Let $\psi:{\rm spec}(\mathcal{O}_K)\rightarrow {\rm spec}(\mathcal{O}_f)$ be defined by $\psi(Q)=Q\cap\mathcal{O}_f$ for each $Q\in {\rm spec}(\mathcal{O}_K)$. It follows from \cite[Theorem 4.14]{Br-Ge-Re20} that $f$ is not divisible by a split prime, and hence $\psi$ is bijective. Therefore, $\mathcal{O}_f\subseteq\mathcal{O}_K$ is a root extension (i.e., for each $x\in\mathcal{O}_K$, there is some $k\in\mathbb{N}$ with $x^k\in\mathcal{O}_f$) by \cite[Corollary 3.7.2]{Ge-HK06a}. By \cite[Theorem 4.14]{Br-Ge-Re20} and \cite[Theorem 2.6.7]{Ge-HK06a}, we have $|{\rm Pic}(\mathcal{O}_K)|\leq 2$, and thus $\mathcal{O}_K$ is half-factorial by \cite[Theorem 1.7.3.6]{Ge-HK06a}. Since $\mathcal{O}_f$ is not half-factorial, it follows from \cite[Theorem 4.2]{Ba-Re22} that $\mathcal{O}_f$ is not transfer Krull.
\end{proof}

\begin{lemma}\label{lemma 2.2}
Let $t=|\{p\in\mathbb{P}:p\mid\mathsf{d}_K\}|$. Then $|{\rm Pic}(\mathcal{O}_K)|\geq 2^{t-2}$ and if ${\rm N}(\varepsilon)=-1$, then $|{\rm Pic}(\mathcal{O}_K)|\geq 2^{t-1}$.
\end{lemma}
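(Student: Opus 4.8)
The plan is to derive this from genus theory together with the classical comparison between the ordinary and the narrow ideal class group of $K$. Write $\mathrm{Cl}(K)=\mathrm{Pic}(\mathcal{O}_K)$ for the ordinary class group and $\mathrm{Cl}^+(K)$ for the narrow class group (fractional ideals modulo principal ideals admitting a totally positive generator). There is a canonical surjection $\mathrm{Cl}^+(K)\to\mathrm{Cl}(K)$, whence $|\mathrm{Cl}^+(K)|=(\mathrm{Cl}^+(K):\mathrm{Cl}(K))\cdot|\mathrm{Pic}(\mathcal{O}_K)|$. I would first bound $|\mathrm{Cl}^+(K)|$ from below and then control the index $(\mathrm{Cl}^+(K):\mathrm{Cl}(K))\in\{1,2\}$, the latter being the only place where the sign ${\rm N}(\varepsilon)$ enters.

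Since a prime $p$ ramifies in $\mathcal{O}_K$ precisely when $p\mid\mathsf{d}_K$, the integer $t$ counts the ramified primes, and genus theory supplies the lower bound. Writing $\mathsf{d}_K=d_1\cdots d_t$ as a product of prime discriminants, the genus field of $K$ is $\mathbb{Q}(\sqrt{d_1},\dots,\sqrt{d_t})$, of degree $2^t$ over $\mathbb{Q}$ and hence of degree $2^{t-1}$ over $K$; as this field corresponds to $\mathrm{Cl}^+(K)/\mathrm{Cl}^+(K)^2$, the $2$-rank of $\mathrm{Cl}^+(K)$ equals $t-1$, so that $|\mathrm{Cl}^+(K)|\geq 2^{t-1}$.

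The remaining and only delicate point is to read off the index from ${\rm N}(\varepsilon)$; this I expect to be the main (though routine) obstacle. The kernel of $\mathrm{Cl}^+(K)\to\mathrm{Cl}(K)$ is isomorphic to $\{\pm1\}^2/U$, where $U=\mathrm{sgn}(\mathcal{O}_K^{\times})$ is the image of the sign homomorphism $\mathrm{sgn}\colon\mathcal{O}_K^{\times}\to\{\pm1\}^2$ recording the signs under the two real embeddings. As $-1\in\mathcal{O}_K^{\times}$ gives the pattern $(-1,-1)$ and $\mathcal{O}_K^{\times}=\langle-1,\varepsilon\rangle$, the group $U$ is generated by $(-1,-1)$ and $\mathrm{sgn}(\varepsilon)$. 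If ${\rm N}(\varepsilon)=-1$, then the two conjugates of $\varepsilon$ have opposite signs, so $\mathrm{sgn}(\varepsilon)$ together with $(-1,-1)$ generates all of $\{\pm1\}^2$; hence the index is $1$ and $|\mathrm{Pic}(\mathcal{O}_K)|=|\mathrm{Cl}^+(K)|\geq 2^{t-1}$. If ${\rm N}(\varepsilon)=1$, then both conjugates of $\varepsilon$ are positive (recall $\varepsilon>1$), so $U=\{(1,1),(-1,-1)\}$ has order $2$, the index equals $2$, and $|\mathrm{Pic}(\mathcal{O}_K)|=\tfrac12|\mathrm{Cl}^+(K)|\geq 2^{t-2}$. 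In both cases the asserted bounds follow.
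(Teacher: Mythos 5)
Your proof is correct. The paper itself gives no argument here: it simply cites three theorems from Halter-Koch's \emph{Quadratic Irrationals} \cite{HK13a}, which package exactly the two facts you prove, namely that the $2$-rank of the narrow class group $\mathrm{Cl}^+(K)$ equals $t-1$ (genus theory) and that the kernel of $\mathrm{Cl}^+(K)\twoheadrightarrow\mathrm{Pic}(\mathcal{O}_K)$ has order $1$ or $2$ according as ${\rm N}(\varepsilon)=-1$ or ${\rm N}(\varepsilon)=1$. So your writeup is not a different route but a self-contained expansion of the citation, and all the steps check out: the factorization of $\mathsf{d}_K$ into $t$ prime discriminants gives the genus field of degree $2^{t-1}$ over $K$, hence $|\mathrm{Cl}^+(K)|\geq 2^{t-1}$; the kernel of the surjection is $\{\pm1\}^2/\mathrm{sgn}(\mathcal{O}_K^\times)$ with $\mathrm{sgn}(\mathcal{O}_K^\times)$ generated by $(-1,-1)$ and $\mathrm{sgn}(\varepsilon)$; and the observation that ${\rm N}(\varepsilon)=1$ together with $\varepsilon>1$ forces $\mathrm{sgn}(\varepsilon)=(1,1)$ is the one small point that needs saying and you say it. The only value judgement to add is that for a paper one would normally keep the citation rather than reprove classical genus theory, but as a verification of the lemma your argument is complete.
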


\begin{proof}
This follows from \cite[Theorems 1.3.10, 6.1.7.1 and 6.5.2]{HK13a}.
\end{proof}

\begin{lemma}\label{lemma 2.3}
Let $p\in\mathbb{P}\setminus\{2\}$ be such that $p\nmid\mathsf{d}_K$. If $|{\rm Pic}(\mathcal{O}_p)|=|{\rm Pic}(\mathcal{O}_K)|$, then ${\rm N}(\varepsilon)=-1$ and $p\equiv 3\mod 4$.
\end{lemma}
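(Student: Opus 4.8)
The plan is to analyze the class-number formula connecting $|\mathrm{Pic}(\mathcal{O}_p)|$ and $|\mathrm{Pic}(\mathcal{O}_K)|$ under the hypotheses $p \nmid \mathsf{d}_K$ and $p \neq 2$. Since $f = p$ is prime and $p \nmid \mathsf{d}_K$, the prime $p$ is either inert or split in $\mathcal{O}_K$, so the Euler factor in the formula is $1 - \left(\frac{\mathsf{d}_K}{p}\right)\frac{1}{p} = \frac{p \mp 1}{p}$ according as $p$ is split or inert. Substituting $f = p$ gives
\[
|\mathrm{Pic}(\mathcal{O}_p)| = |\mathrm{Pic}(\mathcal{O}_K)| \frac{p}{(\mathcal{O}_K^{\times}:\mathcal{O}_p^{\times})}\left(1 - \pmb{\Big(}\tfrac{\mathsf{d}_K}{p}\pmb{\Big)}\tfrac{1}{p}\right) = |\mathrm{Pic}(\mathcal{O}_K)| \frac{p - \left(\frac{\mathsf{d}_K}{p}\right)}{(\mathcal{O}_K^{\times}:\mathcal{O}_p^{\times})}.
\]
The hypothesis $|\mathrm{Pic}(\mathcal{O}_p)| = |\mathrm{Pic}(\mathcal{O}_K)|$ then forces the arithmetic constraint $(\mathcal{O}_K^{\times}:\mathcal{O}_p^{\times}) = p - \left(\frac{\mathsf{d}_K}{p}\right)$.

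The core of the argument is that the unit index $(\mathcal{O}_K^{\times}:\mathcal{O}_p^{\times})$ is small, typically bounded by something like $p+1$, but in fact must divide the order of the multiplicative group one quotients by. First I would recall that $\mathcal{O}_p^{\times} = \{u \in \mathcal{O}_K^{\times} : u \equiv \text{(rational integer)} \bmod p\mathcal{O}_K\}$ once one accounts for the structure $\mathcal{O}_p = \mathbb{Z} + p\mathcal{O}_K$; more precisely, a unit $u = \varepsilon^n$ (up to sign) lies in $\mathcal{O}_p^{\times}$ exactly when its image in $(\mathcal{O}_K/p\mathcal{O}_K)^{\times}/(\mathbb{Z}/p\mathbb{Z})^{\times}$ is trivial. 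Since $\mathcal{O}_K^{\times}/\{\pm 1\}$ is infinite cyclic generated by $\varepsilon$, the index $(\mathcal{O}_K^{\times}:\mathcal{O}_p^{\times})$ equals the order of the image of $\varepsilon$ in this finite quotient group. The two cases split according to whether $p$ is split or inert: when $p$ is split, $\mathcal{O}_K/p\mathcal{O}_K \cong \mathbb{F}_p \times \mathbb{F}_p$ and the relevant quotient group has order $p-1$; when $p$ is inert, $\mathcal{O}_K/p\mathcal{O}_K \cong \mathbb{F}_{p^2}$ and the quotient has order $p+1$.

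Combining the index formula with the class-number equality, in the split case we would need $(\mathcal{O}_K^{\times}:\mathcal{O}_p^{\times}) = p-1$, which is the full order of $\mathbb{F}_p^{\times} \times \mathbb{F}_p^{\times}$ modulo the diagonal — but this index divides $p-1$ and can equal $p-1$ only in exceptional circumstances that I would show are incompatible with the class-number equality, ultimately ruling out the split case entirely and forcing $p$ inert, i.e. $\left(\frac{\mathsf{d}_K}{p}\right) = -1$, giving $(\mathcal{O}_K^{\times}:\mathcal{O}_p^{\times}) = p+1$. In the inert case the index must then equal the full group order $p+1$, which means $\varepsilon$ generates the cyclic quotient $\mathbb{F}_{p^2}^{\times}/\mathbb{F}_p^{\times}$ of order $p+1$; I would then show that the norm condition enters precisely here. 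The key point is that the image of $\varepsilon$ in $\mathbb{F}_{p^2}^{\times}/\mathbb{F}_p^{\times}$ has order equal to $p+1$ (a generator) only if $\mathrm{N}(\varepsilon) \equiv$ a nonsquare structure modulo $p$ works out, and tracking the relative norm $\mathbb{F}_{p^2} \to \mathbb{F}_p$ forces $\mathrm{N}(\varepsilon) = -1$ together with the congruence $p \equiv 3 \bmod 4$ via the Legendre symbol $\left(\frac{-1}{p}\right) = -1$.

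The main obstacle will be the unit-index computation: correctly identifying $(\mathcal{O}_K^{\times}:\mathcal{O}_p^{\times})$ with the order of $\varepsilon$ in the appropriate finite quotient of $(\mathcal{O}_K/p\mathcal{O}_K)^{\times}$, and then extracting from the requirement that this order be maximal (namely $p+1$) the precise arithmetic conditions $\mathrm{N}(\varepsilon) = -1$ and $p \equiv 3 \bmod 4$. The delicate part is linking the norm of the fundamental unit to the Frobenius action on $\mathbb{F}_{p^2}$: since $\mathrm{N}(\varepsilon) = \varepsilon \cdot \bar{\varepsilon}$ reduces to $\varepsilon^{1+p}$ in $\mathbb{F}_{p^2}^{\times}$, the condition that $\varepsilon$ has order exactly $p+1$ in the quotient pins down $\varepsilon^{p+1} = \mathrm{N}(\varepsilon)$ modulo the constraints, and one must verify that $\mathrm{N}(\varepsilon) = +1$ would force the order to divide $(p+1)/2$ or otherwise contradict the equality $(\mathcal{O}_K^{\times}:\mathcal{O}_p^{\times}) = p+1$, thereby leaving $\mathrm{N}(\varepsilon) = -1$ as the only possibility and simultaneously yielding $p \equiv 3 \bmod 4$.
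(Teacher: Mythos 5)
Your strategy is genuinely different from the paper's, which disposes of this lemma by citing Chakraborty--Saikia \cite{Ch-Sa14}; a self-contained argument along your lines (the class-number formula reduces the hypothesis to $(\mathcal{O}_K^{\times}:\mathcal{O}_p^{\times})=p-\pmb{\Big(}\frac{\mathsf{d}_K}{p}\pmb{\Big)}$, and that index is the order of $\varepsilon$ in $(\mathcal{O}_K/p\mathcal{O}_K)^{\times}/(\mathbb{Z}/p\mathbb{Z})^{\times}$) is feasible, and your treatment of the inert case is essentially right, though the deduction of $p\equiv 3\bmod 4$ there is only gestured at.

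However, there is a genuine error in your handling of the split case: you plan to show the index can never reach $p-1$ when $p$ splits, ``ruling out the split case entirely.'' This cannot work. Take $d=2$ and $p=7$, which splits in $\mathbb{Q}(\sqrt{2})$; with $\varepsilon=1+\sqrt{2}$ the coefficients of $\sqrt{2}$ in $\varepsilon,\varepsilon^2,\dots,\varepsilon^6$ are $1,2,5,12,29,70$, so $(\mathcal{O}_K^{\times}:\mathcal{O}_7^{\times})=6=p-1$ and $|{\rm Pic}(\mathcal{O}_7)|=|{\rm Pic}(\mathcal{O}_K)|=1$. The split case therefore genuinely occurs under the lemma's hypothesis and must be argued, not excluded. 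The repair stays within your framework: writing $\bar{\varepsilon}=(\varepsilon_1,\varepsilon_2)\in\mathbb{F}_p^{\times}\times\mathbb{F}_p^{\times}$, its class modulo the diagonal $\mathbb{F}_p^{\times}$ is that of $(1,\varepsilon_2\varepsilon_1^{-1})$, and $\varepsilon_2\varepsilon_1^{-1}={\rm N}(\varepsilon)^{-1}\varepsilon_2^{2}=\pm\varepsilon_2^{2}$; for this to generate a cyclic group of order $p-1$ it must be a nonsquare in $\mathbb{F}_p^{\times}$, which fails if ${\rm N}(\varepsilon)=1$ (it is then a square) and also fails if $p\equiv 1\bmod 4$ (then $-1$ is a square, so $-\varepsilon_2^{2}$ is again a square); this gives the conclusion in the split case as well. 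Finally, in the inert case you should actually carry out the last step: with $\bar{\varepsilon}=g^{k}$ for a generator $g$ of $\mathbb{F}_{p^2}^{\times}$, the relation $\bar{\varepsilon}^{\,p+1}={\rm N}(\varepsilon)=-1$ gives $k\equiv (p-1)/2\bmod (p-1)$, and generating the order-$(p+1)$ quotient forces $\gcd(k,p+1)=1$, hence $k$ odd, hence $(p-1)/2$ odd, i.e.\ $p\equiv 3\bmod 4$.
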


\begin{proof}
This follows from \cite[Proposition 2.1 and Corollary 3.3]{Ch-Sa14}.
\end{proof}

\begin{lemma}\label{lemma 2.4}
Let $p\in\mathbb{P}$ be such that $p\equiv 1\mod 4$ and $d=2p$. Then $|{\rm Pic}(\mathcal{O}_K)|>1$.
\end{lemma}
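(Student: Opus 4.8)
The plan is to use genus theory for $K=\mathbb{Q}(\sqrt{2p})$. First I would record the relevant invariants. Since $d=2p$ with $p$ odd we have $d\equiv 2\pmod 4$, hence $\omega=\sqrt d$, $\mathcal{O}_K=\mathbb{Z}[\sqrt{2p}]$ and $\mathsf{d}_K=8p$. In particular exactly two rational primes, namely $2$ and $p$, divide $\mathsf{d}_K$, so in the notation of Lemma~\ref{lemma 2.2} we have $t=2$. The two ramified primes $\mathfrak{p}_2=2\mathbb{Z}+\sqrt d\,\mathbb{Z}$ and $\mathfrak{p}_p=p\mathbb{Z}+\sqrt d\,\mathbb{Z}$ satisfy $\mathfrak{p}_2^2=2\mathcal{O}_K$, $\mathfrak{p}_p^2=p\mathcal{O}_K$ and $\mathfrak{p}_2\mathfrak{p}_p=\sqrt d\,\mathcal{O}_K$, so that $[\mathfrak{p}_2]=[\mathfrak{p}_p]$ is a class of order at most $2$ in ${\rm Pic}(\mathcal{O}_K)$; these identities will be useful for a cross-check below.

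The crucial point is to produce an everywhere unramified quadratic extension of $K$. I would factor $\mathsf{d}_K=8p$ into prime discriminants as $\mathsf{d}_K=8\cdot p$, noting that both factors are positive because $p\equiv 1\pmod 4$ forces the prime discriminant attached to $p$ to equal $+p$. Genus theory (in the form underlying Lemma~\ref{lemma 2.2}) then shows that the genus field $G=\mathbb{Q}(\sqrt 2,\sqrt p)$ is unramified over $K$ at all finite primes and satisfies $[G:K]=2^{t-1}=2$. Since both prime discriminants are positive, $G$ is totally real, so $G/K$ is unramified at the infinite places as well. Hence $G$ is an everywhere unramified abelian extension of $K$ of degree $2$ and is therefore contained in the Hilbert class field of $K$, whose degree over $K$ equals $|{\rm Pic}(\mathcal{O}_K)|$. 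Consequently $2=[G:K]$ divides $|{\rm Pic}(\mathcal{O}_K)|$, which yields $|{\rm Pic}(\mathcal{O}_K)|>1$.

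The step I expect to be the main obstacle is the passage from the narrow to the ordinary class group, that is, the verification that $G$ is unramified at the infinite places and hence lies in the ordinary rather than merely the narrow Hilbert class field. This is exactly the difficulty of the case ${\rm N}(\varepsilon)=1$: Lemma~\ref{lemma 2.2} already gives $|{\rm Pic}(\mathcal{O}_K)|\geq 2^{t-1}=2$ when ${\rm N}(\varepsilon)=-1$, so the real content lies in the case ${\rm N}(\varepsilon)=1$, where narrow genus theory only provides a $2$-torsion class that could become trivial in ${\rm Pic}(\mathcal{O}_K)$. The positivity of both prime discriminants, coming from $p\equiv 1\pmod 4$, is precisely what rescues the argument, so I would take care to isolate that point. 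As an independent check on the conclusion, when $p\equiv 5\pmod 8$ neither $2$ nor $-2$ is a quadratic residue modulo $p$, so no element of $\mathcal{O}_K$ has norm $\pm 2$; thus $\mathfrak{p}_2$ is nonprincipal and $[\mathfrak{p}_2]=[\mathfrak{p}_p]$ is already a nontrivial class, recovering $|{\rm Pic}(\mathcal{O}_K)|>1$ directly in that subcase.
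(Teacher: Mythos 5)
Your argument is correct. The paper disposes of this lemma with a one-line citation to Brown's Theorem 1 in \cite{Br74}, which is a parity statement for class numbers of real quadratic fields $\mathbb{Q}(\sqrt{2p})$; you instead give the genus-theoretic proof that underlies such results. Your key steps all check out: $\mathsf{d}_K=8p$ factors into the two positive prime discriminants $8$ and $p$ (positivity of the second using $p\equiv 1\bmod 4$), the genus field $G=\mathbb{Q}(\sqrt{2},\sqrt{p})$ is unramified over $K$ at all finite primes and has degree $2^{t-1}=2$ over $K$, and total reality of $G$ puts it inside the wide (not merely narrow) Hilbert class field, so $2$ divides $|{\rm Pic}(\mathcal{O}_K)|$. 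You correctly identify the narrow-versus-wide issue as the only real content in the case ${\rm N}(\varepsilon)=1$, and your supplementary check for $p\equiv 5\bmod 8$ (no element of norm $\pm 2$, so $2\mathbb{Z}+\sqrt{d}\,\mathbb{Z}$ is nonprincipal) is also sound. The trade-off is that your route invokes class field theory (Hilbert class field, degree equals class number), whereas Brown's theorem is established by elementary quadratic-form and Legendre-symbol computations; the paper's citation keeps the lemma's proof at the same elementary level as the rest of Section 2, while yours is self-contained given standard genus theory.
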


\begin{proof}
This is an immediate consequence of \cite[Theorem 1]{Br74}.
\end{proof}

\begin{lemma}\label{lemma 2.5}
Let $f,g\in\mathbb{N}$ and $p\in\mathbb{P}$ be such that $p\mid g\mid f$ and ${\rm v}_p(g)={\rm v}_p(f)$. Then $\psi:\mathcal{I}^*_p(\mathcal{O}_f)\rightarrow\mathcal{I}^*_p(\mathcal{O}_g)$ defined by $\psi(A)=A\mathcal{O}_g$ for each $A\in\mathcal{I}^*_p(\mathcal{O}_f)$ is a monoid isomorphism.
\end{lemma}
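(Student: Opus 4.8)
The plan is to pass to the localization at $p$, where the hypothesis ${\rm v}_p(f)={\rm v}_p(g)$ forces $\mathcal{O}_f$ and $\mathcal{O}_g$ to coincide, and then to identify both $\mathcal{I}^*_p(\mathcal{O}_f)$ and $\mathcal{I}^*_p(\mathcal{O}_g)$ with one and the same monoid of invertible primary ideals of that common localization. First I would record the easy structural facts. Since $g\mid f$ we have $f\omega\mathbb{Z}\subseteq g\omega\mathbb{Z}$, hence $\mathcal{O}_f\subseteq\mathcal{O}_g$, so $\psi$ makes sense; it is multiplicative because $(A\mathcal{O}_g)(B\mathcal{O}_g)=AB\mathcal{O}_g$, and it sends the identity $\mathcal{O}_f$ to $\mathcal{O}_g$. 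It really maps into $\mathcal{I}^*_p(\mathcal{O}_g)$: the extension of an invertible ideal along a ring extension is again invertible, and $p\in\sqrt{A}$ implies $p\in\sqrt{A\mathcal{O}_g}$, so the primarity condition over $p$ is preserved. Thus $\psi$ is a monoid homomorphism and only bijectivity remains.

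Next I would set $T=\mathbb{Z}\setminus p\mathbb{Z}$, write $\mathbb{Z}_{(p)}=T^{-1}\mathbb{Z}$, and put $e={\rm v}_p(f)={\rm v}_p(g)$. Since $f\mathbb{Z}_{(p)}=p^e\mathbb{Z}_{(p)}=g\mathbb{Z}_{(p)}$, localizing gives $T^{-1}\mathcal{O}_f=\mathbb{Z}_{(p)}+p^e\omega\mathbb{Z}_{(p)}=T^{-1}\mathcal{O}_g=:R$; this equality of overrings is the heart of the matter. Because $p\mid f$ and $p\mid g$, the ideals $P_f=p\mathbb{Z}+f\omega\mathbb{Z}$ and $P_g=p\mathbb{Z}+g\omega\mathbb{Z}$ are the unique maximal ideals of $\mathcal{O}_f$, respectively $\mathcal{O}_g$, lying over $p$, so $R$ is local with $R=(\mathcal{O}_f)_{P_f}=(\mathcal{O}_g)_{P_g}$ and maximal ideal $\mathfrak{m}=T^{-1}P_f=T^{-1}P_g$.

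I would then invoke the standard localization correspondence for a one-dimensional Noetherian domain $D$ with maximal ideal $P$: the map $A\mapsto T^{-1}A$ is a monoid isomorphism from the invertible $P$-primary ideals of $D$ (including $D$) onto the invertible $\mathfrak{m}$-primary ideals of $R=D_P$ (including $R$), with inverse $\mathfrak{a}\mapsto\mathfrak{a}\cap D$. Injectivity is immediate from $A=T^{-1}A\cap D$, which holds because an invertible $P$-primary ideal satisfies $A_Q=D_Q$ for every maximal $Q\neq P$ together with $A=\bigcap_Q A_Q$. Applying this to $\mathcal{O}_f$ and to $\mathcal{O}_g$ produces two monoid isomorphisms $\lambda_f,\lambda_g$ onto the same target monoid, namely the invertible $\mathfrak{m}$-primary ideals of the one ring $R$. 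Since $T^{-1}(\psi(A))=(T^{-1}A)R=T^{-1}A$, the diagram commutes, i.e. $\lambda_g\circ\psi=\lambda_f$, and therefore $\psi=\lambda_g^{-1}\circ\lambda_f$ is a monoid isomorphism, as claimed.

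The main obstacle is the surjectivity half of the localization correspondence: given an invertible $\mathfrak{m}$-primary ideal $\mathfrak{a}$ of $R$, one must check that the contraction $\mathfrak{a}\cap D$ is again $P$-primary (contraction of a primary ideal is primary), is invertible, and localizes back to $\mathfrak{a}$. Invertibility is the delicate point; I would establish it by showing that $\mathfrak{a}\cap D$ is locally principal at every maximal ideal — the unit ideal away from $P$ by primarity, and principal at $P$ because its localization is the principal ideal $\mathfrak{a}$ of the local ring $R$ — and then using that a finitely generated, locally principal ideal of a Noetherian domain is invertible. The recovery $T^{-1}(\mathfrak{a}\cap D)=\mathfrak{a}$ then follows because localization commutes with finite intersection. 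Everything else is routine bookkeeping.
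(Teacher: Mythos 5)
Your proposal is correct and takes essentially the same route as the paper: both arguments reduce everything to the identity $(\mathcal{O}_f)_{P}=(\mathcal{O}_g)_{Q}$ of the localizations at the unique maximal ideals over $p$ (which you prove directly from ${\rm v}_p(f)={\rm v}_p(g)$, while the paper cites it from an earlier reference) and then use the standard contraction/extension correspondence for invertible primary ideals, including the same ``locally principal plus Noetherian implies invertible'' step for surjectivity. The only difference is organizational: you factor $\psi$ through the monoid of invertible $\mathfrak{m}$-primary ideals of the common local ring, whereas the paper verifies injectivity and surjectivity of $\psi$ directly.
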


\begin{proof}
Let $P$ be the unique maximal ideal of $\mathcal{O}_f$ with $P\cap\mathbb{Z}=p\mathbb{Z}$ and let $Q$ be the unique maximal ideal of $\mathcal{O}_g$ with $Q\cap\mathbb{Z}=p\mathbb{Z}$. Then $Q\cap\mathcal{O}_f=P$. If $A\in\mathcal{I}^*_p(\mathcal{O}_f)$, then $A$ is an invertible ideal of $\mathcal{O}_f$ with $\sqrt{A}\supseteq P$, and thus $A\mathcal{O}_g$ is an invertible ideal of $\mathcal{O}_g$ and $\sqrt{A\mathcal{O}_g}\supseteq Q$. Consequently, $\psi$ is a well-defined map. Since $\psi(\mathcal{O}_f)=\mathcal{O}_g$ and $\psi(IJ)=IJ\mathcal{O}_g=I\mathcal{O}_gJ\mathcal{O}_g=\psi(I)\psi(J)$ for all $I,J\in\mathcal{I}^*_p(\mathcal{O}_f)$, we see that $\psi$ is a monoid homomorphism. It follows from \cite[Proposition 3.3.1]{Br-Ge-Re20} that $(\mathcal{O}_f)_P=(\mathcal{O}_g)_Q$.

\smallskip
First we prove that $\psi$ is injective. Let $I,J\in\mathcal{I}^*_p(\mathcal{O}_f)$ be such that $\psi(I)=\psi(J)$. Clearly, $I$ is proper if and only if $J$ is proper. Therefore, we can assume without restriction that $I$ and $J$ are proper, and hence $I$ and $J$ are $P$-primary ideals of $\mathcal{O}_f$. Observe that $I_P=I(\mathcal{O}_f)_P=I(\mathcal{O}_g)_Q=(I\mathcal{O}_g)_Q=(J\mathcal{O}_g)_Q=J(\mathcal{O}_g)_Q=J(\mathcal{O}_f)_P=J_P$, and hence $I=I_P\cap\mathcal{O}_f=J_P\cap\mathcal{O}_f=J$.

\smallskip
Finally, we show that $\psi$ is surjective. Let $B\in\mathcal{I}^*_p(\mathcal{O}_g)$. Without restriction let $B$ be proper. Set $A=B\cap\mathcal{O}_f$. Then $\sqrt{A}=\sqrt{B}\cap\mathcal{O}_f=Q\cap\mathcal{O}_f=P$, and hence $\sqrt{A\mathcal{O}_g}=Q$. Therefore, $A$ is a $P$-primary ideal of $\mathcal{O}_f$ and $A\mathcal{O}_g$ is a $Q$-primary ideal of $\mathcal{O}_g$. We have $A_P=(B\cap\mathcal{O}_f)_P=B_P\cap (\mathcal{O}_f)_P=B(\mathcal{O}_f)_P\cap (\mathcal{O}_f)_P=B(\mathcal{O}_g)_Q\cap (\mathcal{O}_g)_Q=B_Q$, and hence $A_P\subseteq (A\mathcal{O}_g)_Q\subseteq B_Q=A_P$. Since $A\mathcal{O}_g$ and $B$ are $Q$-primary, we infer that $B=B_Q\cap\mathcal{O}_g=(A\mathcal{O}_g)_Q\cap\mathcal{O}_g=A\mathcal{O}_g$. Note that $A_P=B_Q$ is a principal ideal of $(\mathcal{O}_g)_Q$ (since $B$ is an invertible ideal of $\mathcal{O}_g$), and hence $A_P$ is a principal ideal of $(\mathcal{O}_f)_P$. This implies that $A$ is an invertible ideal of $\mathcal{O}_f$ (since $\mathcal{O}_f$ is Noetherian and $P$ is the only maximal ideal of $\mathcal{O}_f$ that contains $A$). It follows that $A\in\mathcal{I}^*_p(\mathcal{O}_f)$ and $\psi(A)=A\mathcal{O}_g=B$.
\end{proof}

\begin{theorem}\label{theorem 2.6}
Let $f,h\in\mathbb{N}$ be such that $\min\Delta(\mathcal{O}_f)>1$ and $h\mid f$.
\begin{enumerate}
\item $|{\rm Pic}(\mathcal{O}_K)|=2$.
\item $|\{p\in\mathbb{P}:p\mid f, p$ is inert$\}|\leq 2$, $|\{p\in\mathbb{P}:p\mid f, p$ is ramified$\}|\leq 3$ and $|\{p\in\mathbb{P}:p\mid f\}|\leq 4$.
\item If $h$ is not divisible by a ramified prime, then $\mathcal{O}_h$ is half-factorial. If $h$ is divisible by a ramified prime, then $\min\Delta(\mathcal{O}_h)>1$.
\end{enumerate}
\end{theorem}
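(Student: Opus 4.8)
The plan is to derive all three parts from the characterization of unusual orders in \cite[Theorem 4.14]{Br-Ge-Re20}, the class-number inequalities of Lemma~\ref{lemma 2.2}, and the local isomorphism of Lemma~\ref{lemma 2.5}. The guiding principle is that $\min\Delta(\mathcal{O}_f)>1$ is a parity phenomenon produced by ramified primes dividing the conductor and obstructed by a trivial class group; throughout I use that, by \cite[Theorem 4.14]{Br-Ge-Re20}, no split prime divides $f$, so every prime dividing $f$ is inert or ramified.

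\textbf{Part (1).} The proof of Proposition~\ref{proposition 2.1} already gives $|{\rm Pic}(\mathcal{O}_K)|\leq 2$ (via \cite[Theorem 4.14]{Br-Ge-Re20} together with \cite[Theorem 2.6.7]{Ge-HK06a}), so it remains to exclude $|{\rm Pic}(\mathcal{O}_K)|=1$. Assume this; then $\mathcal{O}_K$ is a principal ideal domain, every prime of $\mathcal{O}_K$ is principal, and the class-group contribution to the arithmetic of $\mathcal{O}_f$ disappears. I would then show that the transfer/combinatorial description of $\mathcal{O}_f$ — whose input data are ${\rm Pic}(\mathcal{O}_K)$ together with the local primary monoids $\mathcal{I}^*_p(\mathcal{O}_f)$ at the primes dividing $f$ — reduces to one whose set of distances contains $1$ as soon as it is nonempty; equivalently, one exhibits an element of $\mathcal{O}_f$ with two factorizations of lengths differing by exactly $1$, contradicting $\min\Delta(\mathcal{O}_f)>1$. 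The delicate point is to convert the triviality of ${\rm Pic}(\mathcal{O}_K)$ into such an element, i.e. to read off from \cite[Theorem 4.14]{Br-Ge-Re20} precisely why its criterion cannot be met when $|{\rm Pic}(\mathcal{O}_K)|=1$.

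\textbf{Part (2).} The ramified primes are exactly those dividing $\mathsf{d}_K$. Put $t=|\{p\in\mathbb{P}:p\mid\mathsf{d}_K\}|$. Feeding $|{\rm Pic}(\mathcal{O}_K)|=2$ from part~(1) into Lemma~\ref{lemma 2.2} yields $2\geq 2^{t-2}$, hence $t\leq 3$, and, when ${\rm N}(\varepsilon)=-1$, $2\geq 2^{t-1}$, hence $t\leq 2$; this bounds the ramified primes dividing $f$ by $3$ (by $2$ if ${\rm N}(\varepsilon)=-1$). The bounds on the inert primes and on the total number rest on a case analysis according to the sign of ${\rm N}(\varepsilon)$: keeping $\min\Delta(\mathcal{O}_f)>1$ forces the $2$-rank of the governing combinatorial group to remain small, so inert and ramified primes cannot both be numerous, and Lemma~\ref{lemma 2.3} (which says exactly when an inert prime fails to enlarge the class number) together with the class-number formula controls the count. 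I expect the inert bound together with the sharpened total bound $\leq 4$ — which needs the joint constraint rather than two separate estimates — to be the main obstacle of the whole theorem.

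\textbf{Part (3).} Since $h\mid f$, every prime dividing $h$ is inert or ramified, and whether $\mathcal{O}_h$ is half-factorial or unusual is decided prime-by-prime at the conductor, depending only on the types of the primes dividing $h$ and not on their exponents — this exponent-independence is the role of Lemma~\ref{lemma 2.5}, which identifies the local primary monoid at a prime once that prime divides the conductor. If no ramified prime divides $h$, only inert primes occur; combined with $|{\rm Pic}(\mathcal{O}_K)|=2$, which makes $\mathcal{O}_K$ half-factorial (as noted in the proof of Proposition~\ref{proposition 2.1}), the absence of a ramified contribution lets half-factoriality descend to $\mathcal{O}_h$. If a ramified prime $p$ divides $h$, then $\sqrt{p\mathcal{O}_K}$ is non-principal — as forced by $\min\Delta(\mathcal{O}_f)>1$ via \cite[Theorem 4.14]{Br-Ge-Re20} — and its class generates ${\rm Pic}(\mathcal{O}_K)$, producing the parity obstruction that gives $1\notin\Delta(\mathcal{O}_h)$; checking separately that $\mathcal{O}_h$ is not half-factorial then yields $\min\Delta(\mathcal{O}_h)>1$. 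Here the main difficulty is the transfer when ${\rm v}_p(h)<{\rm v}_p(f)$, where Lemma~\ref{lemma 2.5} does not apply directly and one must instead invoke the exponent-independence of the half-factorial-versus-unusual dichotomy supplied by \cite[Theorem 4.14]{Br-Ge-Re20}.
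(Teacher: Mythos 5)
Your proposal is a strategy outline rather than a proof, and the places where you defer the work are exactly the places where the substance of the theorem lives. The most serious gap is in part (1): the exclusion of $|{\rm Pic}(\mathcal{O}_K)|=1$ is not a formal consequence of \cite[Theorem 4.14]{Br-Ge-Re20} (that theorem's hypothesis $|{\rm Pic}(\mathcal{O}_f)|=2$ is perfectly compatible with a trivial class group of $\mathcal{O}_K$, by the class number formula). What the paper actually does is: use Lemma~\ref{lemma 2.2} to force $t\leq 2$, use the class number formula for $f$ and for the inert part $g$ of $f$ to deduce $2(\mathcal{O}_g^{\times}:\mathcal{O}_f^{\times})=\prod_{p\in\mathcal{L}}p$, hence that $2$ is ramified and $2\mid f$; then rule out $d\equiv 3\bmod 4$ by a Pell-type descent on the fundamental unit, and in the remaining cases $d=2$ and $d=2p$ carry out explicit computations with $\varepsilon^{(g+1)/2}$ resp.\ $\varepsilon^p$ (congruences modulo $8$ and modulo $p^3$, Kronecker symbol evaluations split by $p\equiv 3,7\bmod 8$) to exhibit integers $a,b$ witnessing that one of the norm-$8$ ideals $8\mathbb{Z}+f\sqrt{d}\mathbb{Z}$ or $8\mathbb{Z}+(4+f\sqrt{d})\mathbb{Z}$ is principal, contradicting \cite[Theorem 4.14]{Br-Ge-Re20}. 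Your sentence ``the delicate point is to convert the triviality of ${\rm Pic}(\mathcal{O}_K)$ into such an element'' names the gap without closing it; there is no soft transfer-theoretic shortcut here.

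The other two parts also have concrete omissions, though smaller. In part (2) the inert bound $\leq 2$ does not come from a ``$2$-rank'' consideration but from \cite[Proposition 4.15]{Br-Ge-Re20}: the product $g$ of the inert primes dividing $f$ gives a half-factorial order, which forces $g\in\{1\}\cup\mathbb{P}\cup\{2p\}$. The total bound $\leq 4$ is then a short clash between Lemma~\ref{lemma 2.2} (three ramified primes force ${\rm N}(\varepsilon)=1$) and Lemma~\ref{lemma 2.3} applied to an odd inert prime divisor $q$ of $f$ (half-factoriality of $\mathcal{O}_q$ gives $|{\rm Pic}(\mathcal{O}_q)|=|{\rm Pic}(\mathcal{O}_K)|$, forcing ${\rm N}(\varepsilon)=-1$); you point at the right lemmas but do not assemble the contradiction, and contrary to your assessment this is the easy part of the theorem, not the main obstacle. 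In part (3), your worry about ${\rm v}_p(h)<{\rm v}_p(f)$ is moot: $f$ is squarefree by \cite[Theorem 4.14]{Br-Ge-Re20}, so ${\rm v}_p(h)={\rm v}_p(f)=1$ for every $p\mid h$ and Lemma~\ref{lemma 2.5} applies directly. What is genuinely needed and missing from your sketch is the proof that the natural map ${\rm Pic}(\mathcal{O}_f)\rightarrow{\rm Pic}(\mathcal{O}_h)$, $[I]\mapsto[I\mathcal{O}_h]$, is an isomorphism (surjectivity via maximal ideals coprime to the conductor, injectivity by comparing $|{\rm Pic}(\mathcal{O}_f)|\geq|{\rm Pic}(\mathcal{O}_h)|\geq|{\rm Pic}(\mathcal{O}_K)|=2$); this is the step that lets one transfer the principality criterion ``$A$ principal iff ${\rm N}(A)=p^2$'' from $\mathcal{O}_f$ to $\mathcal{O}_h$ and then invoke \cite[Theorem 4.14]{Br-Ge-Re20} for $\mathcal{O}_h$.
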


\begin{proof}
It follows from \cite[Theorem 4.14]{Br-Ge-Re20} that $|{\rm Pic}(\mathcal{O}_f)|=2$, $f$ is squarefree, $f$ is not divisible by a split prime, $f$ is divisible by a ramified prime and for each prime divisor $p$ of $f$ and for each $A\in\mathcal{A}(\mathcal{I}^*_p(\mathcal{O}_f))$, $A$ is principal if and only if ${\rm N}(A)=p^2$. Let $\mathcal{I}$ be the set of all inert prime divisors of $f$ and let $\mathcal{L}$ be the set of all ramified prime divisors of $f$. Let $g=\prod_{p\in\mathcal{I}} p$. Then $\mathcal{O}_g$ is half-factorial and $g\in\{1\}\cup\mathbb{P}\cup\{2p:p\in\mathbb{P}\setminus\{2\}\}$ by \cite[Proposition 4.15]{Br-Ge-Re20}, and hence $|{\rm Pic}(\mathcal{O}_g)|=|{\rm Pic}(\mathcal{O}_K)|$ by \cite[Theorem 6.2]{Ge-Ka-Re15a}. Since $|{\rm Pic}(\mathcal{O}_f)|=2$, we infer that $|{\rm Pic}(\mathcal{O}_K)|\leq 2$ by \cite[Theorem 2.6.7]{Ge-HK06a}.

\bigskip
(1) Assume that $|{\rm Pic}(\mathcal{O}_K)|=1$. Let $t=|\{p\in\mathbb{P}:p\mid\mathsf{d}_K\}|$. By Lemma~\ref{lemma 2.2}, we have $t=1$ or ($t=2$ and ${\rm N}(\varepsilon)=1$). Observe that

\[
2=|{\rm Pic}(\mathcal{O}_f)|=|{\rm Pic}(\mathcal{O}_K)|\frac{f}{(\mathcal{O}_K^{\times}:\mathcal{O}_f^{\times})}\prod_{p\in\mathbb{P},p\mid f}\left(1-\pmb{\Big(}\frac{\mathsf{d}_K}{p}\pmb{\Big)}\frac{1}{p}\right)=\frac{1}{(\mathcal{O}_K^{\times}:\mathcal{O}_f^{\times})}\prod_{p\in\mathcal{I}} (p+1)\prod_{p\in\mathcal{L}} p
\]

and

\[
1=|{\rm Pic}(\mathcal{O}_g)|=|{\rm Pic}(\mathcal{O}_K)|\frac{g}{(\mathcal{O}_K^{\times}:\mathcal{O}_g^{\times})}\prod_{p\in\mathbb{P},p\mid g}\left(1-\pmb{\Big(}\frac{\mathsf{d}_K}{p}\pmb{\Big)}\frac{1}{p}\right)=\frac{1}{(\mathcal{O}_K^{\times}:\mathcal{O}_g^{\times})}\prod_{p\in\mathcal{I}} (p+1).
\]

\smallskip
Therefore, $(\mathcal{O}_K^{\times}:\mathcal{O}_g^{\times})=\prod_{p\in\mathcal{I}} (p+1)$ and $2(\mathcal{O}_K^{\times}:\mathcal{O}_f^{\times})=\prod_{p\in\mathcal{I}} (p+1)\prod_{p\in\mathcal{L}} p=(\mathcal{O}_K^{\times}:\mathcal{O}_g^{\times})\prod_{p\in\mathcal{L}} p$. It follows that $2(\mathcal{O}_g^{\times}:\mathcal{O}_f^{\times})=\prod_{p\in\mathcal{L}} p$. This implies that $2\mid f$ and $2$ is ramified. In particular, $d\equiv 2,3\mod 4$ and $g\in\{1\}\cup\mathbb{P}\setminus\{2\}$. There are some $r,s\in\mathbb{N}$ with $\varepsilon=r+s\sqrt{d}$.

\smallskip
Assume that $d\equiv 3\mod 4$. Then $t=2$, $d\in\mathbb{P}$ and ${\rm N}(\varepsilon)=1$. We infer by Lemma~\ref{lemma 2.3} that $g=1$, and thus $f\in\{2,2d\}$ and $(\mathcal{O}_K^{\times}:\mathcal{O}_f^{\times})=\frac{f}{2}\in\{1,d\}$. Moreover, $r^2-ds^2=1$ and $rs$ is even. If $(\mathcal{O}_K^{\times}:\mathcal{O}_f^{\times})=1$, then $s$ is even. If $(\mathcal{O}_K^{\times}:\mathcal{O}_f^{\times})=d$, then $\varepsilon^d\in\mathcal{O}_f$, and hence $0\equiv\sum_{i=0,i\equiv 1\mod 2}^d\binom{d}{i}r^{d-i}s^id^{\frac{i-1}{2}}\equiv s^d\mod 2$ and so $s$ is even. In any case, $s$ is even and $r$ is odd. Since $d$ is prime, there is some $\alpha\in\{-1,1\}$ such that $d\mid r+\alpha$. We have $\frac{r-\alpha}{2}\frac{r+\alpha}{2d}=(\frac{s}{2})^2$, and $\frac{r-\alpha}{2}$ and $\frac{r+\alpha}{2d}$ are coprime positive integers. Consequently, there are some $x,y\in\mathbb{N}$ with $x^2=\frac{r-\alpha}{2}$ and $y^2=\frac{r+\alpha}{2d}$. Observe that $|x^2-dy^2|=1$. Furthermore, $x<r$, which contradicts the fact that $\varepsilon$ is a fundamental unit of $\mathcal{O}_K$.

\smallskip
This implies $d\equiv 2\mod 4$. In particular, $\{A\in\mathcal{A}(\mathcal{I}^*_2(\mathcal{O}_f)):{\rm N}(A)=8\}=\{8\mathbb{Z}+f\sqrt{d}\mathbb{Z},8\mathbb{Z}+(4+f\sqrt{d})\mathbb{Z}\}$ by \cite[Theorem 3.6]{Br-Ge-Re20}, and thus $8\mathbb{Z}+f\sqrt{d}\mathbb{Z}$ and $8\mathbb{Z}+(4+f\sqrt{d})\mathbb{Z}$ are not principal. Observe that $8\mathbb{Z}+f\sqrt{d}\mathbb{Z}$ is principal if and only if there are some $a,b\in\mathbb{Z}$ such that $|8a^2-(\frac{f}{2})^2\frac{d}{2}b^2|=1$. Moreover, $8\mathbb{Z}+(4+f\sqrt{d})\mathbb{Z}$ is principal if and only if there are some $a,b\in\mathbb{Z}$ such that $|2(2a+b)^2-(\frac{f}{2})^2\frac{d}{2}b^2|=1$.

\smallskip
First let $t=1$. Then $d=2$, $f=2g$ and $\varepsilon=1+\sqrt{2}$. Since $1+\sqrt{2}\not\in\mathcal{O}_2$, it follows that $|{\rm Pic}(\mathcal{O}_2)|=|{\rm Pic}(\mathcal{O}_K)|$, and thus $g\not=1$. Therefore, $g\in\mathbb{P}\setminus\{2\}$. Since $g$ is inert, we have $\pmb{\Big(}\frac{2}{g}\pmb{\Big)}=-1$, and hence $g\equiv 3,5\mod 8$. We infer by Lemma~\ref{lemma 2.3} that $g\equiv 3\mod 4$. This implies that $g\equiv 3\mod 8$. There are some $u,v\in\mathbb{N}$ such that $(1+\sqrt{2})^{\frac{g+1}{2}}=u+v\sqrt{2}$. Observe that $u^2-2v^2={\rm N}(u+v\sqrt{2})={\rm N}((1+\sqrt{2})^{\frac{g+1}{2}})=(-1)^{\frac{g+1}{2}}=1$. Since $\pmb{\Big(}\frac{2}{g}\pmb{\Big)}=-1$, it follows that $\pmb{\Big(}\frac{2^{\frac{g-1}{2}}}{g}\pmb{\Big)}=(-1)^{\frac{g-1}{2}}=-1$, and thus $2^{\frac{g-1}{2}}\equiv -1\mod g$. There are some $u^{\prime},v^{\prime}\in\mathbb{N}$ with $(1+\sqrt{2})^g=u^{\prime}+v^{\prime}\sqrt{2}$. Note that $u^{\prime}\equiv 1\mod g$ and $v^{\prime}\equiv 2^{\frac{g-1}{2}}\equiv -1\mod g$. Therefore, $u^2+2v^2=u^{\prime}+2v^{\prime}\equiv -1\mod g$ and $2uv=u^{\prime}+v^{\prime}\equiv 0\mod g$. Consequently, $g\mid u$ or $g\mid v$. If $g\mid v$, then $u^2\equiv -1\mod g$, which contradicts the fact that $g\equiv 3\mod 8$. We infer that $g\mid u$. Furthermore, $v=\sum_{i=0,i\equiv 1\mod 2}^{\frac{g+1}{2}}\binom{\frac{g+1}{2}}{i}2^{\frac{i-1}{2}}\equiv\frac{g+1}{2}\equiv 0\mod 2$. Set $a=\frac{v}{2}$ and $b=\frac{u}{g}$. Then $a,b\in\mathbb{Z}$ and $|8a^2-(\frac{f}{2})^2\frac{d}{2}b^2|=1$. This implies that $8\mathbb{Z}+f\sqrt{d}\mathbb{Z}$ is principal, a contradiction.

\smallskip
Now let $t=2$ and ${\rm N}(\varepsilon)=1$. Then $d=2p$ for some $p\in\mathbb{P}\setminus\{2\}$. Note that $p\equiv 3\mod 4$ by Lemma~\ref{lemma 2.4}. Moreover, $g=1$ by Lemma~\ref{lemma 2.3}, and hence $f\in\{2,2p\}$. There are some $u,v\in\mathbb{N}$ such that $\varepsilon^p=u+v\sqrt{d}$. Observe that $r^2-2ps^2=1$, and thus $r$ is odd and $s$ is even. Assume that $4p\mid r+\alpha$ for some $\alpha\in\{-1,1\}$. Since $\frac{r-\alpha}{2}$ and $\frac{r+\alpha}{4p}$ are coprime positive integers and $\frac{r-\alpha}{2}\frac{r+\alpha}{4p}=(\frac{s}{2})^2$, there are some $x,y\in\mathbb{N}$ such that $x^2=\frac{r-\alpha}{2}$ and $y^2=\frac{r+\alpha}{4p}$. We have $|x^2-2py^2|=1$. Since $x<r$, this contradicts the fact that $\varepsilon$ is a fundamental unit of $\mathcal{O}_K$. Since $p$ is prime, there is some $\eta\in\{-1,1\}$ such that $p\mid r-\eta$. Note that $4\nmid r-\eta$ and $4\mid r+\eta$. Since $\frac{r+\eta}{4}$ and $\frac{r-\eta}{2p}$ are coprime positive integers and $\frac{r+\eta}{4}\frac{r-\eta}{2p}=(\frac{s}{2})^2$, there are some $x,y\in\mathbb{N}$ such that $x^2=\frac{r+\eta}{4}$ and $y^2=\frac{r-\eta}{2p}$. We infer that $2x^2-py^2=\eta$, and hence $\pmb{\Big(}\frac{2}{p}\pmb{\Big)}=\pmb{\Big(}\frac{\eta}{p}\pmb{\Big)}$.

\smallskip
Note that $u=\sum_{i=0,i\equiv 0\mod 2}^p\binom{p}{i}r^{p-i}s^i(2p)^{\frac{i}{2}}\equiv r^p+\frac{p(p-1)}{2}r^{p-2}s^22p\equiv r^p\equiv (-\eta)^p\equiv-\eta\mod 4$, and hence $4\mid u+\eta$. Furthermore, $u=\sum_{i=0,i\equiv 0\mod 2}^p\binom{p}{i}r^{p-i}s^i(2p)^{\frac{i}{2}}\equiv r^p+\frac{p(p-1)}{2}r^{p-2}s^22p\equiv r^{p-2}(r^2-p^2s^2)\mod p^3$. Set $z=r-\eta$. Then $r^2-p^2s^2=1+(2-p)\frac{r^2-1}{2}=1+(2-p)\frac{z(z+2\eta)}{2}\equiv 1+\eta (2-p)z+z^2\mod p^3$ and $r^{p-2}=\sum_{i=0}^{p-2}\binom{p-2}{i}\eta^{p-2-i}z^i\equiv\eta+(p-2)z+\eta\frac{(p-2)(p-3)}{2}z^2\equiv\eta+(p-2)z+3\eta z^2\mod p^3$. This implies that $u\equiv (\eta+(p-2)z+3\eta z^2)(1+\eta (2-p)z+z^2)\equiv\eta+(p-2)z+3\eta z^2+(2-p)z-\eta(p-2)^2z^2+\eta z^2=\eta-\eta (p^2-4p)z^2\equiv\eta\mod p^3$. We infer that $p^3\mid u-\eta$. Note that $u^2-dv^2={\rm N}(\varepsilon^p)=1$. Since $\frac{u+\eta}{4}$ and $\frac{u-\eta}{2p^3}$ are coprime positive integers and $\frac{u+\eta}{4}\frac{u-\eta}{2p^3}=(\frac{v}{2p})^2$, there are some $m,n\in\mathbb{Z}$ such that $\frac{u+\eta}{4}=m^2$ and $\frac{u-\eta}{2p^3}=n^2$. Consequently, $2m^2-p^3n^2=\eta$.

\smallskip
First let $p\equiv 3\mod 8$. Then $\pmb{\Big(}\frac{\eta}{p}\pmb{\Big)}=\pmb{\Big(}\frac{2}{p}\pmb{\Big)}=-1$, and thus $\eta=-1$ and $2m^2-p^3n^2=-1$. Clearly, $n$ is odd and if $m$ is even, then $-1\equiv -p^3n^2\equiv 5\mod 8$, a contradiction. We deduce that $m$ is odd. This implies that there are some $a,b\in\mathbb{Z}$ with $|2(2a+b)^2-(\frac{f}{2})^2\frac{d}{2}b^2|=1$, and hence $8\mathbb{Z}+(4+f\sqrt{d})\mathbb{Z}$ is principal, a contradiction.

\smallskip
Next let $p\equiv 7\mod 8$. Then $\pmb{\Big(}\frac{\eta}{p}\pmb{\Big)}=\pmb{\Big(}\frac{2}{p}\pmb{\Big)}=1$. Note that $\eta=1$ and $2m^2-p^3n^2=1$. Obviously, $n$ is odd and if $m$ is odd, then $1\equiv 2m^2-p^3n^2\equiv 3\mod 8$, a contradiction. We infer that $m$ is even. It follows that there are some $a,b\in\mathbb{Z}$ such that $|8a^2-(\frac{f}{2})^2\frac{d}{2}b^2|=1$, and thus $8\mathbb{Z}+f\sqrt{d}\mathbb{Z}$ is principal, a contradiction.

\bigskip
(2) We have already seen that $|\{p\in\mathbb{P}:p\mid f, p\textnormal{ is inert}\}|\leq 2$. Since $|{\rm Pic}(\mathcal{O}_K)|=2$ (by (1)), it follows from Lemma~\ref{lemma 2.2} that $|\{p\in\mathbb{P}:p\mid f, p\textnormal{ is ramified}\}|\leq |\{p\in\mathbb{P}:p\mid\mathsf{d}_K\}|\leq 3$. We obtain $|\{p\in\mathbb{P}:p\mid f\}|\leq 5$. Assume that $|\{p\in\mathbb{P}:p\mid f\}|=5$. Then $|\{p\in\mathbb{P}:p\mid f, p\textnormal{ is inert}\}|=2$ and $|\{p\in\mathbb{P}:p\mid\mathsf{d}_K\}|=3$. In particular, there exists some odd inert $q\in\mathbb{P}$ with $q\mid f$ and we have ${\rm N}(\varepsilon)=1$ by Lemma~\ref{lemma 2.2}. Since $q\mid g$ and $\mathcal{O}_g$ is half-factorial, we infer that $\mathcal{O}_q$ is half-factorial by \cite[Theorem 3.7.15]{Ge-HK06a}, and hence $|{\rm Pic}(\mathcal{O}_q)|=|{\rm Pic}(\mathcal{O}_K)|$ by \cite[Theorem 6.2.1]{Ge-Ka-Re15a}. Therefore, ${\rm N}(\varepsilon)=-1$ by Lemma~\ref{lemma 2.3}, a contradiction.

\bigskip
(3) First let $h$ be not divisible by a ramified prime. Then $h\mid g$, and hence $\mathcal{O}_g\subseteq\mathcal{O}_h$. Since $\mathcal{O}_g$ is half-factorial, we deduce by \cite[Theorem 3.7.15]{Ge-HK06a} that $\mathcal{O}_h$ is half-factorial. From now on let $h$ be divisible by a ramified prime.

\smallskip
Next we prove that $\varphi:{\rm Pic}(\mathcal{O}_f)\rightarrow {\rm Pic}(\mathcal{O}_h)$ defined by $\varphi([I])=[I\mathcal{O}_h]$ for each invertible ideal $I$ of $\mathcal{O}_f$ is a group isomorphism. It is straightforward to prove (and a simple consequence of the fact that $\mathcal{O}_f\subseteq\mathcal{O}_h$ are integral domains) that $\varphi$ is a well-defined group homomorphism. Now we show that $\varphi$ is surjective. Let $x\in {\rm Pic}(\mathcal{O}_h)\setminus\{[\mathcal{O}_h]\}$. By \cite[Corollary 2.11.16]{Ge-HK06a}, it follows that $x=[Q]$ for some nonprincipal invertible maximal ideal $Q$ of $\mathcal{O}_h$ with $f\not\in Q$. Set $P=Q\cap\mathcal{O}_f$. Note that $P$ is a maximal ideal of $\mathcal{O}_f$ and $f\not\in P$. Therefore, $P$ is an invertible ideal of $\mathcal{O}_f$ by \cite[Theorem 2.6.5 and Proposition 2.10.5]{Ge-HK06a}, and thus $(\mathcal{O}_f)_P=(\mathcal{O}_h)_Q=(\mathcal{O}_h)_{Q^{\prime}}$ is a Noetherian valuation domain for each maximal ideal $Q^{\prime}$ of $\mathcal{O}_h$ with $Q^{\prime}\cap\mathcal{O}_f=P$. Therefore, $Q$ is the only maximal ideal of $\mathcal{O}_h$ with $Q\cap\mathcal{O}_f=P$, and hence $P\mathcal{O}_h$ is $Q$-primary. Moreover, $P_P=(P\mathcal{O}_h)_Q=Q_Q$. This implies that $P\mathcal{O}_h=Q$, and hence $\varphi([P])=x$. It follows that $\varphi$ is surjective. We infer by (1) and \cite[Theorem 2.6.7]{Ge-HK06a} that $|{\rm Pic}(\mathcal{O}_f)|\geq |{\rm Pic}(\mathcal{O}_h)|\geq |{\rm Pic}(\mathcal{O}_K)|=2=|{\rm Pic}(\mathcal{O}_f)|$, and thus $|{\rm Pic}(\mathcal{O}_h)|=|{\rm Pic}(\mathcal{O}_f)|=|{\rm Pic}(\mathcal{O}_K)|=2$ and $\varphi$ is a group isomorphism.

\smallskip
Finally, we show that for each $r\in\mathbb{P}$ with $r\mid h$ and for each $A\in\mathcal{A}(\mathcal{I}^*_r(\mathcal{O}_h))$, $A$ is principal if and only if ${\rm N}(A)=r^2$. Let $r\in\mathbb{P}$ be such that $r\mid h$ and let $A\in\mathcal{A}(\mathcal{I}^*_r(\mathcal{O}_h))$. It follows from Lemma~\ref{lemma 2.5} that there is some $A^{\prime}\in\mathcal{A}(\mathcal{I}^*_r(\mathcal{O}_f))$ with $A^{\prime}\mathcal{O}_h=A$. Note that ${\rm N}(A)={\rm N}(A\mathcal{O}_K)={\rm N}(A^{\prime}\mathcal{O}_K)={\rm N}(A^{\prime})$ by \cite[pp. 99 and 100]{Ge-HK-Ka95}. Since $\varphi$ is a group isomorphism, $A$ is principal if and only if $A^{\prime}$ is principal if and only if ${\rm N}(A^{\prime})=r^2$ if and only if ${\rm N}(A)=r^2$. Now it follows from \cite[Theorem 4.14]{Br-Ge-Re20} that $\min\Delta(\mathcal{O}_h)>1$.
\end{proof}

\begin{corollary}\label{corollary 2.7}
Let $f\in\mathbb{N}$. Then $\min\Delta(\mathcal{O}_f)>1$ if and only if the following conditions are satisfied:
\begin{enumerate}
\item[(a)] $|{\rm Pic}(\mathcal{O}_f)|=|{\rm Pic}(\mathcal{O}_K)|=2$.
\item[(b)] $f$ is squarefree, $f$ is divisible by a ramified prime and $f$ is not divisible by a split prime.
\item[(c)] For each odd ramified $p\in\mathbb{P}$ with $p\mid f$, there exists some $A\in\mathcal{A}(\mathcal{I}^*_p(\mathcal{O}_f))$ such that ${\rm N}(A)=p^3$ and $A$ is not principal.
\item[(d)] If $2$ is ramified and $2\mid f$, then each $A\in\mathcal{A}(\mathcal{I}^*_2(\mathcal{O}_f))$ with ${\rm N}(A)=8$ is not principal.
\end{enumerate}
\end{corollary}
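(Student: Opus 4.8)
The plan is to match the four conditions against the characterization of unusual sets of distances in \cite[Theorem 4.14]{Br-Ge-Re20}. Recall, as already extracted in the proof of Theorem~\ref{theorem 2.6}, that this result asserts that $\min\Delta(\mathcal{O}_f)>1$ holds if and only if $|{\rm Pic}(\mathcal{O}_f)|=2$, the integer $f$ is squarefree, divisible by a ramified prime and not divisible by a split prime, and, for every prime divisor $p$ of $f$ and every $A\in\mathcal{A}(\mathcal{I}^*_p(\mathcal{O}_f))$, the ideal $A$ is principal if and only if ${\rm N}(A)=p^2$. Conditions (a) and (b) are essentially this list, except that (a) additionally records $|{\rm Pic}(\mathcal{O}_K)|=2$; I would supply the latter from Theorem~\ref{theorem 2.6}(1) in the forward direction and note that in the backward direction it serves only to force $|{\rm Pic}(\mathcal{O}_f)|=|{\rm Pic}(\mathcal{O}_K)|$. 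Thus the real content of the corollary is the equivalence, for the ramified prime divisors of $f$, between the clause ``principal $\iff$ norm $p^2$'' and the more economical statements (c) and (d).

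The forward direction is then immediate. Assuming $\min\Delta(\mathcal{O}_f)>1$, conditions (a) and (b) follow from \cite[Theorem 4.14]{Br-Ge-Re20} together with Theorem~\ref{theorem 2.6}(1). For (c), I would invoke the explicit description of the atoms of $\mathcal{I}^*_p(\mathcal{O}_f)$ for ramified $p$ in \cite[Theorem 3.6]{Br-Ge-Re20}, which in particular produces an atom $A$ with ${\rm N}(A)=p^3$; by the clause of \cite[Theorem 4.14]{Br-Ge-Re20} such an $A$ is not principal, since $p^3\neq p^2$. Condition (d) is even more direct: any $A\in\mathcal{A}(\mathcal{I}^*_2(\mathcal{O}_f))$ with ${\rm N}(A)=8\neq 4$ fails to be principal by the same clause.

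For the backward direction I would assume (a)--(d) and verify the clause of \cite[Theorem 4.14]{Br-Ge-Re20} for every $p\mid f$. The crucial structural input is the natural homomorphism ${\rm Pic}(\mathcal{O}_f)\rightarrow{\rm Pic}(\mathcal{O}_K)$, $[I]\mapsto[I\mathcal{O}_K]$, which is surjective (represent each class of $\mathcal{O}_K$ by an ideal coprime to $f$, cf. \cite[Corollary 2.11.16]{Ge-HK06a}) and hence, both groups having order $2$ by (a), is an isomorphism. I would combine this with two facts: ${\rm N}(A)={\rm N}(A\mathcal{O}_K)$ for invertible $A$ (as used in the proof of Theorem~\ref{theorem 2.6}(3), cf. \cite[pp. 99 and 100]{Ge-HK-Ka95}), and, since $f$ is squarefree by (b), the fact that for ramified $p$ there is a unique prime $P_K$ of $\mathcal{O}_K$ over $p$ with $P_K^2=p\mathcal{O}_K$. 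Consequently a $P$-primary invertible $A$ with ${\rm N}(A)=p^k$ satisfies $A\mathcal{O}_K=P_K^k$, so an atom of norm $p^2$ has $A\mathcal{O}_K=p\mathcal{O}_K$ and an atom of norm $p^3$ has $A\mathcal{O}_K=P_K^3$.

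From here the argument proceeds as follows. By \cite[Theorem 3.6]{Br-Ge-Re20} every atom in $\mathcal{A}(\mathcal{I}^*_p(\mathcal{O}_f))$ for ramified $p$ has norm $p^2$ or $p^3$, while for inert $p$ the norm is an even power of $p$, so the only atom norm is $p^2$. An atom $A$ of norm $p^2$ has $[A\mathcal{O}_K]=[p\mathcal{O}_K]=0$, hence $[A]=0$ by the isomorphism, i.e. $A$ is principal; the same reasoning handles the inert primes, where $p\mathcal{O}_K$ is principal as well. An atom $A$ of norm $p^3$ has $[A\mathcal{O}_K]=[P_K]$, so $A$ is principal if and only if $P_K$ is principal; in particular all norm-$p^3$ atoms share a single class. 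This last point is the heart of the matter and the step I expect to be the main obstacle: conditions (c) and (d) assert only the existence of \emph{one} non-principal atom of norm $p^3$ (resp. that every listed atom of norm $8$ is non-principal), whereas \cite[Theorem 4.14]{Br-Ge-Re20} demands that \emph{every} atom of norm $\neq p^2$ be non-principal. The isomorphism upgrades the former to the latter: a single non-principal norm-$p^3$ atom forces $[P_K]\neq 0$, whence all norm-$p^3$ atoms are non-principal. Combining the two cases gives ``principal $\iff$ norm $p^2$'' for every $p\mid f$, and \cite[Theorem 4.14]{Br-Ge-Re20} then yields $\min\Delta(\mathcal{O}_f)>1$.
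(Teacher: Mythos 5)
Your proof is correct and follows the same overall reduction as the paper: both directions are routed through the characterization in \cite[Theorem 4.14]{Br-Ge-Re20}, the forward direction is immediate from Theorem~\ref{theorem 2.6}(1) together with \cite[Theorems 3.6 and 4.14]{Br-Ge-Re20}, and the backward direction amounts to verifying, for each $p\mid f$, that an atom of $\mathcal{I}_p^*(\mathcal{O}_f)$ is principal exactly when its norm is $p^2$. Where you diverge is in how that verification is carried out. The paper disposes of the three subcases by citation: inert atoms are principal ``along the same lines as in the proof of \cite[Proposition 4.19]{Br-Ge-Re20}'', norm-$p^3$ atoms are non-principal by (c), (d) and \cite[Remark 4.16]{Br-Ge-Re20}, and norm-$p^2$ atoms are principal by the claim inside the proof of \cite[Theorem 4.14]{Br-Ge-Re20}. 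You instead handle all three uniformly from the observation that the extension map ${\rm Pic}(\mathcal{O}_f)\rightarrow{\rm Pic}(\mathcal{O}_K)$ is a surjection between groups of order $2$, hence an isomorphism, combined with ${\rm N}(A)={\rm N}(A\mathcal{O}_K)$ and the identity $A\mathcal{O}_K=P_K^{k}$ for a $P$-primary invertible $A$ of norm $p^{k}$ over a ramified $p$. This makes explicit the mechanism that the cited Remark 4.16 and the internal claim encapsulate --- in particular it is precisely what upgrades the existential condition (c) to the universal statement demanded by \cite[Theorem 4.14]{Br-Ge-Re20}, since all norm-$p^3$ atoms land in the class $[P_K]$ --- and it renders the argument self-contained modulo the atom description in \cite[Theorem 3.6]{Br-Ge-Re20}. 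One small point to tighten: for inert $p$ you should take ``every atom has norm $p^2$'' directly from the explicit list in \cite[Theorem 3.6]{Br-Ge-Re20} rather than deduce it from ``the norm is an even power of $p$''; that premise alone would not exclude atoms of norm $p^{2j}$ with $j\geq 2$.
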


\begin{proof}
If $\min\Delta(\mathcal{O}_f)>1$, then the conditions (a)--(d) are satisfied by Theorem~\ref{theorem 2.6}(1) and \cite[Theorems 3.6 and 4.14]{Br-Ge-Re20}. Now let the conditions (a)--(d) be satisfied. By \cite[Theorem 4.14]{Br-Ge-Re20}, it remains to show that for each $p\in\mathbb{P}$ with $p\mid f$ and each $A\in\mathcal{A}(\mathcal{I}^*_p(\mathcal{O}_f))$, $A$ is principal if and only if ${\rm N}(A)=p^2$. Let $p\in\mathbb{P}$ be such that $p\mid f$ and let $A\in\mathcal{A}(\mathcal{I}^*_p(\mathcal{O}_f))$. If $p$ is inert, then ${\rm N}(A)=p^2$ by \cite[Theorem 3.6]{Br-Ge-Re20} and it follows along the same lines as in the proof of \cite[Proposition 4.19]{Br-Ge-Re20} that $A$ is principal. Now let $p$ be ramified. Note that ${\rm N}(A)\in\{p^2,p^3\}$ by \cite[Theorem 3.6]{Br-Ge-Re20}. If ${\rm N}(A)=p^3$, then it follows from conditions (c) and (d) and from \cite[Remark 4.16]{Br-Ge-Re20} that $A$ is not principal. Moreover, if ${\rm N}(A)=p^2$, then $A$ is principal by the claim in the proof of \cite[Theorem 4.14]{Br-Ge-Re20}.
\end{proof}

\begin{corollary}\label{corollary 2.8}
Let $f\in\mathbb{N}$ and let $\beta\in\{0,1\}$ be such that $f\mathsf{d}_K\equiv\beta\mod 2$. Then $\min\Delta(\mathcal{O}_f)>1$ if and only if the following conditions are satisfied:
\begin{enumerate}
\item[(a)] $|{\rm Pic}(\mathcal{O}_f)|=|{\rm Pic}(\mathcal{O}_K)|=2$.
\item[(b)] $f$ is squarefree, $f$ is divisible by a ramified prime and $f$ is not divisible by a split prime.
\item[(c)] For each odd ramified $p\in\mathbb{P}$ with $p\mid f$ and each $a,b\in\mathbb{Z}$, we have $|p(2pa+b\beta)^2-(\frac{f}{p})^2\frac{\mathsf{d}_K}{p}b^2|\not=4$.
\item[(d)] If $2$ is ramified and $2\mid f$, then for each $a,b\in\mathbb{Z}$, $|2a^2-(\frac{f}{2})^2\frac{\mathsf{d}_K}{2}b^2|\not=4$.
\end{enumerate}
\end{corollary}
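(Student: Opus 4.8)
The plan is to deduce this from Corollary~\ref{corollary 2.7}, of which the present statement is just an explicit reformulation. Conditions (a) and (b) here are \emph{verbatim} the same as conditions (a) and (b) of Corollary~\ref{corollary 2.7}, so it suffices to prove that, under the standing assumptions (a) and (b), condition (c) above is equivalent to condition (c) of Corollary~\ref{corollary 2.7}, and likewise (d) above to (d) of Corollary~\ref{corollary 2.7}. In other words, the entire task is to translate the statements ``$A$ is (not) principal'' for the relevant atoms $A$ of norm $p^3$ (for odd ramified $p$) and norm $8$ (for $p=2$) into the displayed (non-)representability conditions for the norm form of $\mathcal{O}_f$.

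The key computational tool I would record first is the dictionary between principality and norm forms, together with the uniform identity $4\,{\rm N}(s+tf\omega)=(2s+\beta tf)^2-\mathsf{d}_Kf^2t^2$ for $s,t\in\mathbb{Z}$, valid in both cases $\omega=\sqrt{d}$ (giving $\beta=0$) and $\omega=\frac{1+\sqrt{d}}{2}$ with $f$ odd (giving $\beta=1$); this is exactly where the factor $4$ on the right-hand sides of (c) and (d) and the term $2pa+b\beta$ originate. An invertible ideal $A$ with ${\rm N}(A)=n$ is principal if and only if there is some $y=s+tf\omega\in A$ with $|{\rm N}(y)|=n$. For an odd ramified $p\mid f$ and $n=p^3$, imposing $|{\rm N}(y)|=p^3$ forces $p^2\mid(2s+\beta tf)$, and after dividing the identity by $p^3$ one is left with an equation of the shape $|p\,w^2-(\frac{f}{p})^2\frac{\mathsf{d}_K}{p}b^2|=4$; the further membership of $y$ in the explicit atom then refines $w$ to the form $2pa+\beta b$, producing precisely the expression in (c). For $p=2$ the analogous reductions give $|8u^2-(\frac{f}{2})^2\frac{\mathsf{d}_K}{2}v^2|=1$ and $|2(2u+v)^2-(\frac{f}{2})^2\frac{\mathsf{d}_K}{2}v^2|=1$, which are essentially the two forms already computed in the proof of Theorem~\ref{theorem 2.6}(1).

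Next I would invoke \cite[Theorem 3.6]{Br-Ge-Re20} to replace the abstract atoms of Corollary~\ref{corollary 2.7} by the concrete ideals it provides, and run the dictionary above on them. For odd ramified $p$ this pins down a specific atom of norm $p^3$ whose principality is equivalent to the displayed equation being solvable. To match Corollary~\ref{corollary 2.7}(c), which only asserts the \emph{existence} of a non-principal atom of norm $p^3$, I would use that condition (a) forces the natural map ${\rm Pic}(\mathcal{O}_f)\rightarrow{\rm Pic}(\mathcal{O}_K)$ to be an isomorphism (both groups have order $2$, and surjectivity is obtained exactly as in the proof of Theorem~\ref{theorem 2.6}(3)); since every atom of norm $p^3$ extends to $\mathfrak{p}^3$ in $\mathcal{O}_K$, they all lie in a single class, and together with \cite[Remark 4.16]{Br-Ge-Re20} this gives the dichotomy ``all atoms of norm $p^3$ are principal, or all are non-principal''. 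Hence ``there exists a non-principal atom'' becomes ``the chosen atom is non-principal'', i.e. the equation in (c) has no solution. For $p=2$ the two norm-$8$ atoms $8\mathbb{Z}+f\sqrt{d}\mathbb{Z}$ and $8\mathbb{Z}+(4+f\sqrt{d})\mathbb{Z}$ are treated simultaneously: a short parity analysis shows that their two principality forms merge into the single equation $|2a^2-(\frac{f}{2})^2\frac{\mathsf{d}_K}{2}b^2|=4$, whose solvability encodes ``at least one of the two atoms is principal'', so its unsolvability is exactly condition (d) of Corollary~\ref{corollary 2.7}.

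The main obstacle is precisely this explicit matching of norm forms. The delicate points are (i) verifying, via \cite[Theorem 3.6]{Br-Ge-Re20} and the forced square-factor cancellation, that a generator of the relevant atom of norm $p^3$ must have $p^2\mid(2s+\beta tf)$, so that the displayed form genuinely carries $p^3$ and the refined coefficient $2pa+b\beta$ rather than a weaker power of $p$; (ii) checking that the single equation in (d) really accounts for \emph{both} norm-$8$ atoms at once; and (iii) confirming that the class-group dichotomy legitimately converts the existential clause in Corollary~\ref{corollary 2.7}(c) into the universal non-solvability statement in (c). Once these are in place, the equivalences (c)$\Leftrightarrow$(c) and (d)$\Leftrightarrow$(d) hold, and the corollary follows at once from Corollary~\ref{corollary 2.7}.
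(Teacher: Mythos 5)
Your proposal is correct and follows essentially the same route as the paper: the paper's proof likewise reduces to Corollary~\ref{corollary 2.7} by computing, via \cite[Theorem 3.6]{Br-Ge-Re20}, that the explicit atom $A_p=p^3\mathbb{Z}+\frac{p^2\beta+f\sqrt{\mathsf{d}_K}}{2}\mathbb{Z}$ is principal exactly when the equation in (c) is solvable, and that the set of norm-$8$ atoms contains a principal ideal exactly when the equation in (d) is solvable (including the same parity analysis merging the two norm-$8$ atoms into one equation). The only organizational difference lies in the direction $\min\Delta(\mathcal{O}_f)>1\Rightarrow$ (c), where the paper invokes \cite[Theorem 4.14]{Br-Ge-Re20} directly (which forces \emph{every} atom of norm $p^3$ to be non-principal), whereas you convert the existential clause of Corollary~\ref{corollary 2.7}(c) into a statement about the single atom $A_p$ via the all-or-nothing class dichotomy --- the same dichotomy the paper's proof of Corollary~\ref{corollary 2.7} extracts from \cite[Remark 4.16]{Br-Ge-Re20} --- so both variants are sound.
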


\begin{proof}
For $p\in\mathbb{P}$ set $A_p=p^3\mathbb{Z}+\frac{p^2\beta+f\sqrt{\mathsf{d}_K}}{2}\mathbb{Z}$. Note that if $p$ is an odd ramified prime with ${\rm v}_p(f)=1$, then $A_p\in\mathcal{A}(\mathcal{I}^*_p(\mathcal{O}_f))$ and ${\rm N}(A_p)=p^3$ by \cite[Theorem 3.6]{Br-Ge-Re20} and $A_p$ is principal if and only if there are some $a,b\in\mathbb{Z}$ such that $|p(2pa+b\beta)^2-(\frac{f}{p})^2\frac{\mathsf{d}_K}{p}b^2|=4$. For the rest of this paragraph let $2$ be ramified and let ${\rm v}_2(f)=1$. Observe that $\{A\in\mathcal{A}(\mathcal{I}^*_2(\mathcal{O}_f)):{\rm N}(A)=8\}=\{8\mathbb{Z}+(2k+f\sqrt{d})\mathbb{Z}:k\in [0,3],k\equiv d\mod 2\}$ by \cite[Theorem 3.6]{Br-Ge-Re20}. Furthermore, $\{A\in\mathcal{A}(\mathcal{I}^*_2(\mathcal{O}_f)):{\rm N}(A)=8\}$ contains a principal ideal of $\mathcal{O}_f$ if and only if there are some $a,b\in\mathbb{Z}$ and some $k\in [0,3]$ such that $k\equiv d\mod 2$ and $|(8a+2kb)^2-f^2db^2|=8$ if and only if there are some $a,b\in\mathbb{Z}$ and some $k\in [0,3]$ such that $k\equiv d\mod 2$ and $|2(4a+kb)^2-(\frac{f}{2})^2\frac{\mathsf{d}_K}{2}b^2|=4$. Also note that if $a,b\in\mathbb{Z}$ are such that $|2a^2-(\frac{f}{2})^2\frac{\mathsf{d}_K}{2}b^2|=4$, then $a\equiv d\mod 2$, $b$ is odd and if $k\in [0,3]$ with $ab\equiv k\mod 4$, then $k\equiv d\mod 2$ and $a\equiv kb\mod 4$. We conclude that $\{A\in\mathcal{A}(\mathcal{I}^*_2(\mathcal{O}_f)):{\rm N}(A)=8\}$ contains a principal ideal of $\mathcal{O}_f$ if and only if there are some $a,b\in\mathbb{Z}$ such that $|2a^2-(\frac{f}{2})^2\frac{\mathsf{d}_K}{2}b^2|=4$.

\smallskip
If the conditions (a)--(d) are satisfied, then it is an immediate consequence of Corollary~\ref{corollary 2.7} that\linebreak $\min\Delta(\mathcal{O}_f)>1$. Now let $\min\Delta(\mathcal{O}_f)>1$. Then conditions (a) and (b) are satisfied by Corollary~\ref{corollary 2.7}. If $p$ is an odd ramified prime with $p\mid f$, then $A_p$ is not principal by \cite[Theorem 4.14]{Br-Ge-Re20}, and thus $|p(2pa+b\beta)^2-(\frac{f}{p})^2\frac{\mathsf{d}_K}{p}b^2|\not=4$ for all $a,b\in\mathbb{Z}$. If $2$ is ramified and $2\mid f$, then $\{A\in\mathcal{A}(\mathcal{I}^*_2(\mathcal{O}_f)):{\rm N}(A)=8\}$ does not contain a principal ideal of $\mathcal{O}_f$ by \cite[Theorem 4.14]{Br-Ge-Re20}, and hence $|2a^2-(\frac{f}{2})^2\frac{\mathsf{d}_K}{2}b^2|\not=4$ for all $a,b\in\mathbb{Z}$.
\end{proof}

\begin{theorem}\label{theorem 2.9}
Let $f\in\mathbb{N}$. Then $\min\Delta(\mathcal{O}_f)>1$ if and only if the following conditions are satisfied:
\begin{enumerate}
\item[(a)] $|{\rm Pic}(\mathcal{O}_f)|=|{\rm Pic}(\mathcal{O}_K)|=2$.
\item[(b)] $f$ is squarefree, $f$ is divisible by a ramified prime and $f$ is not divisible by a split prime.
\item[(c)] For each ramified $p\in\mathbb{P}$ with $p\mid f$ and each $a,b\in\mathbb{Z}$, $|pa^2-\frac{\mathsf{d}_K}{p}b^2|\not=4$.
\end{enumerate}
\end{theorem}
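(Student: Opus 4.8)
The plan is to deduce Theorem~\ref{theorem 2.9} from Corollary~\ref{corollary 2.8}, whose conditions (a) and (b) coincide verbatim with those of the present statement. Keeping (a) and (b) as standing hypotheses, it therefore suffices to prove that condition (c) here is equivalent to the conjunction of conditions (c) and (d) of Corollary~\ref{corollary 2.8}. All three conditions assert the non-representability of $\pm 4$ by certain binary quadratic forms, so the entire matter reduces to comparing the ``free'' form $p a^2-\frac{\mathsf{d}_K}{p}b^2$ with the ``constrained'' forms appearing in the corollary (here $\beta$ is as in Corollary~\ref{corollary 2.8}).

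For the implication that (a)--(c) yield $\min\Delta(\mathcal{O}_f)>1$, I would use a trivial inclusion of value sets. Substituting $A=2pa+b\beta$ and $B=\frac{f}{p}b$ transforms $pA^2-\frac{\mathsf{d}_K}{p}B^2$ into exactly $p(2pa+b\beta)^2-(\frac{f}{p})^2\frac{\mathsf{d}_K}{p}b^2$, the expression in Corollary~\ref{corollary 2.8}(c); likewise $A=a$, $B=\frac{f}{2}b$ produces the expression in Corollary~\ref{corollary 2.8}(d). Hence every value attained by the corollary's constrained forms is already attained by the free form of Theorem~\ref{theorem 2.9}(c). If the latter never equals $\pm 4$, then neither do the former, so (c) implies conditions (c) and (d) of Corollary~\ref{corollary 2.8}, whence $\min\Delta(\mathcal{O}_f)>1$.

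The converse is the substantial direction. Assuming $\min\Delta(\mathcal{O}_f)>1$, conditions (a) and (b) are granted by Corollary~\ref{corollary 2.7}, and I would establish (c) by contradiction. Suppose some ramified prime $p\mid f$ and integers $a,b$ satisfy $|pa^2-\frac{\mathsf{d}_K}{p}b^2|=4$, and set $\gamma=pa+b\sqrt{\mathsf{d}_K}$. Since $\sqrt{\mathsf{d}_K}\in\mathcal{O}_K$, we have $\gamma\in\mathcal{O}_K$; since $p\mid\mathsf{d}_K$ forces $\sqrt{\mathsf{d}_K}$ into the prime $\mathfrak{p}$ of $\mathcal{O}_K$ above $p$, we get $\gamma\in\mathfrak{p}$; and ${\rm N}(\gamma)=p(pa^2-\frac{\mathsf{d}_K}{p}b^2)$ gives $|{\rm N}(\gamma)|=4p$. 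Therefore $\gamma\mathcal{O}_K=\mathfrak{p}\mathfrak{c}$ for an integral ideal $\mathfrak{c}$ of norm $4$.

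The crucial observation is that, under (a), every ideal of $\mathcal{O}_K$ of norm $4$ is principal: it is either $2\mathcal{O}_K$ or the square of a prime above $2$, and as $|{\rm Pic}(\mathcal{O}_K)|=2$ has exponent two, all squares are principal. Thus $\mathfrak{c}$ is principal, and hence so is $\mathfrak{p}$. Because $|{\rm Pic}(\mathcal{O}_f)|=|{\rm Pic}(\mathcal{O}_K)|=2$, the natural epimorphism ${\rm Pic}(\mathcal{O}_f)\to{\rm Pic}(\mathcal{O}_K)$ is an isomorphism (as in the proof of Theorem~\ref{theorem 2.6}(3)); combined with $\mathfrak{p}^2=p\mathcal{O}_K$ being principal, principality of $\mathfrak{p}$ forces every invertible ideal $A$ of $\mathcal{O}_f$ with $A\mathcal{O}_K=\mathfrak{p}^3$ to be principal, in particular the atoms of norm $p^3$ (and, for $p=2$, the atoms of norm $8$). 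This contradicts Corollary~\ref{corollary 2.7}(c) (respectively~(d)), so no such $a,b$ exist and (c) holds. The main obstacle is exactly this last passage: isolating the defect ideal $\mathfrak{c}$, recognizing that the class number $2$ hypothesis renders it principal, and transporting principality of $\mathfrak{p}$ back to $\mathcal{O}_f$ to clash with the known non-principality of the norm-$p^3$ atoms; by contrast, the forward implication is the elementary one.
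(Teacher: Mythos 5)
Your proof is correct, and while the forward direction coincides with the paper's (the same substitution $A=2pa+b\beta$, $B=\tfrac{f}{p}b$ reducing your condition (c) to conditions (c) and (d) of Corollary~\ref{corollary 2.8}), your argument for the converse takes a genuinely different route. The paper first reduces to the conductor $f=p$ via Theorem~\ref{theorem 2.6}(3) and then runs an explicit element-level computation: from a hypothetical solution of $|px^2-\frac{\mathsf{d}_K}{p}y^2|=4$ it multiplies the element $pa+b\sqrt{d}$ by a suitable power $\varepsilon^k$ of the fundamental unit (using $(\mathcal{O}_K^{\times}:\mathcal{O}_p^{\times})=p$, hence $p\nmid s$) to manufacture a solution of the constrained form that Corollary~\ref{corollary 2.8}, applied to $\mathcal{O}_p$, forbids; this forces a case split on $d\bmod 4$. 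You instead translate solvability directly into principality of the ramified prime $\mathfrak{p}$ above $p$: the element $\gamma=pa+b\sqrt{\mathsf{d}_K}$ lies in $\mathfrak{p}$ (since $\mathsf{d}_K\in\mathfrak{p}^2$ gives $\sqrt{\mathsf{d}_K}\in\mathfrak{p}$) and has $|{\rm N}(\gamma)|=4p$, so its norm-$4$ cofactor is either $2\mathcal{O}_K$ or the square of a prime above $2$ and hence principal because the class group has exponent $2$; then the surjection ${\rm Pic}(\mathcal{O}_f)\rightarrow{\rm Pic}(\mathcal{O}_K)$ (see \cite[Theorem 2.6.7]{Ge-HK06a}), which is bijective under condition (a), transports principality of $A\mathcal{O}_K=\mathfrak{p}^3$ back to the norm-$p^3$ (resp.\ norm-$8$) atoms of $\mathcal{O}_f$, contradicting Corollary~\ref{corollary 2.7}(c) (resp.\ (d)). I checked the individual steps and see no gap; the only points worth making explicit in a write-up are that an atom $A$ with ${\rm N}(A)=p^3$ indeed satisfies $A\mathcal{O}_K=\mathfrak{p}^3$ (it is $P$-primary and norms are preserved under extension) and that the relevant sets of atoms are nonempty, both of which follow from results the paper already invokes. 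What your approach buys is brevity and transparency --- it exhibits condition (c) as precisely the non-principality of $\mathfrak{p}$ in $\mathcal{O}_K$ and avoids the congruence case analysis --- whereas the paper's computation is essentially an explicit, generator-by-generator version of the same Picard-group transfer, kept self-contained at the level of binary quadratic forms.
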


\begin{proof}
First let conditions (a)--(c) be satisfied. Let $p\in\mathbb{P}$ be odd and ramified with $p\mid f$ and let $\beta\in\{0,1\}$ be such that $f\mathsf{d}_K\equiv\beta\mod 2$. Assume that there are some $a,b\in\mathbb{Z}$ such that $|p(2pa+b\beta)^2-(\frac{f}{p})^2\frac{\mathsf{d}_K}{p}b^2|=4$. Then there are some $g,h\in\mathbb{Z}$ such that $|pg^2-\frac{\mathsf{d}_K}{p}h^2|=4$, a contradiction. Consequently, for all $a,b\in\mathbb{Z}$, $|p(2pa+b\beta)^2-(\frac{f}{p})^2\frac{\mathsf{d}_K}{p}b^2|\not=4$. Now let $2$ be ramified and $2\mid f$. Then $\mathsf{d}_K=4d$. Assume that there are some $a,b\in\mathbb{Z}$ such that $|2a^2-(\frac{f}{2})^2\frac{\mathsf{d}_K}{2}b^2|=4$. Then there are some $g,h\in\mathbb{Z}$ such that $|2g^2-\frac{\mathsf{d}_K}{2}h^2|=4$, a contradiction. Therefore, $|2a^2-(\frac{f}{2})^2\frac{\mathsf{d}_K}{2}b^2|\not=4$ for all $a,b\in\mathbb{Z}$. It follows from Corollary~\ref{corollary 2.8} that $\min\Delta(\mathcal{O}_f)>1$.

\smallskip
Now let $\min\Delta(\mathcal{O}_f)>1$. Then conditions (a) and (b) are satisfied by Corollary~\ref{corollary 2.8}. Let $p\in\mathbb{P}$ be ramified with $p\mid f$. Let $\beta\in\{0,1\}$ be such that $p\mathsf{d}_K\equiv\beta\mod 2$. By Theorem~\ref{theorem 2.6}(3), we have $\min\Delta(\mathcal{O}_p)>1$. If $p=2$, then $|pa^2-\frac{\mathsf{d}_K}{p}b^2|\not=4$ for all $a,b\in\mathbb{Z}$ by Corollary~\ref{corollary 2.8} (with $f=2$). Now let $p$ be odd. It follows from Corollary~\ref{corollary 2.8} (with $f=p$) that for all $g,h\in\mathbb{Z}$, $|p(2pg+h\beta)^2-\frac{\mathsf{d}_K}{p}h^2|\not=4$. Assume that there are some $x,y\in\mathbb{Z}$ such that $|px^2-\frac{\mathsf{d}_K}{p}y^2|=4$.

\medskip
CASE 1: $d\not\equiv 1\mod 4$. We have $\mathsf{d}_K=4d$, $\beta=0$, $|(pa)^2-db^2|=p$ for some $a,b\in\mathbb{Z}$ and $|(p^2g)^2-dh^2|\not=p$ for all $g,h\in\mathbb{Z}$. There are some $r,s\in\mathbb{N}$ such that $\varepsilon=r+s\sqrt{d}$. Since $|{\rm Pic}(\mathcal{O}_p)|=|{\rm Pic}(\mathcal{O}_K)|$ by Corollary~\ref{corollary 2.8}, we have $(\mathcal{O}_K^{\times}:\mathcal{O}_p^{\times})=p$. Therefore, $\varepsilon\not\in\mathcal{O}_p$, and thus $p\nmid s$. Since $|(pa)^2-db^2|=p$, it follows that $p\nmid b$. We infer that $p\nmid bs\frac{d}{p}$, and thus there is some $k\in [1,p]$ such that $p\mid ar+kbs\frac{d}{p}$. There are some $u,v\in\mathbb{N}$ with $\varepsilon^k=u+v\sqrt{d}$. Observe that $u\equiv r^k\mod p$ and $v\equiv kr^{k-1}s\mod p$. Note that $p\mid r^{k-1}(ar+kbs\frac{d}{p})=ar^k+kr^{k-1}bs\frac{d}{p}$, and hence $p\mid au+bv\frac{d}{p}$ and $p^2\mid pau+bvd$. Consequently, $pau+bvd=p^2g$ for some $g\in\mathbb{Z}$. Set $h=pav+bu$. Then $h\in\mathbb{Z}$ and $p=|{\rm N}(pa+b\sqrt{d})|=|{\rm N}((pa+b\sqrt{d})\varepsilon^k)|=|{\rm N}(p^2g+h\sqrt{d})|=|(p^2g)^2-dh^2|$, a contradiction.

\medskip
CASE 2: $d\equiv 1\mod 4$. Observe that $\mathsf{d}_K=d$, $\beta=1$, $|(pa)^2-db^2|=4p$ for some $a,b\in\mathbb{Z}$ and $|(2p^2g+ph)^2-dh^2|\not=4p$ for all $g,h\in\mathbb{Z}$. There are some $r,s\in\mathbb{N}$ such that $r\equiv s\mod 2$ and $\varepsilon=\frac{r}{2}+\frac{s}{2}\sqrt{d}$. Since $|(pa)^2-db^2|=4p$, we have $a\equiv b\mod 2$ and $p\nmid b$. Moreover, since $|{\rm Pic}(\mathcal{O}_p)|=|{\rm Pic}(\mathcal{O}_K)|$ by Corollary~\ref{corollary 2.8}, we have $(\mathcal{O}_K^{\times}:\mathcal{O}_p^{\times})=p$. Consequently, $\varepsilon\not\in\mathcal{O}_p$, and thus $p\nmid s$. This implies that $p\nmid s(b\frac{d}{p}-pa)$, and hence there is some $k\in [1,p]$ such that $r(a-b)+ks(b\frac{d}{p}-pa)\equiv 0\mod p$. There are some $u,v\in\mathbb{N}$ such that $u\equiv v\mod 2$ and $\varepsilon^k=\frac{u}{2}+\frac{v}{2}\sqrt{d}$. Note that $2^{k-1}u\equiv r^k\mod p$ and $2^{k-1}v\equiv kr^{k-1}s\mod p$. It follows that $2^{k-1}(u(a-b)+v(b\frac{d}{p}-pa))\equiv r^k(a-b)+kr^{k-1}s(b\frac{d}{p}-pa)\equiv 0\mod p$. Therefore, $u(a-b)+v(b\frac{d}{p}-pa)\equiv 0\mod p$, and thus $pu(a-b)+v(bd-p^2a)\equiv 0\mod p^2$. Since $a\equiv b\mod 2$ and $pu\equiv u\equiv v\mod 2$, we infer that $pu(a-b)+v(bd-p^2a)\equiv pu(a-b)+v(b-a)=(a-b)(pu-v)\equiv 0\mod 4$, and hence $pu(a-b)+v(bd-p^2a)\equiv 0\mod 4p^2$. We have $pu(a-b)+v(bd-p^2a)=4p^2g$ for some $g\in\mathbb{Z}$. Set $h=\frac{bu+pav}{2}$. Observe that $bu\equiv -pav\mod 2$, and so $h\in\mathbb{Z}$. Finally, $4p=|{\rm N}(pa+b\sqrt{d})|=|{\rm N}((pa+b\sqrt{d})\varepsilon^k)|=|{\rm N}(\frac{pau+bvd}{2}+h\sqrt{d})|=|{\rm N}(2p^2g+ph+h\sqrt{d})|=|(2p^2g+ph)^2-dh^2|$, a contradiction.
\end{proof}

\begin{corollary}\label{corollary 2.10}
Let $f,g,h\in\mathbb{N}$ be squarefree such that $f$ is only divisible by inert primes, $g$ and $h$ are coprime and products of ramified primes, $\min\Delta(\mathcal{O}_{fg})>1$ and $\min\Delta(\mathcal{O}_{fh})>1$. Then $\min\Delta(\mathcal{O}_{fgh})>1$.
\end{corollary}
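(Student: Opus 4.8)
The plan is to verify the three conditions (a)--(c) of Theorem~\ref{theorem 2.9} for the conductor $fgh$, exploiting that they are already known to hold for $fg$ and $fh$. Conditions (b) and (c) should be almost immediate. For (b), I first note that a prime is never simultaneously inert and ramified, so $f$ (a product of inert primes) is coprime to the products of ramified primes $g$ and $h$; together with $\gcd(g,h)=1$ this makes $f,g,h$ pairwise coprime and $fgh$ squarefree. Since $\min\Delta(\mathcal{O}_{fg})>1$, condition (b) of Theorem~\ref{theorem 2.9} applied to $fg$ shows that $fg$, and hence $g$, is divisible by a ramified prime, so $fgh$ is too; and $fgh$ has no split prime divisor because $f,g,h$ are built only from inert and ramified primes. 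For (c), the decisive observation is that the inequality $|pa^2-\frac{\mathsf{d}_K}{p}b^2|\neq 4$ depends only on the ramified prime $p$ and on $\mathsf{d}_K$, not on the conductor. Every ramified prime dividing $fgh$ divides $g$ or $h$; in the first case (c) follows from Theorem~\ref{theorem 2.9}(c) for $fg$, in the second from Theorem~\ref{theorem 2.9}(c) for $fh$.

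The real content is condition (a), namely $|{\rm Pic}(\mathcal{O}_{fgh})|=2$, and I would attack it through the class number formula, writing $U_n=(\mathcal{O}_K^{\times}:\mathcal{O}_n^{\times})$. Set $F=\prod_{p\mid f}(p+1)$. Using that each inert prime contributes a factor $(p+1)/p$ while each ramified prime contributes $1$, and that $f=\prod_{p\mid f}p$ is squarefree, the formula gives, for every product $N$ of ramified primes coprime to $f$,
\[
|{\rm Pic}(\mathcal{O}_{fN})|=\frac{2NF}{U_{fN}}.
\]
Thus the hypotheses $|{\rm Pic}(\mathcal{O}_{fg})|=|{\rm Pic}(\mathcal{O}_{fh})|=2$ (which follow from Corollary~\ref{corollary 2.7}(a)) translate into $U_{fg}=gF$ and $U_{fh}=hF$, and the goal reduces to proving $U_{fgh}=ghF$.

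Two structural inputs will drive the computation. The first is that for squarefree $n$ the unit index is multiplicative in the sense $U_n={\rm lcm}\{U_p:p\in\mathbb{P},\,p\mid n\}$: indeed $\mathcal{O}_n$ is stable under conjugation, so $\mathcal{O}_n^{\times}=\mathcal{O}_K^{\times}\cap\mathcal{O}_n$, and writing $\varepsilon^m=a_m+b_m\omega$ with $a_m,b_m\in\mathbb{Z}$, membership $\varepsilon^m\in\mathcal{O}_n$ amounts to $n\mid b_m$; intersecting the subgroups $\{m:p\mid b_m\}=U_p\mathbb{Z}$ over $p\mid n$ yields the lcm. The second is that $U_p=p$ for each ramified prime $p$ dividing $g$ or $h$: applying Theorem~\ref{theorem 2.6}(3) to $\mathcal{O}_{fg}$ (respectively $\mathcal{O}_{fh}$) gives $\min\Delta(\mathcal{O}_p)>1$, whence $|{\rm Pic}(\mathcal{O}_p)|=2$ by Corollary~\ref{corollary 2.7}(a), and the single-prime instance of the class number formula forces $U_p=p$. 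Consequently $U_g=g$ and $U_h=h$.

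With these in hand the computation closes cleanly. Since $U_{fg}={\rm lcm}(U_f,g)=gF$ and ${\rm lcm}(U_f,g)\gcd(U_f,g)=U_f g$, one gets $\gcd(U_f,g)=U_f/F$, a positive integer dividing $g$; symmetrically $\gcd(U_f,h)=U_f/F$ divides $h$. As $\gcd(g,h)=1$, the common value $U_f/F$ divides $1$, so $U_f=F$ and moreover $\gcd(F,g)=\gcd(F,h)=1$. Since also $\gcd(g,h)=1$, the numbers $F,g,h$ are pairwise coprime, whence $U_{fgh}={\rm lcm}(U_f,U_g,U_h)={\rm lcm}(F,g,h)=Fgh$. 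Substituting into the displayed formula gives $|{\rm Pic}(\mathcal{O}_{fgh})|=2$, so (a) holds and Theorem~\ref{theorem 2.9} yields $\min\Delta(\mathcal{O}_{fgh})>1$. I expect the main obstacle to be precisely this unit-index bookkeeping: making rigorous the multiplicativity $U_n={\rm lcm}\{U_p:p\mid n\}$ and then extracting the coprimality $\gcd(F,g)=\gcd(F,h)=1$ purely from the two hypotheses, rather than from any a priori bound on $U_f$; everything else is a direct application of the established characterization and the class number formula.
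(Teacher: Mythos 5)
Your proposal is correct, and its overall strategy coincides with the paper's: both reduce the claim to the conditions of Theorem~\ref{theorem 2.9}, dispose of conditions (b) and (c) by the observations that $f,g,h$ are pairwise coprime and that the inequality $|pa^2-\frac{\mathsf{d}_K}{p}b^2|\neq 4$ does not depend on the conductor, and locate the real work in showing $(\mathcal{O}_K^{\times}:\mathcal{O}_{fgh}^{\times})=f'gh$ with $f'=\prod_{p\mid f}(p+1)$, from which $|{\rm Pic}(\mathcal{O}_{fgh})|=2$ follows by the class number formula. Where you diverge is in how that unit index is pinned down. The paper argues by a sandwich: since $\mathcal{O}_{fgh}^{\times}\subseteq\mathcal{O}_{fg}^{\times}\cap\mathcal{O}_{fh}^{\times}$ one gets $f'g\mid(\mathcal{O}_K^{\times}:\mathcal{O}_{fgh}^{\times})$ and $f'h\mid(\mathcal{O}_K^{\times}:\mathcal{O}_{fgh}^{\times})$, while the general bound $(\mathcal{O}_K^{\times}:\mathcal{O}_{fgh}^{\times})\mid f'gh$ holds; dividing by $f'$ and using $\gcd(g,h)=1$ forces equality. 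You instead prove the multiplicativity $(\mathcal{O}_K^{\times}:\mathcal{O}_n^{\times})={\rm lcm}\{(\mathcal{O}_K^{\times}:\mathcal{O}_p^{\times}):p\mid n\}$ for squarefree $n$ and compute $(\mathcal{O}_K^{\times}:\mathcal{O}_p^{\times})=p$ for each ramified $p\mid gh$ via Theorem~\ref{theorem 2.6}(3), then do a gcd/lcm calculation. Both arguments are sound; yours needs two extra structural inputs (the lcm decomposition, whose justification via $\varepsilon^m\in\mathcal{O}_n\iff n\mid b_m$ is correct, and $U_p=p$), but in exchange it yields the sharper by-products $(\mathcal{O}_K^{\times}:\mathcal{O}_f^{\times})=f'$ and $\gcd(f',g)=\gcd(f',h)=1$, which the paper's shorter sandwich argument never needs to establish.
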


\begin{proof}
Clearly, $fg$ and $fh$ are squarefree and $|{\rm Pic}(\mathcal{O}_{fg})|=|{\rm Pic}(\mathcal{O}_{fh})|=|{\rm Pic}(\mathcal{O}_K)|=2$ by Theorem~\ref{theorem 2.9}. Set $f^{\prime}=\prod_{p\in\mathbb{P},p\mid f} (p+1)$. Consequently, $(\mathcal{O}_K^{\times}:\mathcal{O}_{fg}^{\times})=f^{\prime}g$ and $(\mathcal{O}_K^{\times}:\mathcal{O}_{fh}^{\times})=f^{\prime}h$, and hence $f^{\prime}g,f^{\prime}h\mid (\mathcal{O}_K^{\times}:\mathcal{O}_{fgh}^{\times})$. Furthermore, $(\mathcal{O}_K^{\times}:\mathcal{O}_{fgh}^{\times})\mid f^{\prime}gh$. Observe that $g,h\mid\frac{(\mathcal{O}_K^{\times}:\mathcal{O}_{fgh}^{\times})}{f^{\prime}}\mid gh$. Since $g$ and $h$ are coprime, we infer that $\frac{(\mathcal{O}_K^{\times}:\mathcal{O}_{fgh}^{\times})}{f^{\prime}}=gh$, and thus $(\mathcal{O}_K^{\times}:\mathcal{O}_{fgh}^{\times})=f^{\prime}gh$. Therefore, $|{\rm Pic}(\mathcal{O}_{fgh})|=|{\rm Pic}(\mathcal{O}_K)|=2$. Since $fg$ and $fh$ are squarefree and $g$ and $h$ are coprime, we infer that $fgh$ is squarefree. Moreover, it follows from Theorem~\ref{theorem 2.9} that $fgh$ is divisible by a ramified prime, $fgh$ is not divisible by a split prime and for each ramified $p\in\mathbb{P}$ with $p\mid fgh$ and for all $a,b\in\mathbb{Z}$, $|pa^2-\frac{\mathsf{d}_K}{p}b^2|\not=4$. It is an immediate consequence of Theorem~\ref{theorem 2.9} that $\min\Delta(\mathcal{O}_{fgh})>1$.
\end{proof}

\section{Refined results and a characterization in the ``norm minus one'' case}\label{3}

In this section, we study the structure of the set of conductors of all orders $\mathcal{O}$ with $\min\Delta(\mathcal{O})>1$ in a fixed real quadratic number field. Furthermore, we show that if ${\rm N}(\varepsilon)=-1$ (i.e., if the norm of the fundamental unit of $\mathcal{O}_K$ is minus one), then the orders $\mathcal{O}$ with $\min\Delta(\mathcal{O})>1$ have a particularly simple description.

\medskip
We start with two definitions. Let $d\in\mathbb{N}_{\geq 2}$ be squarefree.
\begin{itemize}
\item Let $D_d=\{f\in\mathbb{N}:\min\Delta(\mathcal{O}_f)>1\}$, called the {\it set of unusual conductors} of $d$.
\item Let $D_d^{\prime}=\{f\in D_d:f$ is not divisible by an odd inert prime$\}$, called the {\it reduced set of unusual conductors} of $d$.
\end{itemize}

\begin{proposition}\label{proposition 3.1}
Let $d\in\mathbb{N}_{\geq 2}$ be squarefree.
\begin{enumerate}
\item $D_d^{\prime}\subseteq\{f\in\mathbb{N}:f\mid {\rm lcm}(2,d)\}$ and $D_d^{\prime}$ is finite.
\item If ${\rm N}(\varepsilon)=1$, then $D_d=D_d^{\prime}$ and $D_d$ is finite.
\item $D_d\not=\emptyset$ if and only if $D_d^{\prime}\not=\emptyset$.
\end{enumerate}
\end{proposition}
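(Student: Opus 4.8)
The plan is to reduce all three parts to the characterization of unusual conductors in Theorem~\ref{theorem 2.9} (equivalently Corollary~\ref{corollary 2.7}) together with the divisor statement in Theorem~\ref{theorem 2.6}(3). Parts (1) and (3) will then be short bookkeeping arguments based on the prime classification, while the forward inclusion $D_d\subseteq D_d^{\prime}$ in part (2) is the only place that needs a genuine number-theoretic input, namely Lemma~\ref{lemma 2.3}.

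For part (1), I would start from an arbitrary $f\in D_d^{\prime}$. By Theorem~\ref{theorem 2.9}(b), $f$ is squarefree, is divisible by a ramified prime, and is not divisible by a split prime; by the definition of $D_d^{\prime}$, it is also not divisible by an odd inert prime. Hence every odd prime divisor of $f$ must be ramified, so it divides $\mathsf{d}_K$ and therefore divides $d$ (since $\mathsf{d}_K\in\{d,4d\}$ and the prime is odd). As $2\mid {\rm lcm}(2,d)$ automatically, every prime divisor of $f$ divides ${\rm lcm}(2,d)$, and squarefreeness of $f$ upgrades this to $f\mid {\rm lcm}(2,d)$. Since $\{f\in\mathbb{N}:f\mid {\rm lcm}(2,d)\}$ is a finite set of divisors, finiteness of $D_d^{\prime}$ follows at once. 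For part (3), the direction $D_d^{\prime}\neq\emptyset\Rightarrow D_d\neq\emptyset$ is immediate from $D_d^{\prime}\subseteq D_d$. Conversely, given $f\in D_d$, Theorem~\ref{theorem 2.9}(b) tells me $f$ is squarefree and divisible by a ramified prime, so $h:=\prod_{p\mid f,\ p\text{ ramified}}p$ is a divisor of $f$ with $h>1$ that is divisible by a ramified prime; Theorem~\ref{theorem 2.6}(3) then gives $\min\Delta(\mathcal{O}_h)>1$, i.e.\ $h\in D_d$. Because every prime divisor of $h$ is ramified, $h$ has no odd inert prime divisor, so in fact $h\in D_d^{\prime}$, proving $D_d^{\prime}\neq\emptyset$.

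For part (2), the inclusion $D_d^{\prime}\subseteq D_d$ holds by definition, so it suffices to establish $D_d\subseteq D_d^{\prime}$ under the hypothesis ${\rm N}(\varepsilon)=1$. I would argue by contradiction: suppose some $f\in D_d$ is divisible by an odd inert prime $q$. Reusing the chain of reasoning from the proof of Theorem~\ref{theorem 2.6}(2), let $g$ be the product of the inert prime divisors of $f$; then $\mathcal{O}_g$ is half-factorial and $q\mid g$, so $\mathcal{O}_q$ is half-factorial (an overring of a half-factorial order between it and $\mathcal{O}_K$ is again half-factorial), whence $|{\rm Pic}(\mathcal{O}_q)|=|{\rm Pic}(\mathcal{O}_K)|$. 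Since $q$ is odd and, being inert, satisfies $q\nmid\mathsf{d}_K$, Lemma~\ref{lemma 2.3} forces ${\rm N}(\varepsilon)=-1$, contradicting our assumption. Thus no $f\in D_d$ has an odd inert prime divisor, giving $D_d=D_d^{\prime}$, and finiteness of $D_d$ is then inherited from part (1).

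I expect the only real obstacle to be this forward inclusion in part (2): the argument must verify that $q$ actually meets the hypotheses of Lemma~\ref{lemma 2.3} (odd and coprime to $\mathsf{d}_K$) and that passing from the half-factorial $\mathcal{O}_g$ down to $\mathcal{O}_q$ preserves half-factoriality. Everything else amounts to a direct unwinding of the characterizations together with the prime classification, so I anticipate no computational difficulties there.
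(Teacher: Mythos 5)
Your proof is correct and follows essentially the same route as the paper: part (1) via Theorem~\ref{theorem 2.9}(b) and squarefreeness, part (2) via half-factoriality of $\mathcal{O}_q$ from Theorem~\ref{theorem 2.6}(3) combined with Lemma~\ref{lemma 2.3}, and part (3) via Theorem~\ref{theorem 2.6}(3) applied to the ramified part of $f$. Your part (1) is in fact slightly more direct than the paper's, which detours through a cited classification of the inert part $g$ before concluding $g\in\{1,2\}$, a fact that already follows from the definition of $D_d^{\prime}$ as you observe.
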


\begin{proof}
(1) Let $f\in D_d^{\prime}$. Then $\min\Delta(\mathcal{O}_f)>1$ and $f$ is not divisible by an odd inert prime. It follows from Theorem~\ref{theorem 2.9} that $f$ is squarefree, $f$ is divisible by a ramified prime and $f$ is not divisible by a split prime. Let $g$ be the product of all inert prime divisors of $f$. We infer by \cite[Proposition 4.15]{Br-Ge-Re20} that $g\in\{1\}\cup\mathbb{P}\cup\{2p:p\in\mathbb{P}\setminus\{2\}\}$. Observe that $g\in\{1,2\}$. Clearly, $\frac{f}{g}\mid\mathsf{d}_K$ and if $g=2$, then $2$ is inert and $d\equiv 5\mod 8$. If $d\equiv 1\mod 4$, then $f=g\frac{f}{g}\mid 2\mathsf{d}_K={\rm lcm}(2,d)$. Now let $d\not\equiv 1\mod 4$. Observe that $g=1$ and $f\mid\mathsf{d}_K=4d$. Since $f$ is squarefree, we infer that $f\mid 2^{1-{\rm v}_2(d)}d={\rm lcm}(2,d)$. Consequently, $D_d^{\prime}\subseteq\{f\in\mathbb{N}:f\mid {\rm lcm}(2,d)\}$. It is clear that $D_d^{\prime}$ is finite, since ${\rm lcm}(2,d)$ has only finitely many integer divisors.

\bigskip
(2) Let ${\rm N}(\varepsilon)=1$. By (1), it remains to show that for each $f\in D_d$, $f$ is not divisible by an odd inert prime. Let $f\in D_d$. Assume that there is some odd inert $p\in\mathbb{P}$ such that $p\mid f$. Note that $\mathcal{O}_p$ is half-factorial by Theorem~\ref{theorem 2.6}(3), and hence $|{\rm Pic}(\mathcal{O}_p)|=|{\rm Pic}(\mathcal{O}_K)|$ by \cite[Theorem 6.2.1]{Ge-Ka-Re15a}. Consequently, ${\rm N}(\varepsilon)=-1$ by Lemma~\ref{lemma 2.3}, a contradiction.

\bigskip
(3) This follows from Theorem~\ref{theorem 2.6}(3).
\end{proof}

\begin{example}\label{example 3.2}
Let $f\in\mathbb{N}$.
\begin{enumerate}
\item If $p\in\mathbb{P}$ is such that $p\equiv 3\mod 4$, $2^p-1\in\mathbb{P}$ and $(f,\mathsf{d}_K)=(5(2^p-1),40)$, then $f$ is the product of an inert prime and a ramified prime and $\min\Delta(\mathcal{O}_f)=2$.
\item If $(f,\mathsf{d}_K)=(2190,365)$, then $f$ is the product of two distinct inert primes and two distinct ramified primes and $\min\Delta(\mathcal{O}_f)=2$.
\end{enumerate}
\end{example}

\begin{proof}
(1) Let $p\in\mathbb{P}$ be such that $p\equiv 3\mod 4$, $2^p-1\in\mathbb{P}$ and $(f,\mathsf{d}_K)=(5(2^p-1),40)$. Set $M=2^p-1$. Note that $M\equiv 7\mod 8$. This implies that $\pmb{\Big(}\frac{2}{M}\pmb{\Big)}=1$. Moreover, $M=4^{\frac{p-1}{2}}\cdot 2-1\equiv (-1)^{\frac{p-1}{2}}\cdot 2-1\equiv 2\mod 5$. Therefore, $\pmb{\Big(}\frac{5}{M}\pmb{\Big)}=\pmb{\Big(}\frac{M}{5}\pmb{\Big)}=\pmb{\Big(}\frac{2}{5}\pmb{\Big)}=-1$ by the quadratic reciprocity theorem. We infer that $\pmb{\Big(}\frac{\mathsf{d}_K}{M}\pmb{\Big)}=\pmb{\Big(}\frac{2}{M}\pmb{\Big)}^3\pmb{\Big(}\frac{5}{M}\pmb{\Big)}=-1$, and thus $M$ is inert. Clearly, $5$ is ramified. We find that $f$ is the product of an inert prime and a ramified prime. It follows from \cite[p. 22]{HK13a} that $|{\rm Pic}(\mathcal{O}_K)|=2$. Also note that $\varepsilon=3+\sqrt{10}$, ${\rm N}(\varepsilon)=-1$, $d=10$ and $f\prod_{p\in\mathbb{P},p\mid f}\left(1-\pmb{\Big(}\frac{\mathsf{d}_K}{p}\pmb{\Big)}\frac{1}{p}\right)=2^p\cdot 5$. It follows from \cite[Proposition 3.6]{Ch-Sa14} that $(\mathcal{O}_K^{\times}:\mathcal{O}_M^{\times})=2^p$. Observe that $\varepsilon\not\in\mathcal{O}_5$, and thus $(\mathcal{O}_K^{\times}:\mathcal{O}_5^{\times})=5$. This implies that $2^p\mid (\mathcal{O}_K^{\times}:\mathcal{O}_f^{\times})$ and $5\mid (\mathcal{O}_K^{\times}:\mathcal{O}_f^{\times})\mid 2^p\cdot 5$. Since $2^p$ and $5$ are coprime, we infer that $(\mathcal{O}_K^{\times}:\mathcal{O}_f^{\times})=2^p\cdot 5$, and hence $|{\rm Pic}(\mathcal{O}_f)|=|{\rm Pic}(\mathcal{O}_K)|\frac{f}{(\mathcal{O}_K^{\times}:\mathcal{O}_f^{\times})}\prod_{p\in\mathbb{P},p\mid f}\left(1-\pmb{\Big(}\frac{\mathsf{d}_K}{p}\pmb{\Big)}\frac{1}{p}\right)=2$. Obviously, $5\equiv 1\mod 4$ and $\pmb{\Big(}\frac{d/5}{5}\pmb{\Big)}=-1$. Consequently, $\min\Delta(\mathcal{O}_f)=2$ by \cite[Proposition 4.19]{Br-Ge-Re20}.

\bigskip
(2) Let $(f,\mathsf{d}_K)=(2190,365)$. Since $d=\mathsf{d}_K\equiv 5\mod 8$, we infer that $2$ is inert. Moreover, $\pmb{\Big(}\frac{\mathsf{d}_K}{3}\pmb{\Big)}=\pmb{\Big(}\frac{2}{3}\pmb{\Big)}=-1$, and thus $3$ is inert. Clearly, $5\mid 365$, $73\mid 365$ and $2190=2\cdot 3\cdot 5\cdot 73$. Therefore, $f$ is the product of two distinct inert primes and two distinct ramified primes. It follows from \cite[p. 23]{HK13a} that $|{\rm Pic}(\mathcal{O}_K)|=2$. Also note that $\varepsilon=\frac{19+\sqrt{365}}{2}$, $\mathcal{O}_f=\mathbb{Z}+1095\sqrt{365}\mathbb{Z}$ and $f\prod_{p\in\mathbb{P},p\mid f}\left(1-\pmb{\Big(}\frac{\mathsf{d}_K}{p}\pmb{\Big)}\frac{1}{p}\right)=4380$. Consequently, $(\mathcal{O}_K^{\times}:\mathcal{O}_f^{\times})$ is a divisor of $4380=2^2\cdot 3\cdot 5\cdot 73$. Moreover, $\varepsilon^{4380/2}-730\sqrt{365}\in\mathcal{O}_f$, $\varepsilon^{4380/3}-\frac{367+1095\sqrt{365}}{2}\in\mathcal{O}_f$, $\varepsilon^{4380/5}-219\sqrt{365}\in\mathcal{O}_f$ and $\varepsilon^{4380/73}-810\sqrt{365}\in\mathcal{O}_f$. Therefore, $\varepsilon^{4380/2},\varepsilon^{4380/3},\varepsilon^{4380/5},\varepsilon^{4380/73}\not\in\mathcal{O}_f$, and hence $(\mathcal{O}_K^{\times}:\mathcal{O}_f^{\times})=4380$. We infer that $|{\rm Pic}(\mathcal{O}_f)|=|{\rm Pic}(\mathcal{O}_K)|\frac{f}{(\mathcal{O}_K^{\times}:\mathcal{O}_f^{\times})}\prod_{p\in\mathbb{P},p\mid f}\left(1-\pmb{\Big(}\frac{\mathsf{d}_K}{p}\pmb{\Big)}\frac{1}{p}\right)=2$. Since $5\equiv 1\mod 4$ and $\pmb{\Big(}\frac{d/5}{5}\pmb{\Big)}=\pmb{\Big(}\frac{73}{5}\pmb{\Big)}=-1$, it follows from \cite[Proposition 4.19]{Br-Ge-Re20} that $\min\Delta(\mathcal{O}_f)=2$.
\end{proof}

\begin{lemma}\label{lemma 3.3}
Let $q,r,s\in\mathbb{P}$ be odd and distinct such that $d=qrs\equiv 1\mod 4$ and ${\rm N}(\varepsilon)=1$. Then there are some $p\in\{q,r,s\}$ and some $a,b\in\mathbb{Z}$ such that $|pa^2-\frac{\mathsf{d}_K}{p}b^2|=4$ and if $q\equiv r\equiv s\equiv 1\mod 4$, then $|{\rm Pic}(\mathcal{O}_K)|>2$.
\end{lemma}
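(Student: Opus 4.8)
The plan is to translate the Diophantine condition into a statement about ramified prime ideals and then to apply genus theory to the narrow class group. Since $d\equiv 1\bmod 4$ we have $\mathsf{d}_K=d=qrs$, and $q,r,s$ are exactly the ramified primes; write $\mathfrak{q},\mathfrak{r},\mathfrak{s}$ for the prime ideals of $\mathcal{O}_K$ above them, so that $\mathfrak{p}^2=(p)$ and ${\rm N}(\mathfrak{p})=p$ for $p\in\{q,r,s\}$. The first observation is that, for a ramified prime $p\mid d$, the existence of $a,b\in\mathbb{Z}$ with $|pa^2-\frac{\mathsf{d}_K}{p}b^2|=4$ is equivalent to $\mathfrak{p}$ being principal. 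Indeed, multiplying by $p$ turns the equation into $(pa)^2-db^2=\pm 4p$, and conversely any $x,y\in\mathbb{Z}$ with $x^2-dy^2=\pm 4p$ automatically satisfy $p\mid x$ (reduce modulo $p$) and $x\equiv y\bmod 2$ (reduce modulo $4$, using $d\equiv 1\bmod 4$), so that $\frac{x+y\sqrt{d}}{2}\in\mathcal{O}_K$ has norm $\pm p$ and generates the unique ideal of norm $p$, namely $\mathfrak{p}$; reversing the first step with $a=x/p$ and $b=y$ recovers the equation. Thus the whole lemma becomes a question about which of $\mathfrak{q},\mathfrak{r},\mathfrak{s}$ are principal.

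For the first claim I would work with the narrow class group. Because ${\rm N}(\varepsilon)=1$ there is no unit of norm $-1$, so the natural map from the narrow class group to ${\rm Pic}(\mathcal{O}_K)$ is two-to-one with kernel $\{1,c_0\}$, where $c_0$ is the narrow class of any principal ideal admitting a generator of negative norm; since ${\rm N}(\sqrt{d})=-d<0$ we may take $c_0=[(\sqrt{d})]^+\neq 1$. As $(\sqrt{d})=\mathfrak{q}\mathfrak{r}\mathfrak{s}$, this reads $[\mathfrak{q}]^+[\mathfrak{r}]^+[\mathfrak{s}]^+=c_0$. Genus theory gives that the $2$-torsion of the narrow class group has order $2^{t-1}=4$ (here $t=3$) and is generated by the classes of the ramified primes. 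Pushing forward to ${\rm Pic}(\mathcal{O}_K)$ kills $c_0$, so the subgroup generated by $[\mathfrak{q}],[\mathfrak{r}],[\mathfrak{s}]$ has order $4/2=2$ and the three classes multiply to $1$ there; in a group of order $2$, three elements with trivial product cannot all be nontrivial, so at least one of $\mathfrak{q},\mathfrak{r},\mathfrak{s}$ is principal. By the reduction above this produces the desired $p\in\{q,r,s\}$ and $a,b\in\mathbb{Z}$.

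For the second claim, assume in addition $q\equiv r\equiv s\equiv 1\bmod 4$ and use the genus characters $\chi_q,\chi_r,\chi_s$ of $\mathcal{O}_K$, whose value on a ramified prime $\mathfrak{p}$ dividing the discriminant part $D_1$ in a factorization $d=D_1D_2$ is $\left(\frac{D_2}{p}\right)$. A direct computation then gives $\chi_q(c_0)=\chi_q(\mathfrak{q}\mathfrak{r}\mathfrak{s})=\left(\frac{rs}{q}\right)\left(\frac{q}{r}\right)\left(\frac{q}{s}\right)=\left(\frac{r}{q}\right)\left(\frac{q}{r}\right)\left(\frac{s}{q}\right)\left(\frac{q}{s}\right)$, which collapses to $1$ by quadratic reciprocity; this is exactly where $q,r,s\equiv 1\bmod 4$ is used, so that each factor $\left(\frac{r}{q}\right)\left(\frac{q}{r}\right)$ is trivial. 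The same computation applies to $\chi_r$ and $\chi_s$. Hence $c_0$ lies in the common kernel of all genus characters, i.e.\ in the principal genus $({\rm Pic}^+(\mathcal{O}_K))^2$, while $c_0\neq 1$. Since the $2$-rank of the narrow class group is $t-1=2$, the relation $({\rm Pic}^+(\mathcal{O}_K))^2\neq\{1\}$ forces its $2$-part to be strictly larger than $(\mathbb{Z}/2\mathbb{Z})^2$, whence $|{\rm Pic}^+(\mathcal{O}_K)|\geq 8$; as $|{\rm Pic}^+(\mathcal{O}_K)|=2|{\rm Pic}(\mathcal{O}_K)|$ (again because ${\rm N}(\varepsilon)=1$), we conclude $|{\rm Pic}(\mathcal{O}_K)|\geq 4>2$.

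The main obstacle is the genus-theoretic bookkeeping: one must correctly invoke that the narrow $2$-torsion has order $2^{t-1}$ and is generated by the ramified primes, and pin down the genus-character values on the (non-coprime) ramified primes via $\chi_{D_1}(\mathfrak{p})=\left(\frac{D_2}{p}\right)$. Everything else is routine; the crux of the second claim is the reciprocity cancellation $\left(\frac{r}{q}\right)\left(\frac{q}{r}\right)=1$, which is available precisely under the hypothesis $q\equiv r\equiv s\equiv 1\bmod 4$ and fails in general, explaining why that hypothesis is indispensable.
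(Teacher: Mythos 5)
Your proof is correct, but it takes a genuinely different route from the paper's. The paper argues elementarily from the fundamental unit: writing $\varepsilon=\frac{u'}{2}+\frac{v'}{2}\sqrt{d}$, it factors ${u'}^2-d{v'}^2=4$ as $(u'+2)(u'-2)=d{v'}^2$, rules out $d\mid u'\pm 2$ by minimality of $\varepsilon$, and then uses coprimality to force a $1$--$2$ split of $\{q,r,s\}$ between the two factors, each quotient being a perfect square; this \emph{constructs} the pair $(a,b)$ explicitly. For the second claim it deduces $\left(\frac{q}{r}\right)=\left(\frac{q}{s}\right)=1$ from the equation and simply cites Brown's Theorem 5(b). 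You instead first observe (correctly, in both directions) that $|pa^2-\frac{d}{p}b^2|=4$ is solvable iff the ramified prime $\mathfrak{p}$ above $p$ is principal, and then run everything through the narrow class group: since ${\rm N}(\varepsilon)=1$, the kernel of the map to ${\rm Pic}(\mathcal{O}_K)$ is $\{1,c_0\}$ with $c_0=[(\sqrt{d})]^+=[\mathfrak{q}]^+[\mathfrak{r}]^+[\mathfrak{s}]^+\neq 1$, and the ambiguous-class bound $2^{t-1}=4$ forces at least one of the three ideals to become principal in ${\rm Pic}(\mathcal{O}_K)$; for the second claim you verify via quadratic reciprocity that $c_0$ is killed by every genus character, hence lies in the principal genus, so the $2$-Sylow of the narrow class group has rank $2$ and an element of order $4$, giving $|{\rm Pic}(\mathcal{O}_K)|\geq 4$. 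Your approach is less self-contained (it leans on the ambiguous class number formula and the principal genus theorem, and note that the ``squares are nontrivial'' step should be phrased as ``$c_0$ is a nontrivial $2$-torsion square, so the $2$-Sylow is not elementary abelian'' rather than merely $({\rm Cl}^+)^2\neq\{1\}$, which alone could come from odd torsion), but it unifies the two halves conceptually and replaces the citation of Brown by a transparent character computation; the paper's argument buys explicitness and avoids narrow class groups entirely. Both are valid.
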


\begin{proof}
Clearly, $\mathsf{d}_K=d$ and there are some $u^{\prime},v^{\prime}\in\mathbb{N}$ such that $u^{\prime}\equiv v^{\prime}\mod 2$ and $\varepsilon=\frac{u^{\prime}}{2}+\frac{v^{\prime}}{2}\sqrt{d}$. First let $u^{\prime}$ and $v^{\prime}$ be odd. Note that ${u^{\prime}}^2-d{v^{\prime}}^2=4$ and $(u^{\prime}+2)(u^{\prime}-2)=d{v^{\prime}}^2$. Assume that there is some $e\in\{-1,1\}$ such that $d\mid u^{\prime}+2e$. Then $(u^{\prime}-2e)\frac{u^{\prime}+2e}{d}={v^{\prime}}^2$ and $u^{\prime}-2e$ and $\frac{u^{\prime}+2e}{d}$ are coprime odd positive integers. Consequently, there are some odd $a,b\in\mathbb{N}$ such that $a^2=u^{\prime}-2e$ and $b^2=\frac{u^{\prime}+2e}{d}$. We infer that $|a^2-db^2|=4$ and $a<u^{\prime}$, which contradicts the fact that $\varepsilon$ is a fundamental unit of $\mathcal{O}_K$. Consequently, $d\nmid u^{\prime}+2$ and $d\nmid u^{\prime}-2$. Observe that there are some $p\in\{q,r,s\}$ and some $e\in\{-1,1\}$ such that $p\mid u^{\prime}+2e$ and $\frac{d}{p}\mid u^{\prime}-2e$. It follows that $\frac{u^{\prime}+2e}{p}\frac{u^{\prime}-2e}{d/p}={v^{\prime}}^2$ and $\frac{u^{\prime}+2e}{p}$ and $\frac{u^{\prime}-2e}{d/p}$ are coprime positive integers. This implies that there are some $a,b\in\mathbb{Z}$ such that $a^2=\frac{u^{\prime}+2e}{p}$ and $b^2=\frac{u^{\prime}-2e}{d/p}$. Obviously, $|pa^2-\frac{\mathsf{d}_K}{p}b^2|=4$.

Now let $u^{\prime}$ and $v^{\prime}$ be even. Set $u=\frac{u^{\prime}}{2}$ and $v=\frac{v^{\prime}}{2}$. Then $u,v\in\mathbb{N}$, $\varepsilon=u+v\sqrt{d}$ and $u^2-dv^2=1$. Note that $(u+1)(u-1)=dv^2$, $u$ is odd and $v$ is even (since $d\equiv 1\mod 4$). Assume that there is some $e\in\{-1,1\}$ such that $d\mid u+e$. Then $\frac{u-e}{2}\frac{u+e}{2d}=(\frac{v}{2})^2$ and $\frac{u-e}{2}$ and $\frac{u+e}{2d}$ are coprime positive integers. This implies that there are some $a,b\in\mathbb{N}$ such that $a^2=\frac{u-e}{2}$ and $b^2=\frac{u+e}{2d}$. Therefore, $|a^2-db^2|=1$ and $a<u$, which contradicts the fact that $\varepsilon$ is a fundamental unit of $\mathcal{O}_K$. We have that $d\nmid u+1$ and $d\nmid u-1$. Consequently, there are some $p\in\{q,r,s\}$ and some $e\in\{-1,1\}$ such that $p\mid u+e$ and $\frac{d}{p}\mid u-e$. Observe that $\frac{u+e}{2p}\frac{u-e}{2d/p}=(\frac{v}{2})^2$ and $\frac{u+e}{2p}$ and $\frac{u-e}{2d/p}$ are coprime positive integers. We infer that there are some $g,h\in\mathbb{Z}$ such that $g^2=\frac{u+e}{2p}$ and $h^2=\frac{u-e}{2d/p}$. Moreover, $|p(2g)^2-\frac{\mathsf{d}_K}{p}(2h)^2|=4$, and hence there are some $a,b\in\mathbb{Z}$ such that $|pa^2-\frac{\mathsf{d}_K}{p}b^2|=4$.

Finally, let $q\equiv r\equiv s\equiv 1\mod 4$. Without restriction let $|qa^2-\frac{\mathsf{d}_K}{q}b^2|=4$ for some $a,b\in\mathbb{Z}$. Then $\pmb{\Big(}\frac{q}{r}\pmb{\Big)}=\pmb{\Big(}\frac{q}{s}\pmb{\Big)}=1$. It follows from \cite[Theorem 5(b)]{Br74} that $|{\rm Pic}(\mathcal{O}_K)|>2$.
\end{proof}

Next we provide a refinement of Theorem~\ref{theorem 2.6}(2). If $f\in\mathbb{N}$, then let $\mathcal{I}_f$ be the set of all inert prime divisors of $f$ and let $\mathcal{L}_f$ be the set of all ramified prime divisors of $f$.

\begin{proposition}\label{proposition 3.4}
Let $\Omega=\{(|\mathcal{I}_f|,|\mathcal{L}_f|):f,d\in\mathbb{N}, d>1, d$ is squarefree and $\min\Delta(\mathcal{O}_f)>1\}$. Then $\Omega=\{(0,1),(0,2),(0,3),(1,1),(1,2),(2,1),(2,2)\}$.
\end{proposition}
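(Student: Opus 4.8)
The plan is to establish the two inclusions separately. For $\Omega\subseteq\{(0,1),(0,2),(0,3),(1,1),(1,2),(2,1),(2,2)\}$, I would start from Theorem~\ref{theorem 2.6}. Whenever $\min\Delta(\mathcal{O}_f)>1$, that theorem (together with \cite{Br-Ge-Re20}) guarantees that $f$ is divisible by a ramified prime and not by any split prime, so every prime divisor of $f$ is inert or ramified; hence $|\mathcal{I}_f|+|\mathcal{L}_f|=|\{p\in\mathbb{P}:p\mid f\}|$ and $|\mathcal{L}_f|\geq 1$. Theorem~\ref{theorem 2.6}(2) then gives $|\mathcal{I}_f|\leq 2$, $|\mathcal{L}_f|\leq 3$ and $|\mathcal{I}_f|+|\mathcal{L}_f|\leq 4$. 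These constraints leave exactly the seven claimed pairs together with $(1,3)$, so the entire content of the upper bound reduces to ruling out the pair $(1,3)$.

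Excluding $(1,3)$ is the heart of the argument, and I would run it as follows. Assume $(|\mathcal{I}_f|,|\mathcal{L}_f|)=(1,3)$; by Theorem~\ref{theorem 2.6}(1) we have $|{\rm Pic}(\mathcal{O}_K)|=2$. The three ramified prime divisors of $f$ all divide $\mathsf{d}_K$, so $t:=|\{p\in\mathbb{P}:p\mid\mathsf{d}_K\}|\geq 3$; Lemma~\ref{lemma 2.2} gives $2=|{\rm Pic}(\mathcal{O}_K)|\geq 2^{t-2}$, forcing $t=3$ and hence that every ramified prime divides $f$. The same lemma forces ${\rm N}(\varepsilon)=1$, since ${\rm N}(\varepsilon)=-1$ would give $|{\rm Pic}(\mathcal{O}_K)|\geq 2^{t-1}=4$. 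Next I would show the unique inert prime dividing $f$ must be $2$: if it were an odd prime $\ell$, then $\mathcal{O}_\ell$ is half-factorial by Theorem~\ref{theorem 2.6}(3), whence $|{\rm Pic}(\mathcal{O}_\ell)|=|{\rm Pic}(\mathcal{O}_K)|$ and Lemma~\ref{lemma 2.3} yields ${\rm N}(\varepsilon)=-1$, a contradiction. Thus $2$ is inert, which forces $d\equiv 5\bmod 8$, hence $d\equiv 1\bmod 4$, $\mathsf{d}_K=d$, and (as $t=3$) $d=qrs$ with distinct odd primes $q,r,s$, all ramified and dividing $f$. Now Lemma~\ref{lemma 3.3} produces some $p\in\{q,r,s\}$ and $a,b\in\mathbb{Z}$ with $|pa^2-\tfrac{\mathsf{d}_K}{p}b^2|=4$, directly contradicting condition~(c) of Theorem~\ref{theorem 2.9}. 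Hence $(1,3)\notin\Omega$.

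For the reverse inclusion I would realize each of the seven pairs by an explicit choice of squarefree $d$ and conductor $f$, verifying membership through Theorem~\ref{theorem 2.9}. The pairs $(1,1)$ and $(2,2)$ are already provided by Example~\ref{example 3.2}. The field $d=10$ (with $\mathsf{d}_K=40$, ${\rm N}(\varepsilon)=-1$ and $|{\rm Pic}(\mathcal{O}_K)|=2$) is a convenient source for most of the rest: $f=5$ gives $(0,1)$, $f=10$ gives $(0,2)$, and its inert primes (e.g. $7$ and $11$) can be adjoined to reach $(1,2)$ and $(2,1)$, where Corollary~\ref{corollary 2.10} lets one glue two $(1,1)$-examples sharing a common inert prime to produce $(1,2)$. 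In each case condition~(c) of Theorem~\ref{theorem 2.9} concerns only the ramified divisors and is checked by a short quadratic-residue computation, while $|{\rm Pic}(\mathcal{O}_f)|=2$ follows from the conductor formula once the index $(\mathcal{O}_K^\times:\mathcal{O}_f^\times)$ is computed. For $(0,3)$ one needs three ramified primes, hence a field with $d\equiv 2,3\bmod 4$ so that $2$ ramifies (e.g. $d=15$, $f=30$); the essential observation is that $d\not\equiv 1\bmod 4$ removes the hypothesis of Lemma~\ref{lemma 3.3}, so the Diophantine obstruction it produces need not occur and condition~(c) can hold simultaneously for all three ramified primes.

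The step I expect to be the main obstacle is the exclusion of $(1,3)$: it is the only candidate surviving the crude divisibility bounds, and eliminating it requires chaining the class-number inequalities of Lemma~\ref{lemma 2.2}, the half-factoriality together with the Lemma~\ref{lemma 2.3} dichotomy that pins the inert prime to $2$, and finally the Diophantine obstruction of Lemma~\ref{lemma 3.3} against Theorem~\ref{theorem 2.9}(c). A secondary, purely computational difficulty lies in the realizing examples, where one must confirm $|{\rm Pic}(\mathcal{O}_f)|=2$; here the index $(\mathcal{O}_K^\times:\mathcal{O}_f^\times)$ must exactly absorb the factors $(p+1)$ contributed by the inert primes, and a careless choice of $d$ can inflate the class number beyond $2$.
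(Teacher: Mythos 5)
Your upper bound and your exclusion of $(1,3)$ are exactly the paper's argument: Theorem~\ref{theorem 2.6}(2) and Theorem~\ref{theorem 2.9} reduce everything to ruling out $(1,3)$, and the chain Lemma~\ref{lemma 2.2} $\Rightarrow$ ($t=3$ and ${\rm N}(\varepsilon)=1$), then Lemma~\ref{lemma 2.3} (via Proposition~\ref{proposition 3.1}(2)) pinning the inert prime to $2$, hence $d\equiv 5\bmod 8$ and $d=qrs$, then Lemma~\ref{lemma 3.3} against Theorem~\ref{theorem 2.9}(c), is precisely what the paper does. That part is correct.

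The gap is in your realization of $(2,1)$. If two distinct inert primes divide $f$, then by Theorem~\ref{theorem 2.6}(3) the order $\mathcal{O}_g$ (with $g$ the product of the inert prime divisors of $f$) is half-factorial, and \cite[Proposition 4.15]{Br-Ge-Re20} forces $g\in\{1\}\cup\mathbb{P}\cup\{2p:p\in\mathbb{P}\setminus\{2\}\}$; so one of the two inert primes must be $2$, i.e. $d\equiv 5\bmod 8$. In $\mathbb{Q}(\sqrt{10})$ the prime $2$ is ramified, so adjoining the odd inert primes $7$ and $11$ to a conductor over $d=10$ can never produce the pair $(2,1)$ --- the product $7\cdot 11$ is not of the admissible form and $\mathcal{O}_{77}$ would fail to be half-factorial. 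The pairs $(2,1)$ and $(2,2)$ must be realized in a field with $d\equiv 5\bmod 8$; the paper gets both from $d=365$, $f=2190=2\cdot 3\cdot 5\cdot 73$ in Example~\ref{example 3.2}(2), with $(2,1)$ obtained by passing to the divisor $h=30$ via Theorem~\ref{theorem 2.6}(3). A secondary (lesser) issue is that your remaining realizations are only sketched: membership of $70$ or $14$ in $D_{10}$, and condition (c) of Theorem~\ref{theorem 2.9} for $d=15$, $f=30$, still require the class-number and Diophantine verifications you defer; the paper outsources these to \cite[Example 4.22]{Br-Ge-Re20} and Example~\ref{example 3.2}(2) rather than reproving them.
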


\begin{proof}
Note that $\Omega\subseteq\{(0,1),(0,2),(0,3),(1,1),(1,2),(2,1),(2,2),(1,3)\}$ by Theorems~\ref{theorem 2.6}(2) and~\ref{theorem 2.9}. Moreover, $\{(0,1),(0,2),(0,3),(1,1),(1,2),(2,1),(2,2)\}\subseteq\Omega$ by \cite[Example 4.22]{Br-Ge-Re20} and Example~\ref{example 3.2}(2). It remains to show that $(1,3)\not\in\Omega$. Assume that $(1,3)\in\Omega$. Then there are some $f,d\in\mathbb{N}$ such that $d>1$, $d$ is squarefree, $\min\Delta(\mathcal{O}_f)>1$ and $f$ is the product of an inert prime and three distinct ramified primes. Note that $|{\rm Pic}(\mathcal{O}_f)|=|{\rm Pic}(\mathcal{O}_K)|=2$ by Theorem~\ref{theorem 2.9}. It follows from Lemma~\ref{lemma 2.2} that $|\{p\in\mathbb{P}:p\mid\mathsf{d}_K\}|=|\{p\in\mathbb{P}:p\mid f,p$ is ramified$\}|=3$ and ${\rm N}(\varepsilon)=1$. We infer by Lemma~\ref{lemma 2.3} and Proposition~\ref{proposition 3.1}(2) that $\{p\in\mathbb{P}:p\mid f,p$ is inert$\}=\{2\}$, and hence $\mathsf{d}_K=d\equiv 5\mod 8$ and $f=2d$. Moreover, there are some odd distinct $q,r,s\in\mathbb{P}$ such that $d=qrs$. We infer by Theorem~\ref{theorem 2.9} that for each $p\in\{q,r,s\}$ and each $a,b\in\mathbb{Z}$, $|pa^2-\frac{\mathsf{d}_K}{p}b^2|\not=4$, which contradicts Lemma~\ref{lemma 3.3}.
\end{proof}

\begin{lemma}\label{lemma 3.5}
Let $f\in\mathbb{N}$ and let $p\in\mathbb{P}$ be ramified with ${\rm v}_p(f)=1$ and such that for each $\alpha\in\{-1,1\}$, $(p$ is odd and $\pmb{\Big(}\frac{\alpha d/p}{p}\pmb{\Big)}=-1)$ or there is some odd $q\in\mathbb{P}$ such that $q\mid df$ and $\pmb{\Big(}\frac{-\alpha p}{q}\pmb{\Big)}=-1$. Then for each $I\in\mathcal{A}(\mathcal{I}^*_p(\mathcal{O}_f))$ with ${\rm N}(I)=p^3$, $I$ is not principal.
\end{lemma}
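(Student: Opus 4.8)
The statement concerns an atom $I \in \mathcal{A}(\mathcal{I}^*_p(\mathcal{O}_f))$ of norm $p^3$, where $p$ is a ramified prime dividing $f$ exactly once. I want to show $I$ cannot be principal under the stated sign hypotheses. The strategy is to assume, for contradiction, that $I$ is principal, translate principality into an explicit norm equation, and then derive a contradiction with the Kronecker-symbol hypotheses by a residue-class argument modulo $p$ and modulo the auxiliary odd prime $q$.

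First I would make principality explicit. Using the description of atoms of $\mathcal{I}^*_p(\mathcal{O}_f)$ of norm $p^3$ from \cite[Theorem 3.6]{Br-Ge-Re20} (as already invoked in Corollaries~\ref{corollary 2.7} and~\ref{corollary 2.8}), such an $I$ has a generator whose norm, up to sign, equals ${\rm N}(I) = p^3$. Concretely, if $I$ were principal, there would exist $a,b \in \mathbb{Z}$ with a norm equation of the shape $|p a^2 - \tfrac{\mathsf{d}_K}{p} b^2| = p^2$ (after factoring out the appropriate power of $p$ forced by ${\rm v}_p(f)=1$), equivalently $p \mid b$ is excluded and the equation reduces modulo $p$ to a statement about whether $\tfrac{\mathsf{d}_K}{p}$ (or a signed variant $\pm \tfrac{d}{p}$) is a quadratic residue mod $p$. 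This is exactly the quantity appearing in the first branch of the hypothesis, $\bigl(\tfrac{\alpha d/p}{p}\bigr) = -1$. So in the case $p$ is odd, the hypothesis in its first branch directly blocks solvability of the norm equation modulo $p$, giving the contradiction.

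The subtler branch is the second one: there exists an odd $q \mid df$ with $\bigl(\tfrac{-\alpha p}{q}\bigr) = -1$. Here $q$ may be an inert prime dividing $f$, or an odd prime dividing $d$ (hence a ramified prime), or possibly $q = p$ itself is excluded since we need a separate modulus. The idea is to reduce the same norm equation modulo $q$: from $|p a^2 - \tfrac{\mathsf{d}_K}{p} b^2| = p^2$ one reads off, modulo $q$, a congruence forcing $p a^2 \equiv \tfrac{\mathsf{d}_K}{p} b^2$ or the signed variant, and since $q \mid \mathsf{d}_K$ (when $q$ is ramified) or one controls the residue of $\tfrac{\mathsf{d}_K}{p}$ modulo $q$ (when $q$ is inert and $q \nmid \mathsf{d}_K$), this collapses to requiring $\bigl(\tfrac{-\alpha p}{q}\bigr) = 1$, contradicting the hypothesis. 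The sign parameter $\alpha \in \{-1,1\}$ tracks the two possible signs of the norm ($p a^2 - \cdots = \pm p^2$), and the hypothesis is phrased as ``for each $\alpha$'' precisely so that whichever sign actually occurs is blocked.

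The main obstacle I anticipate is bookkeeping the interaction between the two moduli and the sign $\alpha$, together with the distinction $d \equiv 1 \bmod 4$ versus $d \not\equiv 1 \bmod 4$ (which changes $\mathsf{d}_K$ between $d$ and $4d$ and shifts the norm equation by a factor of $4$, as seen in the two CASES of the proof of Theorem~\ref{theorem 2.9}). In particular, when $p = 2$, only the second branch can apply, and the power of $2$ in $\mathsf{d}_K$ must be handled carefully. I would therefore first treat $p$ odd with the quadratic-residue branch, which is quick; then handle the auxiliary-prime branch uniformly by reducing the norm equation modulo $q$ and invoking quadratic reciprocity to convert $\bigl(\tfrac{-\alpha p}{q}\bigr)$ into the form dictated by the equation; and finally verify that the sign $\alpha$ forced by the actual norm value is the one the hypothesis rules out. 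The claim that $I$ has norm exactly $p^3$ (rather than $p^2$) is what guarantees the relevant $p$-adic valuation in the generator, and this is where \cite[Theorem 3.6]{Br-Ge-Re20} is essential.
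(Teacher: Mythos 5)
Your overall strategy --- assume $I$ is principal, write down the norm equation coming from the explicit generators in \cite[Theorem 3.6]{Br-Ge-Re20}, and contradict the hypothesis by reducing that equation modulo $p$ (first branch) and modulo the auxiliary prime $q$ (second branch) --- is exactly the paper's. However, the norm equation you display is wrong, and both of your reductions fail for the equation as you state it. Writing a generator of $I$ (for $p$ odd) as $p^3a+p^2kb+\frac{p^2\beta+f\sqrt{\mathsf{d}_K}}{2}b$ and taking norms gives $p^4m^2-f^2n^2d=-4p^3\alpha$ for some $m,n\in\mathbb{Z}$ and $\alpha\in\{-1,1\}$; dividing by $p^3$ (using ${\rm v}_p(f)=1$ and $p\mid d$) yields
\[
pm^2-\Big(\tfrac{fn}{p}\Big)^2\tfrac{d}{p}=-4\alpha,
\]
so the right-hand side is $\pm 4$, not $\pm p^2$, and the coefficient of $d/p$ carries the square factor $(f/p)^2$. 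Both features are essential. With your right-hand side $\pm p^2$, reduction modulo $p$ merely forces $p\mid b$ rather than producing the residue identity $\big(\tfrac{-d/p}{p}\big)=\big(\tfrac{-\alpha}{p}\big)$ (equivalently $\big(\tfrac{\alpha d/p}{p}\big)=1$), so the first branch yields no contradiction from your equation.

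The second branch is where the missing factor of $f$ really hurts: the prime $q$ furnished by the hypothesis may divide $f$ but not $d$ (e.g.\ an inert divisor of $f$), in which case $q\nmid\mathsf{d}_K/p$ and your equation gives no usable congruence modulo $q$; your remark that one ``controls the residue of $\mathsf{d}_K/p$ modulo $q$'' is not substantiated and is not how the argument works. The correct mechanism is simply that $q\mid df$ kills the entire term $f^2n^2d$ modulo $q$ in the unreduced equation, whence $p^4m^2\equiv-4p^3\alpha\pmod q$ and $\big(\tfrac{-\alpha p}{q}\big)=1$, contradicting $\big(\tfrac{-\alpha p}{q}\big)=-1$; no quadratic reciprocity is needed. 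For $p=2$ the paper moreover uses the two auxiliary primes simultaneously: from $\alpha=-1$ and $\alpha=1$ one gets odd $q,r\mid df$ with $\big(\tfrac{2}{q}\big)=\big(\tfrac{-2}{r}\big)=-1$, the single equation $(8a+2bk)^2-b^2f^2d=8\alpha$ reduced modulo $q$ pins down $\alpha=-1$, and reduction modulo $r$ then gives $\big(\tfrac{-2}{r}\big)=1$, a contradiction. So the plan is the right one, but the equation at its core must be corrected before either reduction can actually be carried out.
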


\begin{proof}
Let $\beta\in\{0,1\}$ be such that $f\mathsf{d}_K\equiv\beta\mod 2$. If $p$ is odd, then $\{I\in\mathcal{A}(\mathcal{I}^*_p(\mathcal{O}_f)):{\rm N}(I)=p^3\}=\{p^3\mathbb{Z}+\left(p^2k+\frac{p^2\beta+f\sqrt{\mathsf{d}_K}}{2}\right)\mathbb{Z}:k\in [0,p-1]\}$ by \cite[Theorem 3.6]{Br-Ge-Re20}. If $p=2$, then $\{I\in\mathcal{A}(\mathcal{I}^*_p(\mathcal{O}_f)):{\rm N}(I)=p^3\}=\{8\mathbb{Z}+(2k+f\sqrt{d})\mathbb{Z}:k\in [0,3],k\equiv d\mod 2\}$ by \cite[Theorem 3.6]{Br-Ge-Re20}. Assume that there is some $I\in\mathcal{A}(\mathcal{I}^*_p(\mathcal{O}_f))$ such that ${\rm N}(I)=p^3$ and $I$ is principal.

\medskip
CASE 1: $p$ is odd. There is some $k\in [0,p-1]$ such that $I=p^3\mathbb{Z}+\left(p^2k+\frac{p^2\beta+f\sqrt{\mathsf{d}_K}}{2}\right)\mathbb{Z}$. Since $I$ is principal, there are some $a,b\in\mathbb{Z}$ with $I=\left(p^3a+p^2kb+\frac{p^2\beta+f\sqrt{\mathsf{d}_K}}{2}b\right)\mathcal{O}_f$, and hence $p^3={\rm N}(I)=\frac{1}{4}|p^4(2pa+2bk+b\beta)^2-f^2b^2\mathsf{d}_K|$. Consequently, there are some $\alpha\in\{-1,1\}$ and some $m,n\in\mathbb{Z}$ such that $p^4m^2-f^2n^2d=-4p^3\alpha$. It follows that $pm^2-\frac{f^2n^2}{p^2}\frac{d}{p}=-4\alpha$, and hence $\pmb{\Big(}\frac{-d/p}{p}\pmb{\Big)}=\pmb{\Big(}\frac{(f^2n^2/p^2)(-d/p)}{p}\pmb{\Big)}=\pmb{\Big(}\frac{-4\alpha}{p}\pmb{\Big)}=\pmb{\Big(}\frac{-\alpha}{p}\pmb{\Big)}$. We infer that $\pmb{\Big(}\frac{\alpha d/p}{p}\pmb{\Big)}=1$. This implies that there is some odd $q\in\mathbb{P}$ such that $q\mid df$ and $\pmb{\Big(}\frac{-\alpha p}{q}\pmb{\Big)}=-1$. Since $p^4m^2-f^2n^2d=-4p^3\alpha$, we have $\pmb{\Big(}\frac{-\alpha p}{q}\pmb{\Big)}=\pmb{\Big(}\frac{-4p^3\alpha}{q}\pmb{\Big)}=1$, a contradiction.

\medskip
CASE 2: $p=2$. There is some $k\in [0,3]$ such that $I=8\mathbb{Z}+(2k+f\sqrt{d})\mathbb{Z}$. Moreover, there are some odd $q,r\in\mathbb{P}$ such that $q,r\mid df$ and $\pmb{\Big(}\frac{2}{q}\pmb{\Big)}=\pmb{\Big(}\frac{-2}{r}\pmb{\Big)}=-1$. Since $I$ is principal, there are some $a,b\in\mathbb{Z}$ with $I=(8a+2kb+bf\sqrt{d})\mathcal{O}_f$, and thus $8={\rm N}(I)=|(8a+2bk)^2-b^2f^2d|$. We infer that there is some $\alpha\in\{-1,1\}$ such that $(8a+2bk)^2-b^2f^2d=8\alpha$. It follows that $\pmb{\Big(}\frac{2\alpha}{q}\pmb{\Big)}=\pmb{\Big(}\frac{2\alpha}{r}\pmb{\Big)}=1$, and hence $\pmb{\Big(}\frac{\alpha}{q}\pmb{\Big)}=\pmb{\Big(}\frac{2}{q}\pmb{\Big)}=-1$. Therefore, $\alpha=-1$, and so $\pmb{\Big(}\frac{-2}{r}\pmb{\Big)}=1$, a contradiction.
\end{proof}

\begin{proposition}\label{proposition 3.6}
Let $f\in\mathbb{N}$ be squarefree such that $f$ is divisible by a ramified prime, $f$ is not divisible by a split prime and $|{\rm Pic}(\mathcal{O}_f)|=|{\rm Pic}(\mathcal{O}_K)|=2$. If for each $\alpha\in\{-1,1\}$ and each ramified $p\in\mathbb{P}$ with $p\mid f$, $(p$ is odd and $\pmb{\Big(}\frac{\alpha d/p}{p}\pmb{\Big)}=-1)$ or $($there is some odd $q\in\mathbb{P}$ such that $q\mid df$ and $\pmb{\Big(}\frac{-\alpha p}{q}\pmb{\Big)}=-1)$, then $\min\Delta(\mathcal{O}_f)=2$.
\end{proposition}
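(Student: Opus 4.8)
The plan is to deduce Proposition~\ref{proposition 3.6} directly from the characterization in Theorem~\ref{theorem 2.9}, using Lemma~\ref{lemma 3.5} as the essential bridge. The hypotheses of the proposition already hand us conditions (a) and (b) of Theorem~\ref{theorem 2.9} verbatim: we are given that $f$ is squarefree, divisible by a ramified prime, not divisible by a split prime, and that $|{\rm Pic}(\mathcal{O}_f)|=|{\rm Pic}(\mathcal{O}_K)|=2$. So the entire task reduces to verifying condition (c), namely that $|pa^2-\frac{\mathsf{d}_K}{p}b^2|\neq 4$ for every ramified $p\mid f$ and all $a,b\in\mathbb{Z}$; once (c) holds, Theorem~\ref{theorem 2.9} gives $\min\Delta(\mathcal{O}_f)>1$, and since we always have $\min\Delta(\mathcal{O}_f)\in\{0,1,2\}$ for quadratic orders (as recalled in the introduction, from \cite{Br-Ge-Re20}), this forces $\min\Delta(\mathcal{O}_f)=2$.

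First I would fix an arbitrary ramified prime $p$ with $p\mid f$ and the hypothesis on $p$, namely that for each $\alpha\in\{-1,1\}$ either ($p$ is odd and $\pmb{\Big(}\frac{\alpha d/p}{p}\pmb{\Big)}=-1$) or there is an odd $q\in\mathbb{P}$ with $q\mid df$ and $\pmb{\Big(}\frac{-\alpha p}{q}\pmb{\Big)}=-1$. This is precisely the hypothesis of Lemma~\ref{lemma 3.5} (for this $f$ and $p$, noting that $f$ squarefree forces ${\rm v}_p(f)=1$). Applying Lemma~\ref{lemma 3.5} yields that every $I\in\mathcal{A}(\mathcal{I}^*_p(\mathcal{O}_f))$ with ${\rm N}(I)=p^3$ is non-principal. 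The next step is to translate this non-principality statement into the Diophantine non-representability required by condition (c). By \cite[Theorem 3.6]{Br-Ge-Re20}, the atoms of norm $p^3$ in $\mathcal{I}^*_p(\mathcal{O}_f)$ are explicitly parametrized (for $p$ odd by $p^3\mathbb{Z}+(p^2k+\frac{p^2\beta+f\sqrt{\mathsf{d}_K}}{2})\mathbb{Z}$, and for $p=2$ by $8\mathbb{Z}+(2k+f\sqrt d)\mathbb{Z}$), and the principality of such an atom is equivalent, via the norm form, to the solvability of $|p(2pa+b\beta)^2-(\frac{f}{p})^2\frac{\mathsf{d}_K}{p}b^2|=4$ (odd case) or $|2a^2-(\frac{f}{2})^2\frac{\mathsf{d}_K}{2}b^2|=4$ (case $p=2$). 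These are exactly the equations appearing in Corollary~\ref{corollary 2.8}(c),(d).

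The cleanest route, therefore, is to invoke Corollary~\ref{corollary 2.8} rather than re-derive these equivalences: Lemma~\ref{lemma 3.5} establishes that the relevant norm-$p^3$ atoms are non-principal, which is precisely conditions (c) and (d) of Corollary~\ref{corollary 2.8} (since, as shown in the proof of that corollary, the listed atom $A_p$ is principal iff the displayed equation has a solution, and the norm-$8$ atoms collectively contain a principal ideal iff $|2a^2-(\frac{f}{2})^2\frac{\mathsf{d}_K}{2}b^2|=4$ is solvable). Combined with conditions (a) and (b), which hold by hypothesis, Corollary~\ref{corollary 2.8} then delivers $\min\Delta(\mathcal{O}_f)>1$, hence $\min\Delta(\mathcal{O}_f)=2$. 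I expect the main subtlety to lie not in any deep argument but in the bookkeeping that matches the hypothesis of Lemma~\ref{lemma 3.5} to the atoms of norm $p^3$ and then to the equations of Corollary~\ref{corollary 2.8}; in particular one must handle the $p=2$ ramified case (where $\mathsf{d}_K=4d$ and the parametrization by $k\in[0,3]$ with $k\equiv d\bmod 2$ differs from the odd case) in parallel with the odd case, and confirm that squarefreeness of $f$ guarantees ${\rm v}_p(f)=1$ so that the norm-$p^3$ atoms are exactly the ones governing non-principality.
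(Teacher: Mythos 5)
Your proof is correct and essentially the paper's own, which is a one-line appeal to Corollary~\ref{corollary 2.7}, Lemma~\ref{lemma 3.5} and \cite[Theorem 4.14]{Br-Ge-Re20}: Lemma~\ref{lemma 3.5} makes every norm-$p^3$ atom non-principal, the characterization then gives $\min\Delta(\mathcal{O}_f)>1$, and $\min\Delta(\mathcal{O}_f)\in\{0,1,2\}$ forces the value $2$. The only differences are cosmetic: you route through the Diophantine form (Corollary~\ref{corollary 2.8}) rather than the ideal-theoretic form (Corollary~\ref{corollary 2.7}), which matches Lemma~\ref{lemma 3.5} directly, and your opening idea of verifying condition (c) of Theorem~\ref{theorem 2.9} would not have worked as stated (Lemma~\ref{lemma 3.5} only yields the conductor-laden equations of Corollary~\ref{corollary 2.8}, not the conductor-free equation of Theorem~\ref{theorem 2.9}(c)) --- but you correctly abandon that in favour of Corollary~\ref{corollary 2.8}.
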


\begin{proof}
This is an immediate consequence of Corollary~\ref{corollary 2.7}, Lemma~\ref{lemma 3.5} and \cite[Theorem 4.14]{Br-Ge-Re20}.
\end{proof}

Next we want to emphasize that Proposition~\ref{proposition 3.6} is sharper than \cite[Proposition 4.19]{Br-Ge-Re20}.

\begin{example}\label{example 3.7}
If $(f,\mathsf{d}_K)=(3,168)$, then for each $p\in\mathbb{P}$ with $p\mid df$, $p\not\equiv 1\mod 4$ and $\min\Delta(\mathcal{O}_f)=2$.
\end{example}

\begin{proof}
Let $(f,\mathsf{d}_K)=(3,168)$. The first statement is clearly true. Moreover, $d=42$, $f$ is a ramified prime, $|{\rm Pic}(\mathcal{O}_K)|=2$ by \cite[p. 22]{HK13a} and $\varepsilon=13+2\sqrt{42}$. Clearly, $\varepsilon\not\in\mathcal{O}_f$ and $f\prod_{p\in\mathbb{P},p\mid f}\left(1-\pmb{\Big(}\frac{\mathsf{d}_K}{p}\pmb{\Big)}\frac{1}{p}\right)=3$, and thus $(\mathcal{O}_K^{\times}:\mathcal{O}_f^{\times})=3$ and $|{\rm Pic}(\mathcal{O}_f)|=|{\rm Pic}(\mathcal{O}_K)|=2$. If $\alpha=1$, then $f$ is odd and $\pmb{\Big(}\frac{\alpha d/f}{f}\pmb{\Big)}=-1$. If $\alpha=-1$, then $7$ is an odd prime divisor of $df$ and $\pmb{\Big(}\frac{-\alpha f}{7}\pmb{\Big)}=-1$. Consequently, $\min\Delta(\mathcal{O}_f)=2$ by Proposition~\ref{proposition 3.6}.
\end{proof}

\begin{lemma}\label{lemma 3.8}
Let ${\rm N}(\varepsilon)=-1$ and suppose that one of the following conditions is satisfied:
\begin{enumerate}
\item[(a)] There is some $p\in\mathbb{P}$ with $p\equiv 1\mod 8$ and $d=2p$.
\item[(b)] There are some $p,q\in\mathbb{P}$ with $p\equiv q\equiv 1\mod 4$, $\pmb{\Big(}\frac{p}{q}\pmb{\Big)}=1$ and $d=pq$.
\end{enumerate}
Then $|{\rm Pic}(\mathcal{O}_K)|>2$.
\end{lemma}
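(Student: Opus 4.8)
The plan is to deduce $|{\rm Pic}(\mathcal{O}_K)|>2$ from classical genus-theoretic lower bounds, treating the two cases separately but with the same overall strategy: since ${\rm N}(\varepsilon)=-1$, the sharper bound in Lemma~\ref{lemma 2.2} applies. In both cases $d$ is a product of two primes, so $\mathsf{d}_K$ has a known number $t$ of prime divisors, and Lemma~\ref{lemma 2.2} already gives $|{\rm Pic}(\mathcal{O}_K)|\geq 2^{t-1}$. This handles the crude divisibility but is not by itself enough to force the class number strictly above $2$ in the borderline cases, so the congruence and quadratic-residue hypotheses must be used to sharpen the estimate. I expect to invoke the results of Brown~\cite{Br74} that were already used in the proof of Lemma~\ref{lemma 3.3}, since those give precise criteria (in terms of Legendre symbols between the prime factors of $d$) for when $|{\rm Pic}(\mathcal{O}_K)|>2$.

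For case (b), where $d=pq$ with $p\equiv q\equiv 1\bmod 4$ and $\pmb{\big(}\frac{p}{q}\pmb{\big)}=1$, I would argue as follows. Here $\mathsf{d}_K=d=pq$ has $t=2$ prime divisors, so Lemma~\ref{lemma 2.2} gives only $|{\rm Pic}(\mathcal{O}_K)|\geq 2$, which is not enough. The hypothesis $\pmb{\big(}\frac{p}{q}\pmb{\big)}=1$ (equivalently $\pmb{\big(}\frac{q}{p}\pmb{\big)}=1$ by reciprocity, since $p\equiv q\equiv 1\bmod 4$) is exactly the kind of condition appearing in Brown's \cite[Theorem 5(b)]{Br74} that was cited at the end of Lemma~\ref{lemma 3.3} to conclude $|{\rm Pic}(\mathcal{O}_K)|>2$. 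So the main step is to check that the hypotheses of the relevant part of \cite{Br74} are met and read off the conclusion. This is the case that most directly parallels the final paragraph of Lemma~\ref{lemma 3.3}.

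For case (a), where $d=2p$ with $p\equiv 1\bmod 8$, I would note that $d\equiv 2\bmod 4$, so $\mathsf{d}_K=4d=8p$ and $t=2$. Again Lemma~\ref{lemma 2.2} alone gives only $|{\rm Pic}(\mathcal{O}_K)|\geq 2$, so the congruence $p\equiv 1\bmod 8$ must be exploited. Since $p\equiv 1\bmod 8$ we have $\pmb{\big(}\frac{2}{p}\pmb{\big)}=1$, and I would use this together with ${\rm N}(\varepsilon)=-1$ to invoke the appropriate case of Brown's criteria \cite{Br74} (analogous to how Lemma~\ref{lemma 2.4} used \cite[Theorem 1]{Br74} for $d=2p$ with $p\equiv 1\bmod 4$) to push the $2$-rank of the class group high enough, or equivalently to exhibit a nontrivial relation forcing a cyclic factor of order at least $4$ or an extra factor of $2$. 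The condition $p\equiv 1\bmod 8$ rather than merely $p\equiv 1\bmod 4$ is precisely what distinguishes this from the generic genus-theory bound and supplies the needed strict inequality.

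The hard part will be pinning down exactly which statement in \cite{Br74} applies in case (a): genus theory gives the $2^{t-1}$ bound uniformly, but forcing $|{\rm Pic}(\mathcal{O}_K)|>2$ strictly (rather than $\geq 2$) requires either a $4$-rank computation or a capitulation/norm-residue argument keyed to the splitting of $2$ and the value $\pmb{\big(}\frac{2}{p}\pmb{\big)}=1$. I expect the cleanest route is to verify that in both (a) and (b) the hypotheses match a single criterion in \cite{Br74} (the same one used in Lemma~\ref{lemma 3.3}), reducing the whole proof to confirming the relevant Legendre-symbol conditions and citing that result, rather than redeveloping genus theory from scratch.
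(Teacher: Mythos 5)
Your proposal takes essentially the same route as the paper, whose entire proof is the one-line citation ``This is an immediate consequence of \cite[Theorems 1 and 2]{Br74}'' --- Theorem 1 covering case (a) with $d=2p$ and Theorem 2 covering case (b) with $d=pq$, keyed exactly to the congruence and Legendre-symbol hypotheses you identify. The only correction: Brown's Theorem 5(b), which you suggest for case (b), is the three-prime-discriminant criterion used in Lemma~\ref{lemma 3.3}; for a two-prime discriminant $d=pq$ the relevant statement is Theorem 2.
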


\begin{proof}
This is an immediate consequence of \cite[Theorems 1 and 2]{Br74}.
\end{proof}

\begin{theorem}\label{theorem 3.9}
Let $f\in\mathbb{N}$ and let ${\rm N}(\varepsilon)=-1$. Then $\min\Delta(\mathcal{O}_f)>1$ if and only if the following conditions are satisfied:
\begin{enumerate}
\item[(a)] $|{\rm Pic}(\mathcal{O}_f)|=|{\rm Pic}(\mathcal{O}_K)|=2$.
\item[(b)] $f$ is squarefree, $f$ is divisible by a ramified prime and $f$ is not divisible by a split prime.
\end{enumerate}
\end{theorem}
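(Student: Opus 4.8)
The plan is to treat the two implications separately. The forward direction is immediate: by Theorem~\ref{theorem 2.9}, $\min\Delta(\mathcal{O}_f)>1$ already forces conditions (a) and (b) (indeed it forces more), so nothing remains to prove there. All the work is in the converse, so assume (a), (b) and ${\rm N}(\varepsilon)=-1$, and aim to show $\min\Delta(\mathcal{O}_f)>1$. Rather than verify condition (c) of Theorem~\ref{theorem 2.9} by hand, I would verify the (stronger) hypothesis of Proposition~\ref{proposition 3.6}, which yields $\min\Delta(\mathcal{O}_f)=2$ directly; note that (a) and (b) are exactly the standing assumptions of that proposition.

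First I would pin down the shape of $d$. Writing $t=|\{p\in\mathbb{P}:p\mid\mathsf{d}_K\}|$, Lemma~\ref{lemma 2.2} together with $|{\rm Pic}(\mathcal{O}_K)|=2$ and ${\rm N}(\varepsilon)=-1$ gives $2\geq 2^{t-1}$, hence $t\leq 2$; the case $t=1$ is impossible, since then the genus-theoretic input behind Lemma~\ref{lemma 2.2} forces $|{\rm Pic}(\mathcal{O}_K)|$ to be odd (for $d=2$ one even has $|{\rm Pic}(\mathcal{O}_K)|=1$). Thus $t=2$. Since ${\rm N}(\varepsilon)=-1$ rules out any prime factor $\equiv 3\bmod 4$ of $d$ and forces $d\not\equiv 3\bmod 4$, squarefreeness and $t=2$ leave exactly two possibilities: $d=p_1p_2$ with distinct primes $p_1,p_2\equiv 1\bmod 4$ and $\mathsf{d}_K=d$, or $d=2p$ with $p\equiv 1\bmod 4$ and $\mathsf{d}_K=8p$.

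Next I would extract the extra congruence conditions from $|{\rm Pic}(\mathcal{O}_K)|=2$ via the contrapositive of Lemma~\ref{lemma 3.8}: in the case $d=p_1p_2$ this gives $\left(\frac{p_1}{p_2}\right)=-1$ (equivalently $\left(\frac{p_2}{p_1}\right)=-1$, by reciprocity), and in the case $d=2p$ it gives $p\equiv 5\bmod 8$. With these in hand, the Kronecker condition of Proposition~\ref{proposition 3.6} becomes a short reciprocity check, using $\left(\frac{-1}{q}\right)=1$ for $q\equiv 1\bmod 4$ and $\left(\frac{2}{p}\right)=-1$ for $p\equiv 5\bmod 8$. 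For an odd ramified $p\mid f$, alternative~(i) holds for both $\alpha\in\{-1,1\}$: if $d=p_1p_2$ and $p=p_1$, then $d/p=p_2$ and $\left(\frac{\alpha p_2}{p_1}\right)=\left(\frac{p_2}{p_1}\right)=-1$, while if $d=2p$, then $d/p=2$ and $\left(\frac{2\alpha}{p}\right)=-1$. For the ramified prime $2\mid f$ (which can occur only when $d=2p$), alternative~(ii) holds with $q=p$, since $\left(\frac{-2\alpha}{p}\right)=-1$ for both signs. Hence Proposition~\ref{proposition 3.6} applies and $\min\Delta(\mathcal{O}_f)=2>1$.

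The main obstacle is the structural analysis in the previous two steps: determining the exact form of $d$ and harvesting the reciprocity constraints $\left(\frac{p_1}{p_2}\right)=-1$ respectively $p\equiv 5\bmod 8$ from the hypothesis $|{\rm Pic}(\mathcal{O}_K)|=2$. The subtlest single point is excluding $t=1$, which is not delivered by the inequality in Lemma~\ref{lemma 2.2} alone but requires the parity (genus) information that the $2$-rank of the class group equals $t-1$. Once the shape of $d$ and the two congruence conditions are available, the verification of the hypothesis of Proposition~\ref{proposition 3.6} is a routine computation with Kronecker symbols, and no further Pell-equation manipulation (as in Theorem~\ref{theorem 2.9}(c)) is needed.
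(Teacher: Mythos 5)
Your proposal is correct and follows essentially the same route as the paper: the paper likewise reduces the converse to verifying the hypothesis of Proposition~\ref{proposition 3.6}, uses Lemma~\ref{lemma 2.2} together with the oddness of $|{\rm Pic}(\mathcal{O}_K)|$ when $t=1$ to force $t=2$, invokes Lemma~\ref{lemma 3.8} to obtain $\left(\frac{q}{p}\right)=-1$ (resp. $p\equiv 5\bmod 8$), and finishes with the same case-by-case Kronecker symbol check. The only difference is cosmetic (you organize the cases by the shape of $d$ rather than by which ramified prime divides $f$).
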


\begin{proof}
By Theorem~\ref{theorem 2.9} and Proposition~\ref{proposition 3.6}, it remains to show that if conditions (a) and (b) are satisfied, then for each ramified $p\in\mathbb{P}$ with $p\mid f$ and each $\alpha\in\{-1,1\}$, it follows that $((p$ is odd and $\pmb{\Big(}\frac{\alpha d/p}{p}\pmb{\Big)}=-1)$ or there is some odd $q\in\mathbb{P}$ such that $q\mid df$ and $\pmb{\Big(}\frac{-\alpha p}{q}\pmb{\Big)}=-1)$. Let conditions (a) and (b) be satisfied, let $p\in\mathbb{P}$ be ramified with $p\mid f$ and let $\alpha\in\{-1,1\}$. Set $t=|\{r\in\mathbb{P}:r\mid\mathsf{d}_K\}|$. Since ${\rm N}(\varepsilon)=-1$, it follows from Lemma~\ref{lemma 2.2} that $t\in\{1,2\}$. If $t=1$, then $|{\rm Pic}(\mathcal{O}_K)|$ is odd (see e.g. \cite[p. 100]{Br74}), a contradiction. Therefore, there is some $q\in\mathbb{P}$ with $q\not=p$ and $\{r\in\mathbb{P}:r\mid\mathsf{d}_K\}=\{p,q\}$. Since ${\rm N}(\varepsilon)=-1$, we have $p,q\not\equiv 3\mod 4$. In particular, $\mathsf{d}_K\not=4r$ for all $r\in\mathbb{P}$.

\medskip
CASE 1: $p\equiv q\equiv 1\mod 4$. Then $\mathsf{d}_K=d=pq$ and since $|{\rm Pic}(\mathcal{O}_K)|=2$, we infer by Lemma~\ref{lemma 3.8} that $\pmb{\Big(}\frac{q}{p}\pmb{\Big)}=-1$. Consequently, $p$ is odd and $\pmb{\Big(}\frac{\alpha d/p}{p}\pmb{\Big)}=\pmb{\Big(}\frac{q}{p}\pmb{\Big)}=-1$.

\medskip
CASE 2: $p=2$. Then $q\equiv 1\mod 4$ and $d=2q$. We infer by Lemma~\ref{lemma 3.8} that $q\equiv 5\mod 8$. Clearly, $q$ is odd, $q\mid df$ and $\pmb{\Big(}\frac{-\alpha p}{q}\pmb{\Big)}=\pmb{\Big(}\frac{2}{q}\pmb{\Big)}=-1$.

\medskip
CASE 3: $q=2$. Then $p\equiv 1\mod 4$ and $d=2p$. It follows from Lemma~\ref{lemma 3.8} that $p\equiv 5\mod 8$. Moreover, $p$ is odd and $\pmb{\Big(}\frac{\alpha d/p}{p}\pmb{\Big)}=\pmb{\Big(}\frac{2}{p}\pmb{\Big)}=-1$.
\end{proof}

\begin{corollary}\label{corollary 3.10}
Let $|{\rm Pic}(\mathcal{O}_K)|=2$.
\begin{enumerate}
\item $D_d\not=\emptyset$ if and only if there is some ramified $p\in\mathbb{P}$ such that $|{\rm Pic}(\mathcal{O}_p)|=|{\rm Pic}(\mathcal{O}_K)|$ and for all $a,b\in\mathbb{Z}$, $|pa^2-\frac{\mathsf{d}_K}{p}b^2|\not=4$.
\item If ${\rm N}(\varepsilon)=-1$, then $D_d\not=\emptyset$ if and only if there is some ramified $p\in\mathbb{P}$ such that $|{\rm Pic}(\mathcal{O}_p)|=|{\rm Pic}(\mathcal{O}_K)|$.
\end{enumerate}
\end{corollary}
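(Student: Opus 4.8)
The plan is to reduce both statements to the case of a single ramified prime conductor $f=p$, for which Theorems~\ref{theorem 2.9} and~\ref{theorem 3.9} take a particularly clean form. The crucial bridge is Theorem~\ref{theorem 2.6}(3): if any conductor has an unusual set of distances, then so does each of its ramified prime divisors. This lets me replace an arbitrary witness $f\in D_d$ by a ramified prime witness, at which point conditions~(a) and~(c) of Theorem~\ref{theorem 2.9} collapse to exactly the conditions appearing in the statement.

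For the reverse implication of~(1), I would suppose that $p\in\mathbb{P}$ is ramified, $|{\rm Pic}(\mathcal{O}_p)|=|{\rm Pic}(\mathcal{O}_K)|=2$, and $|pa^2-\frac{\mathsf{d}_K}{p}b^2|\not=4$ for all $a,b\in\mathbb{Z}$. Setting $f=p$, I would verify the three conditions of Theorem~\ref{theorem 2.9}: condition~(a) is immediate; condition~(b) holds since $p$ is prime (hence squarefree), is itself a ramified prime divisor of $f$, and has no split prime divisor; and condition~(c) is precisely the assumed norm inequality, because $p$ is the only ramified prime dividing $f=p$. Hence $\min\Delta(\mathcal{O}_p)>1$, so $p\in D_d$ and $D_d\not=\emptyset$.

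For the forward implication of~(1), I would take any $f\in D_d$. By Theorem~\ref{theorem 2.9}(b), $f$ has a ramified prime divisor $p$, and applying Theorem~\ref{theorem 2.6}(3) with $h=p$ gives $\min\Delta(\mathcal{O}_p)>1$, so $p\in D_d$. Reading off conditions~(a) and~(c) of Theorem~\ref{theorem 2.9} for the conductor $p$, whose unique ramified prime divisor is $p$ itself, then yields exactly $|{\rm Pic}(\mathcal{O}_p)|=|{\rm Pic}(\mathcal{O}_K)|$ together with $|pa^2-\frac{\mathsf{d}_K}{p}b^2|\not=4$ for all $a,b\in\mathbb{Z}$, as required.

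Part~(2) follows by running the same argument with Theorem~\ref{theorem 3.9} in place of Theorem~\ref{theorem 2.9}. Since ${\rm N}(\varepsilon)=-1$, Theorem~\ref{theorem 3.9} characterizes $\min\Delta(\mathcal{O}_p)>1$ by conditions~(a) and~(b) alone, so the norm inequality disappears from the characterization, leaving only $|{\rm Pic}(\mathcal{O}_p)|=|{\rm Pic}(\mathcal{O}_K)|$. The one step that genuinely requires an earlier theorem rather than a definitional check is the reduction to a prime conductor, which is exactly where Theorem~\ref{theorem 2.6}(3) is indispensable; everything else is a direct translation of the relevant characterization to the conductor $f=p$.
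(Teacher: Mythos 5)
Your proposal is correct and follows essentially the same route as the paper: both directions of (1) are read off from Theorem~\ref{theorem 2.9} applied to the prime conductor $f=p$, with Theorem~\ref{theorem 2.6}(3) supplying the reduction from an arbitrary $f\in D_d$ to a ramified prime divisor, and part (2) is the same argument with Theorem~\ref{theorem 3.9}. The only cosmetic difference is that the paper deduces (2) from (1) together with Theorem~\ref{theorem 3.9} rather than rerunning the argument, which is an equivalent shortcut.
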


\begin{proof}
(1) If there is some ramified $p\in\mathbb{P}$ such that $|{\rm Pic}(\mathcal{O}_p)|=|{\rm Pic}(\mathcal{O}_K)|$ and for all $a,b\in\mathbb{Z}$, $|pa^2-\frac{\mathsf{d}_K}{p}b^2|\not=4$, then $p\in D_d$ by Theorem~\ref{theorem 2.9}. Now let $D_d\not=\emptyset$. Then there is some $f\in\mathbb{N}$ with $\min\Delta(\mathcal{O}_f)>1$. It follows from Theorem~\ref{theorem 2.9} that there is some ramified prime $p$ with $p\mid f$. We infer by Theorem~\ref{theorem 2.6}(3) that $\min\Delta(\mathcal{O}_p)>1$, and hence $|{\rm Pic}(\mathcal{O}_p)|=|{\rm Pic}(\mathcal{O}_K)|$ and for all $a,b\in\mathbb{Z}$, $|pa^2-\frac{\mathsf{d}_K}{p}b^2|\not=4$ by Theorem~\ref{theorem 2.9}.

\bigskip
(2) This easily follows from (1) and Theorem~\ref{theorem 3.9}.
\end{proof}

\section{Another characterization in the general case}\label{4}

In this section we prove that the sufficient criterion in Proposition~\ref{proposition 3.6} characterizes the orders $\mathcal{O}$ for which $\Delta(\mathcal{O})$ is unusual. This characterization is especially useful to count the orders whose set of distances is unusual and whose discriminant is at most $n$ for each $n\in\mathbb{N}$. As a first step, we show that the aforementioned criterion is equivalent to a criterion with slightly different conditions.

\begin{remark}\label{remark 4.1}
Let $f\in\mathbb{N}$ be squarefree such that $f$ is divisible by a ramified prime, $f$ is not divisible by a split prime and $|{\rm Pic}(\mathcal{O}_f)|=|{\rm Pic}(\mathcal{O}_K)|=2$. The following conditions are equivalent:
\begin{enumerate}
\item For each ramified $p\in\mathbb{P}$ with $p\mid f$ and each $\alpha\in\{-1,1\}$, $(p$ is odd and $\pmb{\Big(}\frac{\alpha d/p}{p}\pmb{\Big)}=-1)$ or there is some odd ramified $q\in\mathbb{P}$ such that $\pmb{\Big(}\frac{-\alpha p}{q}\pmb{\Big)}=-1$.
\item For each ramified $p\in\mathbb{P}$ with $p\mid f$ and each $\alpha\in\{-1,1\}$, $(p$ is odd and $\pmb{\Big(}\frac{\alpha d/p}{p}\pmb{\Big)}=-1)$ or there is some odd $q\in\mathbb{P}$ such that $q\mid df$ and $\pmb{\Big(}\frac{-\alpha p}{q}\pmb{\Big)}=-1$.
\end{enumerate}
\end{remark}

\begin{proof}
(1) $\Rightarrow$ (2) This is clear, since for every odd ramified $q\in\mathbb{P}$, we have that $q\mid df$.

\bigskip
(2) $\Rightarrow$ (1) If ${\rm N}(\varepsilon)=-1$, then a careful examination of the proof of Theorem~\ref{theorem 3.9} shows that condition (1) is satisfied. Now let ${\rm N}(\varepsilon)=1$. We infer by Lemma~\ref{lemma 2.3} and Theorem~\ref{theorem 2.6}(3) that $f$ is not divisible by an odd inert prime. Consequently, each odd $q\in\mathbb{P}$ with $q\mid df$ is ramified. Therefore, condition (1) is satisfied.
\end{proof}

\begin{lemma}\label{lemma 4.2}
Suppose that there are some distinct $p,q\in\mathbb{P}$ such that $p\equiv q\equiv 3\mod 4$ and $(d=p$ or $d=2p$ or $d=pq)$. Then $|{\rm Pic}(\mathcal{O}_K)|$ is odd.
\end{lemma}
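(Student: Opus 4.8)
The plan is to translate the statement into a computation in the narrow (strict) class group $\mathrm{Pic}^+(\mathcal{O}_K)$ via Gauss's genus theory. First I would record two facts common to all three cases. In each case some odd prime $\equiv 3\bmod 4$ (namely $p$, or $q$) divides $d$, hence divides $\mathsf{d}_K$; since $\mathrm{N}(\varepsilon)=-1$ would force every odd prime dividing $\mathsf{d}_K$ to be $\equiv 1\bmod 4$ (reducing a solution of the norm equation modulo such a prime makes $-1$ a residue), we must have $\mathrm{N}(\varepsilon)=1$. Consequently the canonical surjection $\mathrm{Pic}^+(\mathcal{O}_K)\twoheadrightarrow\mathrm{Pic}(\mathcal{O}_K)$ has kernel of order $2$, i.e. $|\mathrm{Pic}^+(\mathcal{O}_K)|=2\,|\mathrm{Pic}(\mathcal{O}_K)|$. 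Secondly, the number $t$ of rational primes dividing $\mathsf{d}_K$ equals $2$ throughout: the ramified primes are $\{2,p\}$ when $d=p$ or $d=2p$ (so $\mathsf{d}_K=4p$ or $8p$) and $\{p,q\}$ when $d=pq$ (so $\mathsf{d}_K=pq$); note that the second prime $q$ is genuinely used only in the last case. By genus theory the $2$-rank of $\mathrm{Pic}^+(\mathcal{O}_K)$ is $t-1=1$, so its $2$-Sylow subgroup is cyclic.

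Next I would turn the claim into a statement about the $4$-rank. Because the order-$2$ kernel above lies inside the cyclic $2$-Sylow subgroup of $\mathrm{Pic}^+(\mathcal{O}_K)$, it is that subgroup's unique subgroup of order $2$; hence the $2$-adic valuations satisfy $v_2(|\mathrm{Pic}(\mathcal{O}_K)|)=v_2(|\mathrm{Pic}^+(\mathcal{O}_K)|)-1$. Therefore $|\mathrm{Pic}(\mathcal{O}_K)|$ is odd if and only if the $2$-Sylow subgroup of $\mathrm{Pic}^+(\mathcal{O}_K)$ has order exactly $2$, that is, if and only if the $4$-rank of $\mathrm{Pic}^+(\mathcal{O}_K)$ vanishes.

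It then remains to verify that this $4$-rank is $0$ in each case, and this is where the real work, and the main obstacle, lies. I would compute it with the R\'edei--Reichardt theorem: writing $\mathsf{d}_K=D_1D_2$ as a product of two prime discriminants ($D_1=-4,\ D_2=-p$ when $d=p$; $D_1=-8,\ D_2=-p$ when $d=2p$; $D_1=-p,\ D_2=-q$ when $d=pq$), the $4$-rank equals $1-\mathrm{rank}_{\mathbb{F}_2}R$ for the associated R\'edei matrix $R$, so it vanishes exactly when the R\'edei symbol pairing $D_1$ and $D_2$ is nontrivial. The congruences $p\equiv q\equiv 3\bmod 4$ are what force this symbol to equal $-1$: in the first two cases through $\left(\frac{-1}{p}\right)=-1$ (together with the value of $\left(\frac{2}{p}\right)$ when $d=2p$), and in the third through quadratic reciprocity applied to two primes congruent to $3$ modulo $4$. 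The delicate point is the correct, symmetric treatment of the dyadic prime in the R\'edei matrix and the bookkeeping of the reciprocity signs; rather than belabour this, I would, in line with the rest of the paper, invoke \cite{Br74}, whose theorems give the exact power of $2$ dividing the class number for precisely these forms of $d$ and thereby yield that $|\mathrm{Pic}(\mathcal{O}_K)|$ is odd directly.
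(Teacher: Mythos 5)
Your proof is correct, and in the end it lands exactly where the paper does: the paper's entire proof of this lemma is the single citation of \cite[Theorems 1 and 2]{Br74}, and your final step is to invoke the same reference. The genus-theoretic scaffolding you build first is nonetheless a sound, self-contained reduction and is worth a brief comparison. Your observation that any odd prime $\equiv 3\bmod 4$ dividing $d$ forces ${\rm N}(\varepsilon)=1$, hence $|{\rm Pic}^+(\mathcal{O}_K)|=2|{\rm Pic}(\mathcal{O}_K)|$, together with $t=2$ and the resulting cyclic $2$-Sylow subgroup of the narrow class group, correctly reduces the claim to the vanishing of the $4$-rank; and the R\'edei--Reichardt criterion with the prime-discriminant factorizations $4p=(-4)(-p)$, $8p=(-8)(-p)$, $pq=(-p)(-q)$ does give rank one in each case (in the $d=2p$ case one must check both off-diagonal entries, since which of $\left(\frac{-8}{p}\right)$ and $\left(\frac{-p}{2}\right)$ equals $-1$ depends on $p\bmod 8$, and in the $d=pq$ case reciprocity for two primes $\equiv 3\bmod 4$ forces exactly one of the two symbols to be $-1$). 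So your route would yield a proof independent of \cite{Br74} if you carried out that last computation rather than deferring it; what the paper's (and your final) approach buys is brevity, since Brown's theorems package precisely these discriminant forms, while your approach buys transparency about \emph{why} the congruence conditions $p\equiv q\equiv 3\bmod 4$ are what kill the $4$-rank.
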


\begin{proof}
This follows from \cite[Theorems 1 and 2]{Br74}.
\end{proof}

\begin{lemma}\label{lemma 4.3}
Suppose that one of the following conditions is satisfied:
\begin{enumerate}
\item[(a)] There are some distinct $p,q\in\mathbb{P}$ such that $p\equiv 1\mod 8$, $q\equiv 3\mod 4$, $\pmb{\Big(}\frac{p}{q}\pmb{\Big)}=1$ and $d=pq$.
\item[(b)] There are some odd distinct $p,q\in\mathbb{P}$ such that $\big(p\equiv q\equiv 1\mod 8$ or $p\equiv q\equiv 7\mod 8$ or $(p\equiv 1\mod 8$ or $q\equiv 1\mod 8)$ and $\pmb{\Big(}\frac{p}{q}\pmb{\Big)}=1\big)$ and $d=2pq$.
\item[(c)] There are some distinct $p,q,r\in\mathbb{P}$ such that $p\equiv 1\mod 4$, $q\equiv r\equiv 3\mod 4$, $\pmb{\Big(}\frac{p}{q}\pmb{\Big)}=\pmb{\Big(}\frac{p}{r}\pmb{\Big)}=1$ and $d=pqr$.
\end{enumerate}
Then $|{\rm Pic}(\mathcal{O}_K)|>2$.
\end{lemma}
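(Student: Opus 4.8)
The plan is to pass to the narrow (strict) class group $C^+$ of $K$ and combine Gauss genus theory with the R\'edei--Reichardt $4$-rank formula, treating all three cases uniformly. First I would record the discriminant and the number $t$ of its prime divisors in each case: in (a), $d=pq\equiv 3\pmod 4$ gives $\mathsf{d}_K=4pq$; in (b), $d=2pq$ gives $\mathsf{d}_K=8pq$; in (c), $d=pqr\equiv 1\pmod 4$ gives $\mathsf{d}_K=pqr$. In every case the set of primes dividing $\mathsf{d}_K$ has exactly $t=3$ elements, so genus theory gives $\mathrm{rank}_2(C^+)=t-1=2$ and hence $4\mid|C^+|$.

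Next I would reduce to a statement about $C^+$. Writing $|C^+|=2^{\delta}|{\rm Pic}(\mathcal{O}_K)|$ with $\delta=1$ when ${\rm N}(\varepsilon)=1$ and $\delta=0$ when ${\rm N}(\varepsilon)=-1$, the case ${\rm N}(\varepsilon)=-1$ is immediate: then $|{\rm Pic}(\mathcal{O}_K)|=|C^+|$ is divisible by $4$, so $|{\rm Pic}(\mathcal{O}_K)|>2$. The real work is the case ${\rm N}(\varepsilon)=1$, where $4\mid|C^+|$ only yields $2\mid|{\rm Pic}(\mathcal{O}_K)|$; there one must strengthen this to $8\mid|C^+|$, i.e.\ show that the $4$-rank of $C^+$ is at least $1$. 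I note that in (a) and (c) a prime $\equiv 3\pmod 4$ divides $d$, which forces ${\rm N}(\varepsilon)=1$, so these cases genuinely require the $4$-rank input, while in (b) the hypotheses split into sub-cases according to the residues of $p,q$ modulo $8$.

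The heart of the argument is the $4$-rank computation via R\'edei--Reichardt: $\mathrm{rank}_4(C^+)=(t-1)-\mathrm{rank}_{\mathbb{F}_2}(M)$, where $M$ is the $t\times t$ R\'edei matrix over $\mathbb{F}_2$ whose off-diagonal entry in position $(i,j)$ is $0$ or $1$ according as the symbol $\left(\frac{p_i^*}{p_j}\right)$ attached to the prime discriminants $p_i^*$ dividing $\mathsf{d}_K$ equals $1$ or $-1$, the diagonal being determined by the requirement that each row sum to zero. Since $t=3$, it suffices to show $\mathrm{rank}_{\mathbb{F}_2}(M)\le 1$, and the hypotheses are designed precisely for this. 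For example in case (c) one has $p^*=p$, $q^*=-q$, $r^*=-r$; the congruence $p\equiv 1\pmod 4$ gives $\left(\frac{-1}{p}\right)=1$, and then quadratic reciprocity together with $\left(\frac{p}{q}\right)=\left(\frac{p}{r}\right)=1$ forces every entry in the row and column indexed by $p$ to vanish, after which the two remaining off-diagonal entries (governed by $\left(\frac{q}{r}\right)$) leave a matrix of rank exactly $1$. Hence $\mathrm{rank}_4(C^+)\ge 1$, so $8\mid|C^+|$ and $|{\rm Pic}(\mathcal{O}_K)|\ge 4$.

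The step I expect to be the main obstacle is the bookkeeping at the prime $2$ in cases (a) and (b). There the prime discriminant attached to $2$ depends on $\mathsf{d}_K\bmod 8$, and the entries of $M$ in the row and column of $2$ involve the supplementary symbols $\left(\frac{2}{\ell}\right)$ and $\left(\frac{-1}{\ell}\right)$; this is exactly why the hypotheses are stated modulo $8$ rather than modulo $4$, and each sub-case of (b) must be checked separately against the rank bound. I would organize this as a short case analysis recording the three relevant off-diagonal symbols per configuration. Finally, I remark that the whole assertion is an instance of the classical criteria for $4\mid h$ due to Brown, so that, in keeping with the proofs of Lemma~\ref{lemma 3.8} and Lemma~\ref{lemma 4.2}, it likewise follows at once from \cite[Theorems 1 and 2]{Br74}.
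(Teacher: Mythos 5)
Your argument is sound, but it takes a genuinely different route from the paper: the paper disposes of this lemma in one line by citing Brown's Theorems 3--5 (note: \emph{not} Theorems 1 and 2, which are the ones used for Lemmas~\ref{lemma 3.8} and~\ref{lemma 4.2}; your closing remark points at the wrong theorems of \cite{Br74}, and Theorems 1 and 2 there do not cover the three-prime-discriminant configurations of this lemma). What you propose is essentially a self-contained proof of the relevant cases of Brown's criteria via genus theory plus the R\'edei--Reichardt $4$-rank formula, and the skeleton is correct: $t=3$ gives $2$-rank $2$ of the narrow class group, the norm-$(-1)$ case is immediate, and in the norm-$1$ case the hypotheses force the R\'edei matrix to have $\mathbb{F}_2$-rank at most $1$ (I checked your case (c) computation and several sub-cases of (a) and (b); the row and column indexed by the prime $p\equiv 1\bmod 4$ or $1\bmod 8$ vanish by reciprocity, and the two remaining off-diagonal entries are complementary, so the rank is exactly $1$). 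What your approach buys is transparency -- it makes visible exactly why the congruence conditions are stated modulo $8$ in cases (a) and (b), namely the supplementary symbols entering the row and column of the prime $2$ -- at the cost of a sub-case analysis for (b) that you outline but do not execute; the paper's citation buys brevity at the cost of opacity. To make your write-up complete you would need to actually tabulate the handful of residue configurations in case (b) (including the one with no prime $\equiv 3\bmod 4$ dividing $d$, where ${\rm N}(\varepsilon)=-1$ is possible and the $4$-rank step can be skipped), but none of these checks fails.
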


\begin{proof}
This is an immediate consequence of \cite[Theorems 3--5]{Br74}.
\end{proof}

\begin{theorem}\label{theorem 4.4}
Let $f\in\mathbb{N}$. Then $\min\Delta(\mathcal{O}_f)>1$ if and only if the following conditions are satisfied:
\begin{enumerate}
\item[(a)] $|{\rm Pic}(\mathcal{O}_f)|=|{\rm Pic}(\mathcal{O}_K)|=2$.
\item[(b)] $f$ is squarefree, $f$ is divisible by a ramified prime and $f$ is not divisible by a split prime.
\item[(c)] For each ramified $p\in\mathbb{P}$ with $p\mid f$ and each $\alpha\in\{-1,1\}$, $(p$ is odd and $\pmb{\Big(}\frac{\alpha d/p}{p}\pmb{\Big)}=-1)$ or there is some odd ramified $q\in\mathbb{P}$ such that $\pmb{\Big(}\frac{-\alpha p}{q}\pmb{\Big)}=-1$.
\end{enumerate}
\end{theorem}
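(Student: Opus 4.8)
The plan is to treat the two implications separately, since one direction is essentially already available. For the backward direction, suppose (a), (b) and (c) hold. Condition (c) is exactly condition (1) of Remark~\ref{remark 4.1}, which under the standing hypotheses (a), (b) is equivalent to condition (2) there; and (a), (b) together with condition (2) are precisely the hypotheses of Proposition~\ref{proposition 3.6}. Hence Proposition~\ref{proposition 3.6} yields $\min\Delta(\mathcal{O}_f)=2>1$, and the backward direction is complete. All the substance therefore lies in the forward direction.

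For the forward direction, assume $\min\Delta(\mathcal{O}_f)>1$. By Theorem~\ref{theorem 2.9}, conditions (a) and (b) hold and, for every ramified $p\in\mathbb{P}$ with $p\mid f$ and all $a,b\in\mathbb{Z}$, we have $|pa^2-\frac{\mathsf{d}_K}{p}b^2|\neq 4$. The first step I would take is the elementary observation, already implicit in the norm computations throughout the paper, that for a ramified prime $p$ the equation $|pa^2-\frac{\mathsf{d}_K}{p}b^2|=4$ is solvable if and only if the ramified prime ideal $\mathfrak{p}$ of $\mathcal{O}_K$ lying over $p$ is principal: clearing the factor $p$ exhibits a generator of norm $\pm p$, and conversely any generator of $\mathfrak{p}$ has norm $\pm p$. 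Thus the content of Theorem~\ref{theorem 2.9} is that $\mathfrak{p}$ is nonprincipal for every ramified $p\mid f$, and the task reduces to showing that this nonprincipality is encoded precisely by the Kronecker-symbol condition (c).

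I would then split according to the sign of ${\rm N}(\varepsilon)$. If ${\rm N}(\varepsilon)=-1$, then the implication (a) and (b) $\Rightarrow$ (c) is exactly what the proof of Theorem~\ref{theorem 3.9} establishes, the sign $\alpha$ being immaterial there because multiplication by $\varepsilon$ reverses the norm; so this case follows by citing that argument. If ${\rm N}(\varepsilon)=1$, then Lemma~\ref{lemma 2.2} together with $|{\rm Pic}(\mathcal{O}_K)|=2$ forces the number $t$ of primes dividing $\mathsf{d}_K$ to be $2$ or $3$. I would enumerate the admissible shapes of $d$ (according to $d\bmod 4$ and whether $2$ is ramified), discarding those for which Lemma~\ref{lemma 4.2} makes $|{\rm Pic}(\mathcal{O}_K)|$ odd and those for which Lemma~\ref{lemma 4.3} makes $|{\rm Pic}(\mathcal{O}_K)|>2$. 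For each surviving shape one translates the nonprincipality of $\mathfrak{p}$ (equivalently, the value $|{\rm Pic}(\mathcal{O}_K)|=2$ read through the relevant genus character) into relations among $\left(\frac{\alpha d/p}{p}\right)$ and $\left(\frac{-\alpha p}{q}\right)$, and checks these are exactly (c). Lemma~\ref{lemma 3.3} is the decisive tool when $d$ is a product of three odd primes: it either produces a solution of the norm equation, contradicting nonprincipality, or forces $|{\rm Pic}(\mathcal{O}_K)|>2$, thereby eliminating the problematic configurations.

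The hard part will be the case ${\rm N}(\varepsilon)=1$ with $t=3$, where $\mathsf{d}_K$ has three prime factors and one must combine the reciprocity relations coming from $|{\rm Pic}(\mathcal{O}_K)|=2$ (via Brown's theorems, as packaged in Lemmas~\ref{lemma 4.2} and~\ref{lemma 4.3}) with the nonprincipality of each ramified $\mathfrak{p}$ to pin down the sign-dependent symbols in (c). The genuine subtlety is the interplay between the sign $\alpha$, which records whether one represents $+4$ or $-4$ and hence the behavior at the infinite place, and the dyadic conditions when $2$ is ramified; keeping track of which ramified prime plays the role of $q$ for each $p$ and each $\alpha$ is the delicate bookkeeping, and I expect the argument to parallel, but be more involved than, the three-case split in the proof of Theorem~\ref{theorem 3.9}.
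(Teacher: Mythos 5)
Your backward direction is complete and coincides with the paper's: conditions (a)--(c) feed into Remark~\ref{remark 4.1} and Proposition~\ref{proposition 3.6}, and your reduction of the forward direction --- Theorem~\ref{theorem 2.9} for (a), (b) and the nonsolvability of $|pa^2-\frac{\mathsf{d}_K}{p}b^2|=4$, then Theorem~\ref{theorem 3.9} to dispose of ${\rm N}(\varepsilon)=-1$, then Lemmas~\ref{lemma 2.2},~\ref{lemma 4.2},~\ref{lemma 4.3} to enumerate the admissible shapes of $d$ --- is exactly the paper's skeleton. But from that point on you only describe what a proof would do; you do not do it. The entire substance of the forward direction is the case analysis over the shapes $d=2q$, $d=qr$ (three congruence types), $d=2qr$ and $d=qrs$, and in each case the mechanism is not ``read off the genus character of $\mathfrak{p}$'' in the abstract but a concrete computation with the fundamental unit: writing $\varepsilon=u+v\sqrt{d}$ (or $\frac{u'}{2}+\frac{v'}{2}\sqrt{d}$), factoring $(u+1)(u-1)=dv^2$, ruling out the splittings that would contradict minimality of $\varepsilon$, and thereby determining \emph{which} ambiguous norm equation $|qa^2-rb^2|=c$ is solvable and \emph{with which sign}. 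That last point matters: your reformulation ``$|pa^2-\frac{\mathsf{d}_K}{p}b^2|=4$ solvable iff $\mathfrak{p}$ principal'' collapses the two signs $\pm 4$, whereas condition (c) quantifies over $\alpha$ separately, so principality alone does not decide (c); one needs the exact value of the norm of the relevant generator, which is what the unit computation supplies.

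Concretely, what is missing and cannot be waved through: in the cases $d=2q$ and $d=qr$ with $q\equiv r\equiv 1\bmod 4$ the argument must produce a solution of the forbidden equation and hence a contradiction (these shapes cannot occur); in the case $d=qr$ with $q\equiv 1$, $r\equiv 3\bmod 4$ one must split on the parity of $u,v$ and use Lemma~\ref{lemma 4.3} to get $q\equiv 5\bmod 8$ or $\bigl(\tfrac{q}{r}\bigr)=-1$ before the symbols in (c) can be pinned down; in the case $d=2qr$ one needs the trichotomy that one of $|a^2-db^2|=2$, $|qa^2-2rb^2|=1$, $|ra^2-2qb^2|=1$ is solvable (itself a nontrivial claim proved from the unit), followed by a residue-class analysis of $q,r$ modulo $8$ that in particular \emph{excludes} the configuration $q\equiv 7$, $r\equiv 3\bmod 8$; and in the case $d=qrs$ one must invoke Lemma~\ref{lemma 3.3} twice to locate which of the three ramified primes carries a solvable norm equation. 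None of these steps is routine, and your proposal explicitly defers them (``I expect the argument to parallel\dots''). So while the route you chose is the right one, the proof of the forward direction has not actually been given.
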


\begin{proof}
If conditions (a)--(c) are satisfied, then it is an immediate consequence of Proposition~\ref{proposition 3.6} and Remark~\ref{remark 4.1} that $\min\Delta(\mathcal{O}_f)>1$.

\smallskip
Now let $\min\Delta(\mathcal{O}_f)>1$. By Theorem~\ref{theorem 2.9}, conditions (a) and (b) are satisfied and for each ramified $p\in\mathbb{P}$ with $p\mid f$ and each $a,b\in\mathbb{Z}$, $|pa^2-\frac{\mathsf{d}_K}{p}b^2|\not=4$. By Theorem~\ref{theorem 3.9} and its proof, we can assume without restriction that ${\rm N}(\varepsilon)=1$. Let $p\in\mathbb{P}$ be ramified with $p\mid f$ and let $\alpha\in\{-1,1\}$. We have to show that ($p$ is odd and $\pmb{\Big(}\frac{\alpha d/p}{p}\pmb{\Big)}=-1$) or there is some odd ramified $\ell\in\mathbb{P}$ such that $\pmb{\Big(}\frac{-\alpha p}{\ell}\pmb{\Big)}=-1$. Set $t^{\prime}=|\{\ell\in\mathbb{P}:\ell\mid d\}|$. It follows from Lemma~\ref{lemma 2.2} that $t^{\prime}\in [1,3]$. If $t^{\prime}=1$, then $|{\rm Pic}(\mathcal{O}_K)|$ is odd by Lemma~\ref{lemma 4.2} and \cite[p. 100]{Br74}, a contradiction.

\smallskip
If $t^{\prime}=2$ and $d\equiv 2\mod 4$, then it follows from Lemma~\ref{lemma 4.2} that $d=2q$ for some $q\in\mathbb{P}$ with $q\equiv 1\mod 4$. If $t^{\prime}=2$ and $d\equiv 1\mod 4$, then we infer by Lemma~\ref{lemma 4.2} that $d=qr$ for some distinct $q,r\in\mathbb{P}$ with $q\equiv r\equiv 1\mod 4$. If $t^{\prime}=2$ and $d\equiv 3\mod 4$, then $d=qr$ for some $q,r\in\mathbb{P}$ with $q\equiv 1\mod 4$ and $r\equiv 3\mod 4$. If $t^{\prime}=3$ and $d\equiv 2\mod 4$, then $d=2qr$ for some odd distinct $q,r\in\mathbb{P}$. If $t^{\prime}=3$ and $d\equiv 1\mod 4$, then $d=qrs$ for some odd distinct $q,r,s\in\mathbb{P}$ with $q\equiv 1\mod 4$ and $r\equiv s\mod 4$. If $t^{\prime}=3$ and $d\equiv 3\mod 4$, then $\mathsf{d}_K$ has four distinct prime divisors, which contradicts Lemma~\ref{lemma 2.2}. In particular, we need to consider the following cases.

\medskip
CASE 1: $d=2q$ for some $q\in\mathbb{P}$ with $q\equiv 1\mod 4$. Then $\mathsf{d}_K=8q$, $p\in\{2,q\}$ and for all $a,b\in\mathbb{Z}$, $|pa^2-\frac{d}{p}b^2|\not=1$. There are some $u,v\in\mathbb{N}$ such that $\varepsilon=u+v\sqrt{d}$. Since $u^2-dv^2=1$, we infer that $u$ is odd and $v$ is even. Moreover, $(u+1)(u-1)=dv^2$. Assume that there is some $e\in\{-1,1\}$ such that $4q\mid u+e$. Then $\frac{u-e}{2}\frac{u+e}{4q}=(\frac{v}{2})^2$ and $\frac{u-e}{2}$ and $\frac{u+e}{4q}$ are coprime positive integers. This implies that there are some $a,b\in\mathbb{N}$ such that $a^2=\frac{u-e}{2}$ and $b^2=\frac{u+e}{4q}$, and hence $|a^2-db^2|=1$. Since $a<u$, this contradicts the fact that $\varepsilon$ is a fundamental unit of $\mathcal{O}_K$. Consequently, $4q\nmid u+1$ and $4q\nmid u-1$. Therefore, there is some $e\in\{-1,1\}$ such that $d\mid u+e$ and $4\mid u-e$. Again since $\frac{u-e}{4}\frac{u+e}{d}=(\frac{v}{2})^2$ and $\frac{u-e}{4}$ and $\frac{u+e}{d}$ are coprime positive integers, there are some $g,h\in\mathbb{Z}$ such that $g^2=\frac{u-e}{4}$ and $h^2=\frac{u+e}{d}$. Observe that $|2g^2-qh^2|=1$, and thus $|pa^2-\frac{d}{p}b^2|=1$ for some $a,b\in\mathbb{Z}$, a contradiction.

\medskip
CASE 2: $d=qr$ for some distinct $q,r\in\mathbb{P}$ with $q\equiv r\equiv 1\mod 4$. Then $\mathsf{d}_K=qr$, $p\in\{q,r\}$ and for all $a,b\in\mathbb{Z}$, $|qa^2-rb^2|\not=4$. There are some $u^{\prime},v^{\prime}\in\mathbb{N}$ such that $u^{\prime}\equiv v^{\prime}\mod 2$ and $\varepsilon=\frac{u^{\prime}}{2}+\frac{v^{\prime}}{2}\sqrt{d}$.

First we assume that $u^{\prime}$ and $v^{\prime}$ are odd. Observe that ${u^{\prime}}^2-d{v^{\prime}}^2=4$, and hence $(u^{\prime}+2)(u^{\prime}-2)=d{v^{\prime}}^2$. Assume that there is some $e\in\{-1,1\}$ such that $d\mid u^{\prime}+2e$. Then $(u^{\prime}-2e)\frac{u^{\prime}+2e}{d}={v^{\prime}}^2$ and $u^{\prime}-2e$ and $\frac{u^{\prime}+2e}{d}$ are coprime odd positive integers. Consequently, there are some odd $a,b\in\mathbb{N}$ such that $a^2=u^{\prime}-2e$ and $b^2=\frac{u^{\prime}+2e}{d}$. This implies that $|a^2-db^2|=4$ and $a<u^{\prime}$, which contradicts the fact that $\varepsilon$ is a fundamental unit of $\mathcal{O}_K$. Therefore, there is some $e\in\{-1,1\}$ such that $q\mid u^{\prime}+2e$ and $r\mid u^{\prime}-2e$. We infer that $\frac{u^{\prime}+2e}{q}\frac{u^{\prime}-2e}{r}={v^{\prime}}^2$ and $\frac{u^{\prime}+2e}{q}$ and $\frac{u^{\prime}-2e}{r}$ are coprime positive integers. It follows that there are some $a,b\in\mathbb{Z}$ such that $a^2=\frac{u^{\prime}+2e}{q}$ and $b^2=\frac{u^{\prime}-2e}{r}$. Observe that $|qa^2-rb^2|=4$, a contradiction.

Now let $u^{\prime}$ and $v^{\prime}$ be even. Set $u=\frac{u^{\prime}}{2}$ and $v=\frac{v^{\prime}}{2}$. Then $u^2-dv^2=1$ and $(u+1)(u-1)=dv^2$. Clearly, $u$ is odd and $v$ is even. Assume that $d\mid u+e$ for some $e\in\{-1,1\}$. Then $\frac{u-e}{2}\frac{u+e}{2d}=(\frac{v}{2})^2$ and $\frac{u-e}{2}$ and $\frac{u+e}{2d}$ are coprime positive integers. In particular, there are some $a,b\in\mathbb{N}$ such that $a^2=\frac{u-e}{2}$ and $b^2=\frac{u+e}{2d}$. It follows that $|a^2-db^2|=1$ and $a<u$, which contradicts the fact that $\varepsilon$ is a fundamental unit of $\mathcal{O}_K$. Consequently, there is some $e\in\{-1,1\}$ such that $q\mid u+e$ and $r\mid u-e$. We have that $\frac{u+e}{2q}\frac{u-e}{2r}=(\frac{v}{2})^2$ and $\frac{u+e}{2q}$ and $\frac{u-e}{2r}$ are coprime positive integers. We infer that $a^2=\frac{u+e}{2q}$ and $b^2=\frac{u-e}{2r}$ for some $a,b\in\mathbb{Z}$. Obviously, $|q(2a)^2-r(2b)^2|=4$, a contradiction.

\medskip
CASE 3: $d=qr$ for some distinct $q,r\in\mathbb{P}$ with $q\equiv 1\mod 4$ and $r\equiv 3\mod 4$. Then $\mathsf{d}_K=4qr$, $p\in\{2,q,r\}$ and ($q\equiv 5\mod 8$ or $\pmb{\Big(}\frac{q}{r}\pmb{\Big)}=-1$) by Lemma~\ref{lemma 4.3}. There are some $u,v\in\mathbb{N}$ such that $\varepsilon=u+v\sqrt{d}$. Clearly, $u^2-dv^2=1$, and hence ($u$ is odd and $v$ is even) or ($u$ is even and $v$ is odd). Moreover, $(u+1)(u-1)=dv^2$.

First let $u$ be odd and let $v$ be even. Assume that there is some $e\in\{-1,1\}$ such that $d\mid u+e$. Then $\frac{u-e}{2}\frac{u+e}{2d}=(\frac{v}{2})^2$ and $\frac{u-e}{2}$ and $\frac{u+e}{2d}$ are coprime positive integers. Therefore, there are some $a,b\in\mathbb{N}$ such that $a^2=\frac{u-e}{2}$ and $b^2=\frac{u+e}{2d}$. We have $|a^2-db^2|=1$ and $a<u$, which contradicts the fact that $\varepsilon$ is a fundamental unit of $\mathcal{O}_K$. Consequently, there is some $e\in\{-1,1\}$ such that $q\mid u+e$ and $r\mid u-e$. Observe that $\frac{u+e}{2q}\frac{u-e}{2r}=(\frac{v}{2})^2$ and $\frac{u+e}{2q}$ and $\frac{u-e}{2r}$ are coprime positive integers. This implies that $a^2=\frac{u+e}{2q}$ and $b^2=\frac{u-e}{2r}$ for some $a,b\in\mathbb{Z}$. Note that $|qa^2-rb^2|=1$, and thus $\pmb{\Big(}\frac{q}{r}\pmb{\Big)}=\pmb{\Big(}\frac{r}{q}\pmb{\Big)}=1$. Therefore, $q\equiv 5\mod 8$. If $p$ is odd, then for all $g,h\in\mathbb{Z}$, $|qg^2-rh^2|\not=1$ (since $|px^2-\frac{\mathsf{d}_K}{p}y^2|\not=4$ for all $x,y\in\mathbb{Z}$), a contradiction. This implies that $p=2$. Since $q\equiv 5\mod 8$, it follows that $\pmb{\Big(}\frac{-\alpha p}{q}\pmb{\Big)}=-1$.

Now let $u$ be even and let $v$ be odd. First let there be some $e\in\{-1,1\}$ such that $d\mid u+e$. Then $(u-e)\frac{u+e}{d}=v^2$ and $u-e$ and $\frac{u+e}{d}$ are coprime positive integers. Obviously, there are some $a,b\in\mathbb{Z}$ such that $a^2=u-e$ and $b^2=\frac{u+e}{d}$. We have $|a^2-db^2|=2$, and hence $\pmb{\Big(}\frac{2}{q}\pmb{\Big)}=1$ and $q\equiv 1\mod 8$. We infer that $\pmb{\Big(}\frac{r}{q}\pmb{\Big)}=\pmb{\Big(}\frac{q}{r}\pmb{\Big)}=-1$. If $p=2$, then $|g^2-dh^2|\not=2$ for all $g,h\in\mathbb{Z}$, a contradiction. This implies that $p$ is odd. If $p=q$, then $\pmb{\Big(}\frac{\alpha d/p}{p}\pmb{\Big)}=-1$. If $p=r$, then $\pmb{\Big(}\frac{-\alpha p}{q}\pmb{\Big)}=-1$. From now on we can assume that $d\nmid u+1$ and $d\nmid u-1$. Consequently, there is some $e\in\{-1,1\}$ such that $q\mid u+e$ and $r\mid u-e$. Since $\frac{u+e}{q}\frac{u-e}{r}=v^2$ and $\frac{u+e}{q}$ and $\frac{u-e}{r}$ are coprime positive integers, there are some $a,b\in\mathbb{Z}$ such that $a^2=\frac{u+e}{q}$ and $b^2=\frac{u-e}{r}$. We infer that $|qa^2-rb^2|=2$, and thus $\pmb{\Big(}\frac{q}{r}\pmb{\Big)}=\pmb{\Big(}\frac{r}{q}\pmb{\Big)}=\pmb{\Big(}\frac{2}{q}\pmb{\Big)}$. If $\pmb{\Big(}\frac{q}{r}\pmb{\Big)}=1$, then $\pmb{\Big(}\frac{2}{q}\pmb{\Big)}=1$ and $q\equiv 1\mod 8$, a contradiction. It follows that $\pmb{\Big(}\frac{q}{r}\pmb{\Big)}=\pmb{\Big(}\frac{r}{q}\pmb{\Big)}=\pmb{\Big(}\frac{2}{q}\pmb{\Big)}=-1$. If $p\in\{2,r\}$, then $\pmb{\Big(}\frac{-\alpha p}{q}\pmb{\Big)}=-1$. Finally, if $p=q$, then $\pmb{\Big(}\frac{\alpha d/p}{p}\pmb{\Big)}=-1$.

\medskip
CASE 4: $d=2qr$ for some odd distinct $q,r\in\mathbb{P}$. Then $\mathsf{d}_K=8qr$ and $p\in\{2,q,r\}$. Let $m,n\in\{1,3,5,7\}$ be such that $q\equiv m\mod 8$ and $r\equiv n\mod 8$. There are some $u,v\in\mathbb{N}$ such that $\varepsilon=u+v\sqrt{d}$. Note that $u^2-dv^2=1$, $(u+1)(u-1)=dv^2$, $u$ is odd and $v$ is even. Assume that $p=2$. Then it follows from Theorems~\ref{theorem 2.6}(3) and~\ref{theorem 2.9} that $|{\rm Pic}(\mathcal{O}_2)|=|{\rm Pic}(\mathcal{O}_K)|$. Consequently, $(\mathcal{O}_K^{\times}:\mathcal{O}_2^{\times})=2$, and thus $\varepsilon\not\in\mathcal{O}_2$. This contradicts the fact that $v$ is even. Therefore, $p\in\{q,r\}$.

\medskip
CLAIM: There are some $a,b\in\mathbb{Z}$ with $|a^2-db^2|=2$ or there are some $a,b\in\mathbb{Z}$ such that $|qa^2-2rb^2|=1$ or there are some $a,b\in\mathbb{Z}$ such that $|ra^2-2qb^2|=1$.

\medskip\smallskip
We prove the claim. Assume that there is some $e\in\{-1,1\}$ such that $2d\mid u+e$. Then $\frac{u-e}{2}\frac{u+e}{2d}=(\frac{v}{2})^2$ and $\frac{u-e}{2}$ and $\frac{u+e}{2d}$ are coprime positive integers. Therefore, there are some $a,b\in\mathbb{N}$ such that $a^2=\frac{u-e}{2}$ and $b^2=\frac{u+e}{2d}$. We infer that $|a^2-db^2|=1$ and $a<u$, which contradicts the fact that $\varepsilon$ is a fundamental unit of $\mathcal{O}_K$. Therefore, $2d\nmid u+1$ and $2d\nmid u-1$. Next let there be some $e\in\{-1,1\}$ such that $d\mid u+e$. Then $\frac{u-e}{4}\frac{u+e}{d}=(\frac{v}{2})^2$ and $\frac{u-e}{4}$ and $\frac{u+e}{d}$ are coprime positive integers. Consequently, there are some $g,h\in\mathbb{Z}$ such that $g^2=\frac{u-e}{4}$ and $h^2=\frac{u+e}{d}$. Observe that $|(2g)^2-dh^2|=2$, and thus there are some $a,b\in\mathbb{Z}$ such that $|a^2-db^2|=2$. Now let $d\nmid u+1$ and $d\nmid u-1$. Then either there is some $e\in\{-1,1\}$ such that $4q\mid u+e$ or there is some $e\in\{-1,1\}$ such that $4r\mid u+e$. First let there be some $e\in\{-1,1\}$ with $4q\mid u+e$. Then $\frac{u-e}{2r}\frac{u+e}{4q}=(\frac{v}{2})^2$ and $\frac{u-e}{2r}$ and $\frac{u+e}{4q}$ are coprime positive integers. We infer that there are some $a,b\in\mathbb{Z}$ with $a^2=\frac{u-e}{2r}$ and $b^2=\frac{u+e}{4q}$. Obviously, $|ra^2-2qb^2|=1$. If there is some $e\in\{-1,1\}$ such that $4r\mid u+e$, then it follows by analogy that $|qa^2-2rb^2|=1$ for some $a,b\in\mathbb{Z}$. This proves the claim.

\medskip\smallskip
Without restriction we can assume that $p=q$. Note that $|qa^2-2rb^2|\not=1$ for all $a,b\in\mathbb{Z}$ (since $|px^2-\frac{\mathsf{d}_K}{p}y^2|\not=4$ for all $x,y\in\mathbb{Z}$). It remains to show that $\pmb{\Big(}\frac{\alpha d/p}{p}\pmb{\Big)}=-1$ or $\pmb{\Big(}\frac{-\alpha p}{r}\pmb{\Big)}=-1$.

First let $m=1$ or $n=1$. Note that $\pmb{\Big(}\frac{r}{q}\pmb{\Big)}=\pmb{\Big(}\frac{q}{r}\pmb{\Big)}=-1$ by Lemma~\ref{lemma 4.3}. If $m=1$, then $\pmb{\Big(}\frac{\alpha d/p}{p}\pmb{\Big)}=\pmb{\Big(}\frac{2r}{q}\pmb{\Big)}=-1$. Moreover, if $n=1$, then $\pmb{\Big(}\frac{-\alpha p}{r}\pmb{\Big)}=\pmb{\Big(}\frac{q}{r}\pmb{\Big)}=-1$. Next let $m=5$ or $n=5$. If there are some $a,b\in\mathbb{Z}$ with $|a^2-db^2|=2$, then ($m=5$ and $\pmb{\Big(}\frac{2}{q}\pmb{\Big)}=1$) or ($n=5$ and $\pmb{\Big(}\frac{2}{r}\pmb{\Big)}=1$), a contradiction. It follows from the claim that there are some $a,b\in\mathbb{Z}$ such that $|ra^2-2qb^2|=1$. If $m=5$, then $\pmb{\Big(}\frac{r}{q}\pmb{\Big)}=1$, and thus $\pmb{\Big(}\frac{\alpha d/p}{p}\pmb{\Big)}=\pmb{\Big(}\frac{2r}{q}\pmb{\Big)}=-1$. Next let $n=5$. Then $\pmb{\Big(}\frac{2q}{r}\pmb{\Big)}=1$, and hence $\pmb{\Big(}\frac{q}{r}\pmb{\Big)}=-1$. This implies that $\pmb{\Big(}\frac{-\alpha p}{r}\pmb{\Big)}=\pmb{\Big(}\frac{q}{r}\pmb{\Big)}=-1$. From now on let $\{m,n\}\cap\{1,5\}=\emptyset$.

Assume that $m=7$. Then $n=3$ by Lemma~\ref{lemma 4.3}. Suppose that there are some $a,b\in\mathbb{Z}$ and some $\beta\in\{-1,1\}$ such that $a^2-db^2=2\beta$. Then $\pmb{\Big(}\frac{2\beta}{q}\pmb{\Big)}=\pmb{\Big(}\frac{2\beta}{r}\pmb{\Big)}=1$, and hence $\beta=\pmb{\Big(}\frac{2\beta}{q}\pmb{\Big)}=\pmb{\Big(}\frac{2\beta}{r}\pmb{\Big)}=-\beta$, a contradiction. It follows by the claim that there are some $a,b\in\mathbb{Z}$ and some $\beta\in\{-1,1\}$ such that $ra^2-2qb^2=\beta$. Observe that $\pmb{\Big(}\frac{r}{q}\pmb{\Big)}=\pmb{\Big(}\frac{\beta}{q}\pmb{\Big)}=\beta$ and $\pmb{\Big(}\frac{-2q}{r}\pmb{\Big)}=\pmb{\Big(}\frac{\beta}{r}\pmb{\Big)}=\beta$. This implies that $-\beta=-\pmb{\Big(}\frac{r}{q}\pmb{\Big)}=\pmb{\Big(}\frac{q}{r}\pmb{\Big)}=\pmb{\Big(}\frac{-2q}{r}\pmb{\Big)}=\beta$, a contradiction. Therefore, $m\not=7$. We infer that $m=3$ and $n\in\{3,7\}$. Note that $\pmb{\Big(}\frac{\alpha d/p}{p}\pmb{\Big)}=-\alpha\pmb{\Big(}\frac{r}{q}\pmb{\Big)}$ and $\pmb{\Big(}\frac{-\alpha p}{r}\pmb{\Big)}=-\alpha\pmb{\Big(}\frac{q}{r}\pmb{\Big)}=\alpha\pmb{\Big(}\frac{r}{q}\pmb{\Big)}$. In any case, we conclude that $\pmb{\Big(}\frac{\alpha d/p}{p}\pmb{\Big)}=-1$ or $\pmb{\Big(}\frac{-\alpha p}{r}\pmb{\Big)}=-1$.

\medskip\smallskip
CASE 5: $d=qrs$ for some odd distinct $q,r,s\in\mathbb{P}$ with $q\equiv 1\mod 4$ and $r\equiv s\mod 4$. Then $\mathsf{d}_K=qrs$ and $p\in\{q,r,s\}$. We infer by Lemma~\ref{lemma 3.3} that $r\equiv s\equiv 3\mod 4$. It follows from Lemma~\ref{lemma 4.3} that $\pmb{\Big(}\frac{q}{r}\pmb{\Big)}=-1$ or $\pmb{\Big(}\frac{q}{s}\pmb{\Big)}=-1$. Without restriction let $\pmb{\Big(}\frac{q}{r}\pmb{\Big)}=-1$. Then $\pmb{\Big(}\frac{r}{q}\pmb{\Big)}=-1$. Therefore, for each $a,b\in\mathbb{Z}$, $|ra^2-\frac{\mathsf{d}_K}{r}b^2|\not=4$. If $p=r$, then $\pmb{\Big(}\frac{-\alpha p}{q}\pmb{\Big)}=-1$.

Next let $p=q$. Since $|qa^2-\frac{\mathsf{d}_K}{q}b^2|\not=4$ for all $a,b\in\mathbb{Z}$, it follows from Lemma~\ref{lemma 3.3} that there are some $a,b\in\mathbb{Z}$ with $|sa^2-\frac{\mathsf{d}_K}{s}b^2|=4$. We have $\pmb{\Big(}\frac{q}{s}\pmb{\Big)}=\pmb{\Big(}\frac{s}{q}\pmb{\Big)}=1$. If $\alpha=-1$, then $\pmb{\Big(}\frac{-\alpha p}{r}\pmb{\Big)}=-1$. If $\alpha=1$, then $\pmb{\Big(}\frac{-\alpha p}{s}\pmb{\Big)}=-\pmb{\Big(}\frac{q}{s}\pmb{\Big)}=-1$.

Finally, let $p=s$. Clearly, $|sa^2-\frac{\mathsf{d}_K}{s}b^2|\not=4$ for all $a,b\in\mathbb{Z}$, and thus there are some $a,b\in\mathbb{Z}$ such that $|qa^2-\frac{\mathsf{d}_K}{q}b^2|=4$ by Lemma~\ref{lemma 3.3}. This implies that $\pmb{\Big(}\frac{rs}{q}\pmb{\Big)}=1$, and thus $\pmb{\Big(}\frac{s}{q}\pmb{\Big)}=\pmb{\Big(}\frac{r}{q}\pmb{\Big)}=-1$. Consequently, $\pmb{\Big(}\frac{-\alpha p}{q}\pmb{\Big)}=\pmb{\Big(}\frac{s}{q}\pmb{\Big)}=-1$.
\end{proof}

\begin{corollary}\label{corollary 4.5}
Let $t=|\{p\in\mathbb{P}:p\mid\mathsf{d}_K\}|$ and let $D_d\not=\emptyset$.
\begin{enumerate}
\item ${\rm N}(\varepsilon)=-1$ if and only if $t=2$. If these equivalent conditions are satisfied, then there are some distinct $p,q\in\mathbb{P}$ such that $p\equiv q\equiv 1\mod 4$ and $d\in\{2p,pq\}$.
\item ${\rm N}(\varepsilon)=1$ if and only if $t=3$. If these equivalent conditions are satisfied, then there are some odd distinct $p,q,r\in\mathbb{P}$ such that $(p\equiv 1\mod 4$, $q\equiv 3\mod 4$ and $d=pq)$ or $d=2pq$ or $(p\equiv 1\mod 4$, $q\equiv r\equiv 3\mod 4$ and $d=pqr)$.
\end{enumerate}
\end{corollary}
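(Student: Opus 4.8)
The plan is to obtain both statements by combining the numerical bounds of Lemma~\ref{lemma 2.2} with the case analyses already carried out in the proofs of Theorems~\ref{theorem 3.9} and~\ref{theorem 4.4}. Since $D_d\neq\emptyset$, I would first fix some $f\in\mathbb{N}$ with $\min\Delta(\mathcal{O}_f)>1$; then $|{\rm Pic}(\mathcal{O}_K)|=2$ by Theorem~\ref{theorem 2.6}(1). Inserting this into Lemma~\ref{lemma 2.2} gives $2\geq 2^{t-2}$, hence $t\leq 3$, while $t=1$ is ruled out because a fundamental discriminant with only one prime divisor has odd class number (the fact already used in the proofs of Theorems~\ref{theorem 3.9} and~\ref{theorem 4.4}). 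Thus $t\in\{2,3\}$. One direction of each equivalence is now free: if ${\rm N}(\varepsilon)=-1$, then Lemma~\ref{lemma 2.2} improves the bound to $2\geq 2^{t-1}$, forcing $t=2$, and contrapositively $t=3$ forces ${\rm N}(\varepsilon)=1$.

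The real content is the reverse implication, that ${\rm N}(\varepsilon)=1$ forces $t=3$ (equivalently, that $t=2$ rules out ${\rm N}(\varepsilon)=1$), and I would extract this from the proof of Theorem~\ref{theorem 4.4}. Running that proof under ${\rm N}(\varepsilon)=1$ and $|{\rm Pic}(\mathcal{O}_K)|=2$, after discarding the one-prime case via Lemma~\ref{lemma 4.2}, yields the division into CASES 1--5 according to the shape of $d$. A short check of discriminants shows that CASES 1 and 2 (namely $d=2q$ with $q\equiv 1\mod 4$, and $d=qr$ with $q\equiv r\equiv 1\mod 4$) are exactly the configurations with $t=2$, whereas CASES 3--5 are exactly those with $t=3$. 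In the proof of Theorem~\ref{theorem 4.4} the first two cases each terminate in a contradiction with condition (c) of Theorem~\ref{theorem 2.9} for a ramified prime dividing $f$; hence, whenever $D_d\neq\emptyset$, CASES 1 and 2 cannot arise, so only $t=3$ survives. This settles both equivalences.

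For the explicit descriptions of $d$ I would read off the surviving cases. In part (2) the three shapes listed — $d=pq$ with $p\equiv 1,q\equiv 3\mod 4$, then $d=2pq$, and $d=pqr$ with $p\equiv 1,q\equiv r\equiv 3\mod 4$ — are precisely CASES 3, 4 and 5 of the proof of Theorem~\ref{theorem 4.4}. In part (1), under ${\rm N}(\varepsilon)=-1$ no prime $\equiv 3\mod 4$ divides $\mathsf{d}_K$ and $d\not\equiv 3\mod 4$ (as noted in the proof of Theorem~\ref{theorem 3.9}); since $t=2$, the two prime divisors of $\mathsf{d}_K$ then each lie in $\{2\}\cup\{\ell\in\mathbb{P}:\ell\equiv 1\mod 4\}$, and squarefreeness of $d$ forces $d=pq$ with $p\equiv q\equiv 1\mod 4$ or $d=2p$ with $p\equiv 1\mod 4$, exactly the asserted forms.

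I expect the only delicate point to be the middle paragraph: confirming that CASES 1 and 2 of Theorem~\ref{theorem 4.4} line up exactly with $t=2$ and that their contradictions use nothing beyond the hypotheses available here, so that they may be cited rather than reproven. The remaining steps are pure bookkeeping with the two binary alternatives ${\rm N}(\varepsilon)\in\{-1,1\}$ and $t\in\{2,3\}$.
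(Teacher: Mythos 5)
Your proposal is correct and follows essentially the same route as the paper: the paper likewise gets $t\in\{2,3\}$ from Lemma~\ref{lemma 2.2}, Lemma~\ref{lemma 4.2} and the odd-class-number fact, uses Lemma~\ref{lemma 2.2} for ${\rm N}(\varepsilon)=-1\Rightarrow t=2$, and rules out $t=2$ with ${\rm N}(\varepsilon)=1$ by citing Cases 1 and 2 of the proof of Theorem~\ref{theorem 4.4} applied to a ramified prime satisfying the non-representability condition (the paper sources that prime from Corollary~\ref{corollary 3.10}(1), you from Theorem~\ref{theorem 2.9} via a ramified divisor of $f$ --- the same thing). The shape descriptions are read off exactly as you indicate, with Lemma~\ref{lemma 3.3} supplying the congruence pattern in the $d=pqr$ case.
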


\begin{proof}
Since $|{\rm Pic}(\mathcal{O}_K)|=2$ by Theorem~\ref{theorem 4.4}, we have $t\in\{2,3\}$ by Lemmas~\ref{lemma 2.2} and~\ref{lemma 4.2}, and \cite[p. 100]{Br74}. It follows from Corollary~\ref{corollary 3.10}(1) that there is some ramified $\ell\in\mathbb{P}$ such that $|\ell a^2-\frac{\mathsf{d}_K}{\ell}b^2|\not=4$ for all $a,b\in\mathbb{Z}$.

\bigskip
(1) If ${\rm N}(\varepsilon)=-1$, then it follows from Lemma~\ref{lemma 2.2} that $t=2$. Now let $t=2$. It is a simple consequence of Lemma~\ref{lemma 4.2} that $p\not\equiv 3\mod 4$ for each $p\in\mathbb{P}$ with $p\mid d$. In particular, $d\not\equiv 3\mod 4$ and $d\not\in\mathbb{P}$. Consequently, there are some distinct $p,q\in\mathbb{P}$ such that $d=pq$. If $d\equiv 1\mod 4$, then $p\equiv q\equiv 1\mod 4$. Now let $d\equiv 2\mod 4$. Then $p=2$ or $q=2$. Without restriction let $q=2$. Then $d=2p$. Therefore, the additional statement is true. Assume that ${\rm N}(\varepsilon)=1$. It can be shown along the same lines as in Cases $1$ and $2$ of the proof of Theorem~\ref{theorem 4.4} (where $\ell$ plays the role of the prime ``$p$'' in the aforementioned proof) that we obtain a contradiction. This implies that ${\rm N}(\varepsilon)=-1$.

\bigskip
(2) Since $t\in\{2,3\}$, it is an immediate consequence of (1) that ${\rm N}(\varepsilon)=1$ if and only if $t=3$. Now let $t=3$ and let ${\rm N}(\varepsilon)=1$. If $d\equiv 1\mod 4$, then it follows from Lemma~\ref{lemma 3.3} that there are some odd distinct $p,q,r\in\mathbb{P}$ such that $p\equiv 1\mod 4$, $q\equiv r\equiv 3\mod 4$ and $d=pqr$. If $d\equiv 2\mod 4$, then there are some odd distinct $p,q\in\mathbb{P}$ such that $\mathsf{d}_K=8pq$, and hence $d=2pq$. If $d\equiv 3\mod 4$, then $\mathsf{d}_K=4d$, and thus there are some odd distinct $p,q\in\mathbb{P}$ such that $p\equiv 1\mod 4$, $q\equiv 3\mod 4$ and $d=pq$. This shows that the additional statement is true.
\end{proof}

\section{Classification and additional facts and remarks}\label{5}

In the final section, we present a theorem which classifies all the real quadratic number fields that possess an order $\mathcal{O}$ with $\min\Delta(\mathcal{O})>1$. More precisely, we classify the squarefree numbers $d$ according to the structure of $D_d$ (i.e., the set of unusual conductors of $d$). We start with three propositions that describe the structure of $D_d$ and $D_d^{\prime}$ in some special cases.

\begin{proposition}\label{proposition 5.1}
Let $\Omega=\{|D_d|:d\in\mathbb{N}_{\geq 2}, d$ is squarefree, $|{\rm Pic}(\mathcal{O}_K)|=2$ and ${\rm N}(\varepsilon)=1\}$. Then $\Omega=[0,7]\setminus\{5\}$.
\end{proposition}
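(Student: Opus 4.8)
The plan is to reduce to the three arithmetic shapes of $d$ singled out by Corollary~\ref{corollary 4.5}(2) and then to read off $|D_d|$ from the multiplicative structure of $D_d$. First I would dispose of the value $0$: it is realized by any $d$ with $|{\rm Pic}(\mathcal{O}_K)|=2$ and ${\rm N}(\varepsilon)=1$ for which $\mathsf{d}_K$ has only two prime divisors (e.g.\ $d=34$), since then $t\not=3$ forces $D_d=\emptyset$ by Corollary~\ref{corollary 4.5}(2). For the rest of the argument I assume $D_d\not=\emptyset$, so that by Corollary~\ref{corollary 4.5}(2) we have $t=3$ and $d\in\{pq,\,2pq,\,pqr\}$ in the notation there.

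The structural heart is a description of $D_d=D_d'$ (recall $D_d=D_d'$ by Proposition~\ref{proposition 3.1}(2)). Let $S$ be the set of ramified primes $\ell$ with $\ell\in D_d$. By Theorem~\ref{theorem 2.6}(3) the set $D_d$ is closed under passing to divisors divisible by a ramified prime, and by Corollary~\ref{corollary 2.10} it is closed under coprime products with the inert part held fixed. Hence, whenever $2$ is ramified or split---the cases $d\in\{pq,2pq\}$ and the sub-case $d=pqr$ with $d\equiv 1\bmod 8$---every element of $D_d$ is a product of primes from $S$, and conversely every nonempty such product lies in $D_d$; one checks condition Theorem~\ref{theorem 2.9}(a) directly from the class number formula, using that $\ell\in S$ forces $(\mathcal{O}_K^{\times}:\mathcal{O}_{\ell}^{\times})=\ell$ and that these indices combine as least common multiples over the distinct ramified prime divisors. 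Thus $|D_d|=2^{|S|}-1$ in these cases. When $2$ is inert (that is, $d=pqr$ with $d\equiv 5\bmod 8$) the even conductors $2\prod_{\ell\in T}\ell$ must be treated separately: the class number formula, now carrying the factor $\tfrac32$ at the inert prime $2$, shows that $2\prod_{\ell\in T}\ell\in D_d$ if and only if $(\mathcal{O}_K^{\times}:\mathcal{O}_2^{\times})=3$ and $3\notin T$. Writing $S_2=S\setminus\{3\}$ when $(\mathcal{O}_K^{\times}:\mathcal{O}_2^{\times})=3$ and $S_2=\emptyset$ otherwise, one obtains $|D_d|=(2^{|S|}-1)+(2^{|S_2|}-1)$.

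The key bound comes from Lemma~\ref{lemma 3.3}: in every sub-case with $d=pqr$ it produces a ramified prime $\ell$ with $|\ell a^2-\tfrac{\mathsf{d}_K}{\ell}b^2|=4$ for suitable $a,b$, so $\ell\notin S$ by Theorem~\ref{theorem 2.9}(c); hence $|S|\le 2$ whenever $d=pqr$. Feeding the possibilities $|S|\in\{0,1,2\}$ into the formula above, together with whether $3\in S$ and whether $(\mathcal{O}_K^{\times}:\mathcal{O}_2^{\times})=3$, yields $|D_d|\in\{0,1,2,3,4,6\}$ when $2$ is inert and $|D_d|\in\{0,1,3\}$ when $d=pqr$ with $2$ split. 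For $d\in\{pq,2pq\}$ no analogue of Lemma~\ref{lemma 3.3} applies, so $|S|$ may equal $3$ and $|D_d|=2^{|S|}-1\in\{0,1,3,7\}$. Taking the union over all cases gives $|D_d|\in\{0,1,2,3,4,6,7\}=[0,7]\setminus\{5\}$, which settles the upper bound and the exclusion of $5$ at once.

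It remains to exhibit, for each $n\in\{0,1,2,3,4,6,7\}$, a squarefree $d$ with $|{\rm Pic}(\mathcal{O}_K)|=2$, ${\rm N}(\varepsilon)=1$ and $|D_d|=n$. The values $0,1,3,7$ arise from the shapes $d\in\{pq,2pq\}$ by arranging $|S|=0,1,2,3$ respectively, while the genuinely new values $2,4,6$ require the inert-$2$ regime $d=pqr$ with $d\equiv 5\bmod 8$, the condition $(\mathcal{O}_K^{\times}:\mathcal{O}_2^{\times})=3$ (equivalently an odd $\omega$-component of $\varepsilon$), and, respectively, $|S|=1$ with $3\notin S$, $|S|=2$ with $3\in S$, and $|S|=2$ with $3\notin S$. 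I expect the main obstacle to be exactly the bookkeeping in this inert-$2$ regime---verifying $|{\rm Pic}(\mathcal{O}_f)|=2$ for the even conductors and locating explicit $d$ realizing $2,4,6$---together with confirming a value-$7$ example in which all three ramified primes are simultaneously good in the sense of Theorem~\ref{theorem 2.9}(c).
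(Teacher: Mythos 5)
Your upper-bound argument is sound and in places more systematic than the paper's: the formulas $|D_d|=2^{|S|}-1$ (when $2$ is ramified or split) and $|D_d|=(2^{|S|}-1)+(2^{|S_2|}-1)$ (when $2$ is inert), combined with the bound $|S|\le 2$ extracted from Lemma~\ref{lemma 3.3} in the $d=pqr$ cases, do give $\Omega\subseteq\{0,1,2,3,4,6,7\}$ and make the exclusion of $5$ transparent. The paper reaches the same conclusion by a shorter case analysis on $x\ge 5$: at least three ramified primes in $D_d$ forces $x=7$ via Corollary~\ref{corollary 2.10}, and exactly two forces $2pq\in D_d$ and hence $x=6$ via Theorem~\ref{theorem 2.6}(3). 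Two small inaccuracies in your structural half: for $d=2pq$ the value $|S|=3$ cannot occur, since every $f\in D_d$ is odd there (cf.\ Proposition~\ref{proposition 5.3}(1)) --- harmless for the bound, but it means $7$ must be realized with $d=pq$, $q\equiv 3\bmod 4$; and your ``if and only if'' for the even conductors needs the implicit hypotheses $\emptyset\neq T\subseteq S$.

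The genuine gap is the inclusion $[0,7]\setminus\{5\}\subseteq\Omega$. The proposition asserts an equality, and you only witness the value $0$ (via $d=34$, which itself requires checking $|{\rm Pic}(\mathcal{O}_K)|=2$ and ${\rm N}(\varepsilon)=1$). For $1,2,3,4,6,7$ you describe which regime each value must come from but stop at ``it remains to exhibit'' and ``I expect the main obstacle to be exactly the bookkeeping''. There is no a priori reason why, say, a squarefree $d=pqr\equiv 5\bmod 8$ with $|{\rm Pic}(\mathcal{O}_K)|=2$, ${\rm N}(\varepsilon)=1$, $(\mathcal{O}_K^{\times}:\mathcal{O}_2^{\times})=3$, $|S|=2$ and $3\notin S$ should exist; producing such $d$ is precisely the computational content of the statement. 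The paper settles this by exhibiting $d=30,1005,51,165,429,15$ with $|D_d|=1,2,3,4,6,7$ respectively (e.g.\ $D_{165}=\{3,5,10,15\}$, $D_{429}=\{11,13,22,26,143,286\}$, $D_{15}=\{2,3,5,6,10,15,30\}$), each verified using Corollary~\ref{corollary 2.10}, Proposition~\ref{proposition 3.1}, Theorem~\ref{theorem 4.4} and a computer check. Without such witnesses the proof of the stated equality is only half done.
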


\begin{proof}
It follows from \cite[pp. 22 and 23]{HK13a} and a computer verification that $|{\rm Pic}(\mathcal{O}_K)|=2$ and ${\rm N}(\varepsilon)=1$ for each $d\in\{15,30,34,51,165,429,1005\}$. It is now an easy consequence of Corollary~\ref{corollary 2.10}, Proposition~\ref{proposition 3.1}, Theorem~\ref{theorem 4.4} and a computer verification that $D_{15}=\{2,3,5,6,10,15,30\}$, $D_{30}=\{3\}$, $D_{34}=\emptyset$, $D_{51}=\{3,17,51\}$, $D_{165}=\{3,5,10,15\}$, $D_{429}=\{11,13,22,26,143,286\}$ and $D_{1005}=\{67,134\}$. We infer that $[0,7]\setminus\{5\}\subseteq\Omega$. It follows from Propositions~\ref{proposition 3.1} and~\ref{proposition 3.4} and Theorem~\ref{theorem 4.4} that $\Omega\subseteq [0,7]$. Let $x\in\Omega$ be such that $x\geq 5$. Then there is some $d\in\mathbb{N}_{\geq 2}$ such that $d$ is squarefree, ${\rm N}(\varepsilon)=1$ and $|D_d|=x$. Observe that $D_d$ contains at least two ramified primes by Proposition~\ref{proposition 3.1}. If $D_d$ contains at least three ramified primes, then $x=7$ by Corollary~\ref{corollary 2.10}. Now let $D_d$ contain precisely two ramified primes $p$ and $q$. Since $x\geq 5$, we infer by Corollary~\ref{corollary 2.10}, Proposition~\ref{proposition 3.1} and Theorem~\ref{theorem 4.4} that $2$ is inert and $\{p,q,pq\}\subseteq D_d\subseteq\{p,q,pq,2p,2q,2pq\}$. Since $x\geq 5$, this implies that $\{2p,2q\}\subseteq D_d$ or $2pq\in D_d$. It follows from Corollary~\ref{corollary 2.10} that $2pq\in D_d$. Therefore, $x=6$ by Theorem~\ref{theorem 2.6}(3).
\end{proof}

\begin{proposition}\label{proposition 5.2}
Let $\Omega^{\prime}=\{|D_d^{\prime}|:d\in\mathbb{N}_{\geq 2}, d$ is squarefree, $|{\rm Pic}(\mathcal{O}_K)|=2$ and ${\rm N}(\varepsilon)=-1\}$.
\begin{enumerate}
\item Let $d=2p$ for some $p\in\mathbb{P}$ with $p\equiv 1\mod 4$, let $|{\rm Pic}(\mathcal{O}_K)|=2$ and let ${\rm N}(\varepsilon)=-1$. Then $2\in D_d^{\prime}$ and $\{f\in D_d:f$ is even$\}\subseteq\{2,2p\}$.
\item Let $d\equiv 5\mod 8$, ${\rm N}(\varepsilon)=-1$ and $(\mathcal{O}_K^{\times}:\mathcal{O}_2^{\times})=3$. If $p\in D_d^{\prime}\cap\mathbb{P}$, then $2p\in D_d^{\prime}$.
\item $\Omega^{\prime}=[0,3]\cup\{6\}$.
\end{enumerate}
\end{proposition}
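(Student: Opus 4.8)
The common engine for all three parts is Theorem~\ref{theorem 3.9}: since ${\rm N}(\varepsilon)=-1$, a conductor $f$ lies in $D_d$ precisely when $|{\rm Pic}(\mathcal{O}_f)|=2$ and $f$ is squarefree, divisible by a ramified prime and not divisible by a split prime. I would combine this with the conductor--class-number formula and with the elementary observation that for coprime $m,n$ one has $\mathcal{O}_m\cap\mathcal{O}_n=\mathcal{O}_{mn}$, hence $\mathcal{O}_m^{\times}\cap\mathcal{O}_n^{\times}=\mathcal{O}_{mn}^{\times}$ and $(\mathcal{O}_K^{\times}:\mathcal{O}_{mn}^{\times})={\rm lcm}((\mathcal{O}_K^{\times}:\mathcal{O}_m^{\times}),(\mathcal{O}_K^{\times}:\mathcal{O}_n^{\times}))$. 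I will also use that $(\mathcal{O}_K^{\times}:\mathcal{O}_{\ell}^{\times})\in\{1,\ell\}$ for a ramified prime $\ell$ and $(\mathcal{O}_K^{\times}:\mathcal{O}_2^{\times})\in\{1,3\}$ when $2$ is inert.

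For (1) note that $d=2p\equiv 2\bmod 4$, so $\mathsf{d}_K=8p$ and $2,p$ are the ramified primes. Writing $\varepsilon=a+b\sqrt{d}$, the equation $a^2-db^2=-1$ forces $b$ odd, so $\varepsilon\notin\mathcal{O}_2$ and $(\mathcal{O}_K^{\times}:\mathcal{O}_2^{\times})=2$; the formula then gives $|{\rm Pic}(\mathcal{O}_2)|=2$, whence $2\in D_d'$. For the inclusion, take an even $f\in D_d$ and write $f=2m$ with $m$ odd and squarefree; by Theorem~\ref{theorem 3.9} no prime of $m$ splits and the only odd ramified prime is $p$, so it remains to exclude an odd inert $q\mid m$. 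If such $q$ existed, then $2q\mid f$ would give $\min\Delta(\mathcal{O}_{2q})>1$ by Theorem~\ref{theorem 2.6}(3); but $\mathcal{O}_q$ is half-factorial by Theorem~\ref{theorem 2.6}(3), so $|{\rm Pic}(\mathcal{O}_q)|=2$ and $q\equiv 3\bmod 4$ by Lemma~\ref{lemma 2.3}, forcing $(\mathcal{O}_K^{\times}:\mathcal{O}_q^{\times})=q+1$ and, since $q+1$ is even, $(\mathcal{O}_K^{\times}:\mathcal{O}_{2q}^{\times})={\rm lcm}(2,q+1)=q+1$; the formula then yields $|{\rm Pic}(\mathcal{O}_{2q})|=4$, contradicting $|{\rm Pic}(\mathcal{O}_{2q})|=2$.

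For (2) we have $d\equiv 5\bmod 8$, so $2$ is inert and $\mathsf{d}_K=d$. Since $p\in D_d'\cap\mathbb{P}$, the prime $p$ is ramified, hence $p\mid d$; Corollary~\ref{corollary 4.5}(1) then gives $d=p'q'$ with $p'\equiv q'\equiv 1\bmod 4$, so $p\equiv 1\bmod 4$ and in particular $p\neq 3$. From $p\in D_d$ the formula gives $(\mathcal{O}_K^{\times}:\mathcal{O}_p^{\times})=p$, while $(\mathcal{O}_K^{\times}:\mathcal{O}_2^{\times})=3$ by hypothesis; coprimality of $2$ and $p$ yields $(\mathcal{O}_K^{\times}:\mathcal{O}_{2p}^{\times})={\rm lcm}(3,p)=3p$, and the formula (with the inert factor $3/2$ at $2$ and the ramified factor $1$ at $p$) gives $|{\rm Pic}(\mathcal{O}_{2p})|=6p/(3p)=2$. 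As $2p$ is squarefree, divisible by the ramified prime $p$ and not by a split prime, Theorem~\ref{theorem 3.9} gives $2p\in D_d$, and therefore $2p\in D_d'$.

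For (3) I would invoke Corollary~\ref{corollary 4.5}(1) (legitimate once $D_d'\neq\emptyset$) to reduce to $d=2p$ or $d=pq$ with $p,q\equiv 1\bmod 4$, the case $D_d'=\emptyset$ contributing the value $0$. Introducing the indicators $P=[\,p\in D_d\,]$, $Q=[\,q\in D_d\,]$ and $T=[\,(\mathcal{O}_K^{\times}:\mathcal{O}_2^{\times})=3\,]$, the ${\rm lcm}$ computation expresses the membership of every candidate conductor as a Boolean combination of $P,Q,T$: in $d=2p$ one gets $D_d'\subseteq\{2,p,2p\}$ with $2$ always present and $p,2p$ present iff $P$, so $|D_d'|\in\{1,3\}$; in $d=pq$ with $2$ split ($d\equiv 1\bmod 8$) one gets $D_d'\subseteq\{p,q,pq\}$ with $pq\in D_d\Leftrightarrow P\wedge Q$, so $|D_d'|\in\{0,1,3\}$; and in $d=pq$ with $2$ inert ($d\equiv 5\bmod 8$) one gets $D_d'\subseteq\{p,q,pq,2p,2q,2pq\}$, where the three even conductors additionally require $T$ (since, e.g., $2p\in D_d$ forces ${\rm lcm}(3,(\mathcal{O}_K^{\times}:\mathcal{O}_p^{\times}))=3p$), and running through the eight possibilities for $(P,Q,T)$ gives $|D_d'|\in\{0,1,2,3,6\}$. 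This proves $\Omega'\subseteq[0,3]\cup\{6\}$, and it remains to realize each of $0,1,2,3,6$ by a single squarefree $d$ with $|{\rm Pic}(\mathcal{O}_K)|=2$ and ${\rm N}(\varepsilon)=-1$, which I would supply by a short computer-verified table as in the proof of Proposition~\ref{proposition 5.1}. The main obstacle is the inert-$2$ case of $d=pq$: one must check that the six candidates are governed exactly by $P,Q,T$ and confirm that the enumeration never produces $4$ or $5$; the subsidiary point requiring care is that $p\neq 3$ in (2) is genuinely forced by Corollary~\ref{corollary 4.5}(1) rather than assumed.
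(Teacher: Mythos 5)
Your proposal is correct, and in parts (1) and (3) it takes a genuinely different route from the paper. The unifying device you introduce --- that $\mathcal{O}_{mn}^{\times}=\mathcal{O}_m^{\times}\cap\mathcal{O}_n^{\times}$ for coprime $m,n$, whence $(\mathcal{O}_K^{\times}:\mathcal{O}_{mn}^{\times})={\rm lcm}\big((\mathcal{O}_K^{\times}:\mathcal{O}_m^{\times}),(\mathcal{O}_K^{\times}:\mathcal{O}_n^{\times})\big)$ --- is sound (it follows from $\mathcal{O}_f^{\times}=\mathcal{O}_f\cap\mathcal{O}_K^{\times}$ and the fact that $\mathcal{O}_K^{\times}/\{\pm 1\}$ is infinite cyclic) and replaces two ad hoc computations. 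In (1) the paper expands $\varepsilon^{q+1}=a+b\sqrt{d}$ binomially to show $b$ is even, so that $\varepsilon^{q+1}\in\mathcal{O}_{2q}$ contradicts $(\mathcal{O}_K^{\times}:\mathcal{O}_{2q}^{\times})=2(q+1)$; you instead compute the index as ${\rm lcm}(2,q+1)=q+1$ and read off $|{\rm Pic}(\mathcal{O}_{2q})|=4\neq 2$. In (2) the paper rules out $(\mathcal{O}_K^{\times}:\mathcal{O}_{2p}^{\times})=p$ by arguing that $\varepsilon^{p}\in\mathcal{O}_2$ would force $3\mid p$; this is your ${\rm lcm}(3,p)=3p$ in disguise, and both arguments need $p\neq 3$ from Corollary~\ref{corollary 4.5}(1), which you correctly flag. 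In (3) your Boolean enumeration in the indicators $P,Q,T$ is cleaner and more informative than the paper's argument, which shows that $x\geq 4$ forces $p,q,pq\in D_d^{\prime}$, hence some even conductor lies in $D_d^{\prime}$, hence $(\mathcal{O}_K^{\times}:\mathcal{O}_2^{\times})=3$, and then bootstraps to $x=6$ via part (2) and Corollary~\ref{corollary 2.10}; your equivalences $2\ell\in D_d\Leftrightarrow(\ell\in D_d\textnormal{ and }T)$ and $pq\in D_d\Leftrightarrow(P\textnormal{ and }Q)$ are exactly what Theorem~\ref{theorem 3.9} plus the lcm identity deliver, and they visibly exclude the values $4$ and $5$. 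The one substantive shortfall is the realization of $0,1,2,3,6$: the paper uses $d\in\{5374184665,185,1285,10,85\}$, and the value $0$ is far from a ``short computer-verified table'' --- by Corollary~\ref{corollary 3.10}(2) it requires a squarefree $d$ with $|{\rm Pic}(\mathcal{O}_K)|=2$, ${\rm N}(\varepsilon)=-1$ and $\varepsilon\in\mathcal{O}_p$ for every ramified $p$, and the paper obtains $d=5374184665$ only through the extensive search documented in Remark~\ref{remark 5.5}. This does not invalidate your argument, but that step should not be presented as routine.
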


\begin{proof}
(1) First we show that $2\in D_d^{\prime}$. By Theorem~\ref{theorem 3.9}, it remains to show that $|{\rm Pic}(\mathcal{O}_2)|=|{\rm Pic}(\mathcal{O}_K)|$. There are some $u,v\in\mathbb{N}$ such that $\varepsilon=u+v\sqrt{d}$. Since $u^2-2pv^2=-1$, we find that $v$ is odd, and thus $\varepsilon\not\in\mathcal{O}_2$. Consequently, $(\mathcal{O}_K^{\times}:\mathcal{O}_2^{\times})=2$, and hence $|{\rm Pic}(\mathcal{O}_2)|=|{\rm Pic}(\mathcal{O}_K)|$.

\smallskip
Now we show that $\{f\in D_d:f$ is even$\}\subseteq\{2,2p\}$. Let $f\in D_d$ be even. Assume that $f\not\in\{2,2p\}$. It follows from Proposition~\ref{proposition 3.4} and \cite[Proposition 4.15]{Br-Ge-Re20} that there is some odd inert $q\in\mathbb{P}$ such that $f\in\{2q,2pq\}$. We infer by Theorem~\ref{theorem 2.6}(3) that $2q\in D_d$ and $\mathcal{O}_q$ is half-factorial. Therefore, $|{\rm Pic}(\mathcal{O}_{2q})|=|{\rm Pic}(\mathcal{O}_q)|=|{\rm Pic}(\mathcal{O}_K)|$, and hence $(\mathcal{O}_K^{\times}:\mathcal{O}_{2q}^{\times})=2(q+1)$ and $(\mathcal{O}_K^{\times}:\mathcal{O}_{q}^{\times})=q+1$. There are some $a,b\in\mathbb{N}$ such that $\varepsilon^{q+1}=a+b\sqrt{d}$. We have $b=\sum_{i=0,i\equiv 1\mod 2}^{q+1}\binom{q+1}{i}u^{q+1-i}v^i(2p)^{\frac{i-1}{2}}\equiv (q+1)u^qv\equiv 0\mod 2$. Since $\varepsilon^{q+1}\in\mathcal{O}_q$, it follows that $q\mid b$, and thus $\varepsilon^{q+1}\in\mathcal{O}_{2q}$, which contradicts $(\mathcal{O}_K^{\times}:\mathcal{O}_{2q}^{\times})=2(q+1)$.

\bigskip
(2) Let $p\in D_d^{\prime}\cap\mathbb{P}$. Observe that $2$ is inert and $p$ is odd and ramified. It follows from Corollary~\ref{corollary 4.5}(1) that $p\equiv 1\mod 4$. Moreover, $|{\rm Pic}(\mathcal{O}_p)|=|{\rm Pic}(\mathcal{O}_K)|=2$ by Theorem~\ref{theorem 3.9}, and thus $(\mathcal{O}_K^{\times}:\mathcal{O}_p^{\times})=p$. This implies that $p\mid (\mathcal{O}_K^{\times}:\mathcal{O}_{2p}^{\times})\mid 3p$. Assume that $(\mathcal{O}_K^{\times}:\mathcal{O}_{2p}^{\times})=p$. Then $\varepsilon^p\in\mathcal{O}_2$, and thus $3\mid p$ (since $(\mathcal{O}_K^{\times}:\mathcal{O}_2^{\times})=3$). Consequently, $p=3\not\equiv 1\mod 4$, a contradiction. We have $(\mathcal{O}_K^{\times}:\mathcal{O}_{2p}^{\times})=3p$, and hence $|{\rm Pic}(\mathcal{O}_{2p})|=|{\rm Pic}(\mathcal{O}_K)|$. Therefore, $2p\in D_d^{\prime}$ by Theorem~\ref{theorem 3.9}.

\bigskip
(3) Note that $|{\rm Pic}(\mathcal{O}_K)|=2$ and ${\rm N}(\varepsilon)=-1$ for each $d\in\{10,85,185,1285,5374184665\}$ by \cite[pp. 22 and 23]{HK13a} and by computer verification (see the tables below). We infer by Corollary~\ref{corollary 2.10}, Proposition~\ref{proposition 3.1}, Theorem~\ref{theorem 3.9} and a computer verification that $D^{\prime}_{10}=\{2,5,10\}$, $D^{\prime}_{85}=\{5,10,17,34,85,170\}$, $D^{\prime}_{185}=\{37\}$, $D^{\prime}_{1285}=\{257,514\}$ and $D^{\prime}_{5374184665}=\emptyset$. This implies that $[0,3]\cup\{6\}\subseteq\Omega^{\prime}$. Let $x\in\Omega^{\prime}$. Without restriction let $x>0$. Then there is some squarefree $d\in\mathbb{N}_{\geq 2}$ such that $|{\rm Pic}(\mathcal{O}_K)|=2$, ${\rm N}(\varepsilon)=-1$ and $x=|D_d^{\prime}|$. It follows from Corollary~\ref{corollary 4.5}(1) that there are some distinct $p,q\in\mathbb{P}$ with $\{r\in\mathbb{P}:r\mid\mathsf{d}_K\}=\{p,q\}$. If $2$ is not inert, then $D_d^{\prime}\subseteq\{p,q,pq\}$ by Proposition~\ref{proposition 3.1}, and thus $x\leq 3$. Now let $2$ be inert. Then $p$ and $q$ are odd, $d\equiv 5\mod 8$ and we infer by Proposition~\ref{proposition 3.1} that $D_d^{\prime}\subseteq\{p,q,pq,2p,2q,2pq\}$. Therefore, $x\leq 6$. Now let $x\geq 4$. Observe that $\{p,q,pq\}\subseteq D_d^{\prime}$ by Theorem~\ref{theorem 2.6}(3) and Corollary~\ref{corollary 2.10}, and hence $\{2p,2q,2pq\}\cap D_d^{\prime}\not=\emptyset$. If $2pq\in D_d^{\prime}$, then $x=6$ by Theorem~\ref{theorem 2.6}(3). Therefore, we can assume without restriction that $2p\in D_d^{\prime}$. We see that $\mathcal{O}_2$ is half-factorial by Theorem~\ref{theorem 2.6}(3), and thus $|{\rm Pic}(\mathcal{O}_2)|=|{\rm Pic}(\mathcal{O}_K)|$ and $(\mathcal{O}_K^{\times}:\mathcal{O}_2^{\times})=3$. It follows from (2) that $2q\in D_d^{\prime}$. Finally, we obtain $2pq\in D_d^{\prime}$ by Corollary~\ref{corollary 2.10}, and hence $x=6$.
\end{proof}

\begin{proposition}\label{proposition 5.3}
Let $d\in\mathbb{N}_{\geq 2}$ be squarefree.
\begin{enumerate}
\item Let $d=2pq$ for some odd distinct $p,q\in\mathbb{P}$. Then each $f\in D_d$ is odd.
\item Let $d=pq$ for some $p,q\in\mathbb{P}$ such that $p\equiv 1\mod 4$ and $q\equiv 3\mod 4$ and let $u,v\in\mathbb{N}$ be such that $\varepsilon=u+v\sqrt{d}$. If $v$ is even, then $D_d=\emptyset$.
\item Let $d=pqr$ for some distinct $p,q,r\in\mathbb{P}$ such that $q<r$, $p\equiv 1\mod 4$, $q\equiv r\equiv 3\mod 4$ and $d\equiv 5\mod 8$. If $3<|D_d|<6$, then $q=3$ and either $D_d=\{p,q,2p,pq\}$ or $D_d=\{q,r,2r,qr\}$.
\item If $f\in D_d$ with $|\{r\in\mathbb{P}:r\mid f,r$ is ramified$\}|=3$, then there are some $p,q\in\mathbb{P}$ such that $p\equiv 1\mod 4$, $q\equiv 3\mod 4$, $d=pq$ and $f=2pq$.
\end{enumerate}
\end{proposition}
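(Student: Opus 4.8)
The plan is to treat the four parts separately, using repeatedly that a nonempty $D_d$ forces $|{\rm Pic}(\mathcal{O}_K)|=2$ (Theorem~\ref{theorem 4.4}), that Corollary~\ref{corollary 4.5} then determines ${\rm N}(\varepsilon)$ and the shape of $d$, and that $D_d$ is closed under ramified divisors (Theorem~\ref{theorem 2.6}(3)) and under coprime ramified products (Corollary~\ref{corollary 2.10}). For (1), if $D_d\ne\emptyset$ then $\mathsf{d}_K=8pq$ has three prime divisors, so ${\rm N}(\varepsilon)=1$ by Corollary~\ref{corollary 4.5}; writing $\varepsilon=u+v\sqrt d$ with $u^2-dv^2=1$ and $d\equiv2\mod4$ forces $v$ to be even. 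Were some $f\in D_d$ even, then the ramified prime $2$ would divide $f$, so $2\in D_d$ by Theorem~\ref{theorem 2.6}(3); the class number formula would give $(\mathcal{O}_K^{\times}:\mathcal{O}_2^{\times})=2$, i.e. $\varepsilon\notin\mathcal{O}_2=\mathbb{Z}+2\sqrt d\mathbb{Z}$, contradicting $v$ even. (This is the opening move of Case~4 in the proof of Theorem~\ref{theorem 4.4}.)

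For (2), suppose $D_d\ne\emptyset$; since $d\equiv3\mod4$ we have ${\rm N}(\varepsilon)=1$, and $v$ even gives $\varepsilon\in\mathcal{O}_2$, whence $(\mathcal{O}_K^{\times}:\mathcal{O}_2^{\times})=1$ and $|{\rm Pic}(\mathcal{O}_2)|=4\ne2$, so $2$ cannot be the ramified prime supplied by Corollary~\ref{corollary 3.10}(1). For the odd ramified primes I factor $(u-1)(u+1)=pqv^2$: here $u$ is odd, and with $v=2w$ the coprime integers $\tfrac{u\pm1}{2}$ have product $pqw^2$. If $pq$ divides a single factor, extracting squares yields a unit $a+b\sqrt d$ with $|a^2-db^2|=1$ and $a<u$, contradicting fundamentality; hence $p$ and $q$ lie on opposite factors, producing $|pa^2-qb^2|=1$ and therefore $|p(2a)^2-\tfrac{\mathsf{d}_K}{p}b^2|=|q(2b)^2-\tfrac{\mathsf{d}_K}{q}a^2|=4$. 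Thus every ramified prime fails one of the two requirements of Corollary~\ref{corollary 3.10}(1), giving $D_d=\emptyset$, a contradiction.

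For (4), the three ramified prime divisors of $f$ all divide $\mathsf{d}_K$, so $t=3$ (using $|{\rm Pic}(\mathcal{O}_K)|=2$ and Lemma~\ref{lemma 2.2}), hence ${\rm N}(\varepsilon)=1$ and $D_d=D_d'$ by Proposition~\ref{proposition 3.1}(2), so $f$ has no odd inert divisor. By Corollary~\ref{corollary 4.5}(2), $d\in\{pq,2pq,pqr\}$ in the stated congruence patterns. The case $d=2pq$ is ruled out by part~(1), which would force $f$ odd although the ramified prime $2$ must divide $f$; the case $d=pqr$ is ruled out by Lemma~\ref{lemma 3.3}, which yields a ramified $\ell\mid d$ with $|\ell a^2-\tfrac{\mathsf{d}_K}{\ell}b^2|=4$, a prime that by Theorem~\ref{theorem 2.9}(c) cannot divide any $f\in D_d$, contradicting $p,q,r\mid f$. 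Hence $d=pq$ with $p\equiv1$, $q\equiv3\mod4$; the ramified primes are then $2,p,q$, and squarefreeness with no inert factors gives $f=2pq$.

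Part~(3) is the main obstacle. I would first show, via quadratic reciprocity, that for each odd ramified $\ell\mid d$ the solvability of $|\ell a^2-\tfrac{\mathsf{d}_K}{\ell}b^2|=4$ is governed by the symbols $\big(\tfrac{p}{q}\big)$ and $\big(\tfrac{p}{r}\big)$; together with Lemma~\ref{lemma 4.3}(c) (forbidding $\big(\tfrac{p}{q}\big)=\big(\tfrac{p}{r}\big)=1$) and Lemma~\ref{lemma 3.3} (at least one prime solvable), this forces exactly one of $p,q,r$, say $\ell_0$, to be solvable and the other two, $\ell_1,\ell_2$, unsolvable. Then $\ell_0$ and all its multiples are excluded from $D_d$ by Theorem~\ref{theorem 2.9}(c) and Theorem~\ref{theorem 2.6}(3); since ${\rm N}(\varepsilon)=1$ gives $D_d=D_d'$ and $2$ is inert, $D_d\subseteq\{\ell_1,\ell_2,\ell_1\ell_2,2\ell_1,2\ell_2,2\ell_1\ell_2\}$. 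Using the divisor and product closures and the class number formula---with $(\mathcal{O}_K^{\times}:\mathcal{O}_2^{\times})\in\{1,3\}$ since $2$ is inert---I express $|D_d|$ through whether $\ell_i\in D_d$ (a Picard condition on $\mathcal{O}_{\ell_i}$) and whether $2\ell_i\in D_d$ (which requires $(\mathcal{O}_K^{\times}:\mathcal{O}_2^{\times})=3$ and $\ell_i\ne3$). A finite case check then shows that $3<|D_d|<6$ forces $\ell_1,\ell_2\in D_d$, $(\mathcal{O}_K^{\times}:\mathcal{O}_2^{\times})=3$ and exactly one $\ell_i=3$ (the configuration $\{\ell_1,\ell_2\}=\{p,r\}$ is incompatible with $q<r$, since $p\not\equiv3\mod4$), whence $|D_d|=4$ and $D_d$ equals one of the two listed sets with $q=3$. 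The delicate part is exactly this bookkeeping: computing $|{\rm Pic}(\mathcal{O}_{2\ell_i})|$ from the unit indices and tracking which products of $\ell_1,\ell_2$ and $2$ remain admissible, so that the cardinality constraint collapses to a single configuration.
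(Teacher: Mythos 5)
Parts (1), (2) and (4) of your proposal are correct and follow essentially the same route as the paper: (1) is verbatim the paper's argument (ramified $2$ would lie in $D_d$ by Theorem~\ref{theorem 2.6}(3), forcing $(\mathcal{O}_K^{\times}:\mathcal{O}_2^{\times})=2$, while $u^2-dv^2=1$ with $d\equiv 2\bmod 4$ forces $v$ even); your (2) spells out the descent on $(u-1)(u+1)=dv^2$ that the paper delegates to Case~3 of the proof of Theorem~\ref{theorem 4.4}, and correctly concludes via Corollary~\ref{corollary 3.10}(1); your (4) matches the paper except that you eliminate $d=2pq$ via part (1) where the paper invokes the claim from Case~4 of Theorem~\ref{theorem 4.4} --- both work.

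Part (3), however, has a genuine gap at the decisive step. Your counting correctly reduces to the situation $D_d=\{\ell_1,\ell_2,2\ell_1,\ell_1\ell_2\}$ with $2\ell_2\notin D_d$ and $(\mathcal{O}_K^{\times}:\mathcal{O}_2^{\times})=3$, but the conclusion $\ell_2=3$ is then asserted rather than proved, and the one mechanism you offer --- that $2\ell_i\in D_d$ ``requires $\ell_i\neq 3$'' --- is the converse of what is needed and is not justified (indeed Theorem~\ref{theorem 5.4}(7) allows $D_d=F_{q^{\prime},r^{\prime}}$, which is consistent with $2\cdot 3\in D_d$). What must be shown is the implication in the other direction: if $\ell\in D_d$, $(\mathcal{O}_K^{\times}:\mathcal{O}_2^{\times})=3$ and $2\ell\notin D_d$, then $\ell=3$. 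The paper gets this from the unit indices: conditions (b) and (c) of Theorem~\ref{theorem 4.4} pass from $\ell$ to $2\ell$ automatically, so $2\ell\notin D_d$ forces $|{\rm Pic}(\mathcal{O}_{2\ell})|\neq 2$, i.e.\ $(\mathcal{O}_K^{\times}:\mathcal{O}_{2\ell}^{\times})\neq 3\ell$; since this index is a multiple of $(\mathcal{O}_K^{\times}:\mathcal{O}_{\ell}^{\times})=\ell$ and a divisor of $3\ell$, it equals $\ell$, whence $\varepsilon^{\ell}\in\mathcal{O}_{2\ell}\subseteq\mathcal{O}_2$ and $3=(\mathcal{O}_K^{\times}:\mathcal{O}_2^{\times})$ divides $\ell$, so $\ell=3$. (Note that for $\ell\neq 3$ the index is automatically ${\rm lcm}(3,\ell)=3\ell$, which is exactly why $2\ell\in D_d$ then.) A secondary, non-fatal overclaim: your opening assertion that quadratic reciprocity together with Lemmas~\ref{lemma 4.3} and~\ref{lemma 3.3} shows that \emph{exactly} one of $p,q,r$ admits $|\ell a^2-\frac{\mathsf{d}_K}{\ell}b^2|=4$ is stronger than reciprocity alone delivers (reciprocity gives only necessary conditions for solvability); fortunately only ``at least one,'' which is Lemma~\ref{lemma 3.3}, is needed to cap the number of ramified primes in $D_d$ at two.
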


\begin{proof}
(1) Let $f\in D_d$. Assume that $f$ is even. Since $2$ is ramified, it follows from Theorem~\ref{theorem 2.6}(3) that $2\in D_d$. We infer that $(\mathcal{O}_K^{\times}:\mathcal{O}_2^{\times})=2$. By Corollary~\ref{corollary 4.5}(2), we have ${\rm N}(\varepsilon)=1$. There are some $u,v\in\mathbb{N}$ such that $\varepsilon=u+v\sqrt{d}$, and hence $u^2-dv^2=1$. Clearly, $u$ is odd. Moreover, $1\equiv u^2-2pqv^2\equiv 1-dv^2\mod 8$, and thus $v$ is even. This implies that $\varepsilon\in\mathcal{O}_2$, a contradiction.

\bigskip
(2) Let $v$ be even. Since $(\mathcal{O}_K^{\times}:\mathcal{O}_2^{\times})=1$, we have that $2\not\in D_d$. Moreover, it can be shown along the same lines as in Case 3 of the proof of Theorem~\ref{theorem 4.4} that $p,q\not\in D_d$. Therefore, $D_d=\emptyset$ by Corollary~\ref{corollary 3.10}(1).

\bigskip
(3) Let $3<|D_d|<6$. It follows from Corollaries~\ref{corollary 2.10} and~\ref{corollary 4.5} that $D_d$ contains at most two ramified primes and that ${\rm N}(\varepsilon)=1$. If $D_d$ contains at most one ramified prime $k$, then $D_d\subseteq\{k,2k\}$ by Proposition~\ref{proposition 3.1} and Theorem~\ref{theorem 4.4}, which contradicts $|D_d|>3$. Consequently, $D_d$ contains precisely two distinct ramified primes $k$ and $\ell$. We infer by Corollary~\ref{corollary 2.10}, Proposition~\ref{proposition 3.1} and Theorem~\ref{theorem 4.4} that $\{k,\ell,k\ell\}\subseteq D_d\subseteq\{k,\ell,2k,2\ell,k\ell,2k\ell\}$ and that $\{2k,2\ell\}\cap D_d$ is a singleton. Without restriction let $2k\in D_d$ and $2\ell\not\in D_d$. Then $D_d=\{k,\ell,2k,k\ell\}$. It is sufficient to show that $\ell=3$. It follows from Theorems~\ref{theorem 2.6}(3) and~\ref{theorem 4.4} that $|{\rm Pic}(\mathcal{O}_{2\ell})|\not=|{\rm Pic}(\mathcal{O}_{\ell})|=|{\rm Pic}(\mathcal{O}_2)|=|{\rm Pic}(\mathcal{O}_K)|=2$. Since $2$ is inert, we find that $(\mathcal{O}_K^{\times}:\mathcal{O}_2^{\times})=3$, $\ell=(\mathcal{O}_K^{\times}:\mathcal{O}_{\ell}^{\times})\mid (\mathcal{O}_K^{\times}:\mathcal{O}_{2\ell}^{\times})\mid 3\ell$ and $(\mathcal{O}_K^{\times}:\mathcal{O}_{2\ell}^{\times})\not=3\ell$. Therefore, $(\mathcal{O}_K^{\times}:\mathcal{O}_{2\ell}^{\times})=\ell$, and thus $\varepsilon^{\ell}\in\mathcal{O}_2$. Since $(\mathcal{O}_K^{\times}:\mathcal{O}_2^{\times})=3$, we have $3\mid\ell$, and hence $\ell=3$.

\bigskip
(4) Let $f\in D_d$ be such that $|\{r\in\mathbb{P}:r\mid f,r$ is ramified$\}|=3$. Observe that $|\{r\in\mathbb{P}:r\mid\mathsf{d}_K\}|=3$. We infer by Lemma~\ref{lemma 3.3}, Corollary~\ref{corollary 4.5}(2) and the claim in the proof of Theorem~\ref{theorem 4.4} that there are some $p,q\in\mathbb{P}$ such that $p\equiv 1\mod 4$, $q\equiv 3\mod 4$ and $d=pq$. It follows from Proposition~\ref{proposition 3.4} that $f=2pq$.
\end{proof}

For each $a,b,c\in\mathbb{N}$ and $S\subseteq\mathbb{N}$ set $F_a=\{a\}$, $F_{a^{\prime}}=\{a,2a\}$, $F_{a,b}=\{a,b,ab\}$, $F_{a^{\prime},b}=\{a,b,2a,ab\}$, $F_{a,b^{\prime}}=\{a,b,2b,ab\}$, $F_{a^{\prime},b^{\prime}}=\{a,b,2a,2b,ab,2ab\}$, $F_{a,b,c}=\{a,b,c,ab,ac,bc,abc\}$ and $aS=\{as:s\in S\}$.

\begin{theorem}\label{theorem 5.4}
Let $D_d\not=\emptyset$ and let $\mathcal{T}=\{\ell\in\mathbb{P}:\ell\equiv 3\mod 4,\ell$ is inert$\,\}$. Then precisely one of the following conditions is satisfied:
\begin{enumerate}
\item $d=2p$ for some $p\in\mathbb{P}$ with $p\equiv 1\mod 4$ and $D_d\in\{F_2,F_{2,p}\cup pS:S\subseteq\mathcal{T}\}$.
\item $d=pq$ for some odd $p,q\in\mathbb{P}$ such that $p<q$, $p\equiv q\equiv 1\mod 4$, $d\equiv 1\mod 8$ and $D_d\in\{F_p\cup pS,F_q\cup qT,F_{p,q}\cup pS\cup qT\cup pq(S\cap T):S,T\subseteq\mathcal{T}\}$.
\item $d=pq$ for some odd $p,q\in\mathbb{P}$ such that $p<q$, $p\equiv 1\mod 4$, $q\equiv 1\mod 4$, $d\equiv 5\mod 8$ and $D_d\in\{F_p\cup pS,F_q\cup qT,F_{p^{\prime}}\cup pS\cup 2pU,F_{q^{\prime}}\cup qT\cup 2qV,F_{p,q}\cup pS\cup qT\cup pq(S\cap T),F_{p^{\prime},q^{\prime}}\cup pS\cup qT\cup pq(S\cap T)\cup 2pU\cup 2qV\cup 2pq(U\cap V):U\subseteq S\subseteq\mathcal{T},V\subseteq T\subseteq\mathcal{T}\}$.
\item $d=pq$ for some odd distinct $p,q\in\mathbb{P}$ such that $p\equiv 1\mod 4$, $q\equiv 3\mod 4$ and $D_d\in\{F_2,F_p,F_q,F_{2,p},F_{2,q},F_{p,q},F_{2,p,q}\}$.
\item $d=2pq$ for some odd $p,q\in\mathbb{P}$ with $p<q$ and $D_d\in\{F_p,F_q,F_{p,q}\}$.
\item $d=pqr$ for some distinct $p,q,r\in\mathbb{P}$ such that $q<r$, $p\equiv 1\mod 4$, $q\equiv r\equiv 3\mod 4$, $d\equiv 1\mod 8$ and $D_d\in\{F_p,F_q,F_r,F_{p,q},F_{p,r},F_{q,r}\}$.
\item $d=pqr$ for some distinct $p,q,r\in\mathbb{P}$ such that $q<r$, $p\equiv 1\mod 4$, $q\equiv r\equiv 3\mod 4$, $d\equiv 5\mod 8$ and $D_d\in\{F_p,F_q,F_r,F_{p^{\prime}},F_{q^{\prime}},F_{r^{\prime}},F_{p,q},F_{p^{\prime},q},F_{p^{\prime},q^{\prime}},F_{p,r},F_{p^{\prime},r^{\prime}},F_{q,r},F_{q,r^{\prime}},F_{q^{\prime},r^{\prime}}\}$.
\end{enumerate}
\end{theorem}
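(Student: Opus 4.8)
The plan is to first cut the problem down to seven explicit shapes of $d$ and then, for each shape, to determine $D_d$ exactly from two organizing principles. Since $D_d\neq\emptyset$, Theorem~\ref{theorem 4.4} gives $|{\rm Pic}(\mathcal{O}_K)|=2$, and Corollary~\ref{corollary 4.5} pins down $t=|\{p\in\mathbb{P}:p\mid\mathsf{d}_K\}|\in\{2,3\}$ together with the admissible factorizations of $d$: the case ${\rm N}(\varepsilon)=-1$ ($t=2$) yields $d=2p$ or $d=pq$ with $p\equiv q\equiv1\bmod4$, while ${\rm N}(\varepsilon)=1$ ($t=3$) yields $d=pq$ ($p\equiv1,q\equiv3$), $d=2pq$, or $d=pqr$ ($p\equiv1,q\equiv r\equiv3\bmod4$). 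Splitting the two shapes $d=pq$ (both factors $\equiv1$) and $d=pqr$ further by $d\bmod8$ (that is, by whether $2$ splits or is inert) produces exactly the seven mutually exclusive and exhaustive alternatives; it then remains to identify $D_d$ in each.

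Two engines handle every case. First, Theorem~\ref{theorem 2.6}(3) gives divisor closure --- for $h\mid f$ with $f\in D_d$ one has $h\in D_d$ iff $h$ is divisible by a ramified prime --- and Corollary~\ref{corollary 2.10} glues coprime ramified parts. Second, for squarefree $f$ the identity $\mathcal{O}_f=\bigcap_{p\mid f}\mathcal{O}_p$ yields $(\mathcal{O}_K^{\times}:\mathcal{O}_f^{\times})={\rm lcm}_{p\mid f}(\mathcal{O}_K^{\times}:\mathcal{O}_p^{\times})$. Writing $m_p=p$ for a ramified $p$ and $m_\ell=\ell+1$ for an inert $\ell$, the class number formula (and the absence of split prime divisors from condition (b)) becomes
\[
|{\rm Pic}(\mathcal{O}_f)|=\frac{2\prod_{p\mid f}m_p}{{\rm lcm}_{p\mid f}(\mathcal{O}_K^{\times}:\mathcal{O}_p^{\times})}.
\]
Because $(\mathcal{O}_K^{\times}:\mathcal{O}_p^{\times})\leq m_p$, the value $2$ is attained precisely when every $p\mid f$ is \emph{good} (i.e.\ $|{\rm Pic}(\mathcal{O}_p)|=2$) and the $m_p$ are pairwise coprime. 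Since $m_\ell=\ell+1\equiv0\bmod4$ for each odd inert $\ell$ (necessarily $\equiv3\bmod4$ by Lemma~\ref{lemma 2.3}), no conductor carries two odd inert primes, and when $2$ is ramified no even conductor carries an odd inert prime. Together with Theorem~\ref{theorem 3.9} (for ${\rm N}(\varepsilon)=-1$) and Theorem~\ref{theorem 4.4} (where the Legendre condition (c) is an extra constraint) this is the membership test I would use throughout.

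For the cases with ${\rm N}(\varepsilon)=-1$ I would run the coprimality test directly. In case~(1), Proposition~\ref{proposition 5.2}(1) gives $2\in D_d$ and confines even conductors to $\{2,2p\}$; the test then kills $2\ell$, $2p\ell$ and $p\ell_1\ell_2$, so the outcome is governed solely by whether $p$ is good, giving $D_d=F_2$ or $D_d=F_{2,p}\cup pS$ with $S=\{\ell\in\mathcal{T}:p\ell\in D_d\}$. Cases~(2) and~(3) run identically with two ramified primes $p,q$ and with $2$ either split (no even conductors) or inert with $m_2=3$; setting $S=\{\ell:p\ell\in D_d\}$, $T=\{\ell:q\ell\in D_d\}$ and, in case~(3), $U=\{\ell:2p\ell\in D_d\}\subseteq S$, $V=\{\ell:2q\ell\in D_d\}\subseteq T$, divisor closure and gluing force $pq\ell\in D_d\Leftrightarrow\ell\in S\cap T$ and $2pq\ell\in D_d\Leftrightarrow\ell\in U\cap V$, while the dichotomies over which of $p,q,2p,2q$ lie in $D_d$ reproduce the listed unions.

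For the cases with ${\rm N}(\varepsilon)=1$, Proposition~\ref{proposition 3.1}(2) gives $D_d=D_d'$, so conductors involve only ramified primes and, when $2$ is inert, the prime $2$; divisor closure and gluing then identify $D_d$ with the set of all admissible subset-products of its good prime members, so I need only decide which ramified primes lie in $D_d$ and which doublings $2s$ survive. Case~(4) has three ramified primes $2,p,q$, so $D_d$ is one of the seven subset-product sets; case~(5) removes $2$ by Proposition~\ref{proposition 5.3}(1), leaving at most the two ramified primes $p,q$; case~(6) forbids $pqr$ by Proposition~\ref{proposition 5.3}(4), again capping the ramified part at two primes. The genuinely hard part is case~(7): here $2$ is inert ($m_2=3$), $pqr\notin D_d$ caps the ramified part at two primes, and the precise $2$-attachments are controlled by delicate Legendre-symbol conditions --- condition (c) of Theorem~\ref{theorem 4.4}, the Brown-type class-number restrictions of Lemmas~\ref{lemma 4.2} and~\ref{lemma 4.3}, and the collapse ${\rm lcm}(3,3)\neq9$ isolated in Proposition~\ref{proposition 5.3}(3). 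Explaining the resulting asymmetry --- why $F_{p',q}$ and $F_{q,r'}$ occur only when the undoubled ramified prime equals $3$, whereas $F_{p',q'}$, $F_{p',r'}$, $F_{q',r'}$ force both primes $\neq3$ --- is the main obstacle, and I would organize it along the size strata of Proposition~\ref{proposition 5.1} and the configuration list of Proposition~\ref{proposition 5.3} to confirm that the fourteen options are exactly those compatible with all constraints.
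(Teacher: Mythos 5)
Your proposal follows essentially the same route as the paper: the seven-fold case split via Theorem~\ref{theorem 4.4} and Corollary~\ref{corollary 4.5}, divisor closure from Theorem~\ref{theorem 2.6}(3) plus gluing from Corollary~\ref{corollary 2.10}, Propositions~\ref{proposition 5.2} and~\ref{proposition 5.3} for the even-conductor and three-ramified-prime restrictions, and Proposition~\ref{proposition 5.3}(3) to force $q=3$ in the asymmetric subcase of~(7). Your ``membership test'' $(\mathcal{O}_K^{\times}:\mathcal{O}_f^{\times})={\rm lcm}_{p\mid f}(\mathcal{O}_K^{\times}:\mathcal{O}_p^{\times})$ is a correct and slightly more systematic packaging of the class-number-formula computations the paper performs case by case, and although you only sketch Case~7, you reduce it to exactly the ingredients (Propositions~\ref{proposition 5.3}(3) and~(4)) from which the paper's own enumeration is immediate.
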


\begin{proof}
It is clear that at most one of the conditions is satisfied. By Corollary~\ref{corollary 4.5}, it is sufficient to consider the following cases. In this proof we use \cite[Proposition 4.15]{Br-Ge-Re20}, Theorem~\ref{theorem 2.6}, Corollary~\ref{corollary 2.10}, Propositions~\ref{proposition 3.1} and~\ref{proposition 3.4}, Corollary~\ref{corollary 3.10} and Theorem~\ref{theorem 4.4} without further mention.

\medskip
CASE 1: $d=2p$ for some $p\in\mathbb{P}$ with $p\equiv 1\mod 4$. Note that ${\rm N}(\varepsilon)=-1$ by Corollary~\ref{corollary 4.5}(1) and $2\in D_d$ by Proposition~\ref{proposition 5.2}(1). If $p\not\in D_d$, then $D_d=F_2$ by Proposition~\ref{proposition 5.2}(1). Now let $p\in D_d$ and set $S=\{s\in\mathbb{P}:s$ is odd and inert and $ps\in D_d\}$. Then $S\subseteq\mathcal{T}$ by Lemma~\ref{lemma 2.3} and $D_d=F_{2,p}\cup pS$ by Proposition~\ref{proposition 5.2}(1).

\medskip
CASE 2: $d=pq$ for some odd $p,q\in\mathbb{P}$ such that $p<q$, $p\equiv q\equiv 1\mod 4$ and $d\equiv 1\mod 8$. By Corollary~\ref{corollary 4.5}(1), we have ${\rm N}(\varepsilon)=-1$. Set $S=\{s\in\mathbb{P}:s$ is odd and inert and $ps\in D_d\}$, $T=\{s\in\mathbb{P}:s$ is odd and inert and $qs\in D_d\}$. Then $S,T\subseteq\mathcal{T}$ by Lemma~\ref{lemma 2.3}. Moreover, $\{s\in\mathbb{P}:s$ is odd and inert and $pqs\in D_d\}=S\cap T$. If $p\in D_d$ and $q\not\in D_d$, then $D_d=F_p\cup pS$. If $p\not\in D_d$ and $q\in D_d$, then $D_d=F_q\cup qT$. Finally, if $p,q\in D_d$, then $D_d=F_{p,q}\cup pS\cup qT\cup pq(S\cap T)$.

\medskip
CASE 3: $d=pq$ for some odd $p,q\in\mathbb{P}$ such that $p<q$, $p\equiv q\equiv1\mod 4$ and $d\equiv 5\mod 8$. Observe that ${\rm N}(\varepsilon)=-1$ by Corollary~\ref{corollary 4.5}(1). Set $S=\{s\in\mathbb{P}:s$ is odd and inert and $ps\in D_d\}$, $T=\{s\in\mathbb{P}:s$ is odd and inert and $qs\in D_d\}$, $U=\{s\in\mathbb{P}:s$ is odd and inert and $2ps\in D_d\}$, $V=\{s\in\mathbb{P}:s$ is odd and inert and $2qs\in D_d\}$. Then $U\subseteq S\subseteq\mathcal{T}$ and $V\subseteq T\subseteq\mathcal{T}$ by Lemma~\ref{lemma 2.3}. Furthermore, $\{s\in\mathbb{P}:s$ is odd and inert and $pqs\in D_d\}=S\cap T$ and $\{s\in\mathbb{P}:s$ is odd and inert and $2pqs\in D_d\}=U\cap V$. If $p\in D_d$, $2p\not\in D_d$ and $q\not\in D_d$, then $D_d=F_p\cup pS$. If $p\not\in D_d$, $q\in D_d$ and $2q\not\in D_d$, then $D_d=F_q\cup qT$. If $p,q\in D_d$ and $2p,2q\not\in D_d$, then $D_d=F_{p,q}\cup pS\cup qT\cup pq(S\cap T)$. If $2p\in D_d$ and $q\not\in D_d$, then $D_d=F_{p^{\prime}}\cup pS\cup 2pU$. If $p\not\in D_d$ and $2q\in D_d$, then $D_d=F_{q^{\prime}}\cup qT\cup 2qV$. By Proposition~\ref{proposition 5.2}(2), it remains to consider the case $2p,2q\in D_d$. We infer that $D_d=F_{p^{\prime},q^{\prime}}\cup pS\cup qT\cup pq(S\cap T)\cup 2pU\cup 2qV\cup 2pq(U\cap V)$.

\medskip
CASE 4: $d=pq$ for some odd distinct $p,q\in\mathbb{P}$ such that $p\equiv 1\mod 4$ and $q\equiv 3\mod 4$. If $\ell$ is the only ramified prime that is contained in $D_d$, then $D_d=F_{\ell}$. If $k$ and $\ell$ are precisely the two distinct ramified primes that are contained in $D_d$, then $D_d=F_{k,\ell}$. Finally, if $2,p,q\in D_d$, then $D_d=F_{2,p,q}$.

\medskip
CASE 5: $d=2pq$ for some odd $p,q\in\mathbb{P}$ with $p<q$. By Proposition~\ref{proposition 5.3}(1), it follows that each $f\in D_d$ is odd. If $p\in D_d$ and $q\not\in D_d$, then $D_d=F_p$. If $p\not\in D_d$ and $q\in D_d$, then $D_d=F_q$. Finally, if $p,q\in D_d$, then $D_d=F_{p,q}$.

\medskip
CASE 6: $d=pqr$ for some distinct $p,q,r\in\mathbb{P}$ such that $q<r$, $p\equiv 1\mod 4$, $q\equiv r\equiv 3\mod 4$ and $d\equiv 1\mod 8$. Since $2$ is neither inert nor ramified, we find that each $f\in D_d$ is odd. By Proposition~\ref{proposition 5.3}(4), $D_d$ cannot contain more than two ramified primes. If $\ell$ is the only ramified prime with $\ell\in D_d$, then $D_d=F_{\ell}$. If $k$ and $\ell$ are the two distinct ramified primes that are contained in $D_d$, then $D_d=F_{k,\ell}$.

\medskip
CASE 7: $d=pqr$ for some distinct $p,q,r\in\mathbb{P}$ such that $q<r$, $p\equiv 1\mod 4$, $q\equiv r\equiv 3\mod 4$ and $d\equiv 5\mod 8$. Proposition~\ref{proposition 5.3}(4) implies that $D_d$ contains at most two ramified primes. If $\ell$ is the only ramified prime that is contained in $D_d$ and $2\ell\not\in D_d$, then $D_d=F_{\ell}$. If $\ell$ is the only ramified prime that is contained in $D_d$ and $2\ell\in D_d$, then $D_d=F_{\ell^{\prime}}$. Now let $k$ and $\ell$ be the two distinct ramified primes that are contained in $D_d$. If $2k,2\ell\not\in D_d$, then $D_d=F_{k,\ell}$. If $2k,2\ell\in D_d$, then $D_d=F_{k^{\prime},\ell^{\prime}}$. Finally, we can assume without restriction that $2k\in D_d$ and $2\ell\not\in D_d$. Note that $3<|D_d|<6$, and hence $\ell=q=3$ and $D_d=F_{k^{\prime},q}=F_{q,k^{\prime}}$ by Proposition~\ref{proposition 5.3}(3).
\end{proof}

Let $D_d\not=\emptyset$ and set $m_1=2$, $m_2=m_5=3$, $m_3=m_6=6$, $m_4=7$ and $m_7=14$. For each $(A,B)\in\{(i,j):i\in [1,7],j\in [1,m_i]\}$, we say that $d$ is of type $A$/form $B$ if $d$ satisfies the $A$th condition of Theorem~\ref{theorem 5.4} and $D_d^{\prime}=F$ where $F$ is the first set (in the union) of the $B$th entry in the set of possible choices for $D_d$ in Theorem~\ref{theorem 5.4}. For instance, $d$ is of type $3$/form $2$ if and only if $d=pq$ and $D_d^{\prime}=F_q$ for some odd $p,q\in\mathbb{P}$ such that $p<q$, $p\equiv q\equiv 1\mod 4$ and $d\equiv 5\mod 8$. Furthermore, $d$ is of type $5$/form $3$ if and only if $d=2pq$ and $D_d^{\prime}=F_{p,q}$ for some odd $p,q\in\mathbb{P}$ with $p<q$. Next we want to point out that (with the possible exception of type $4$/form $1$) all types/forms actually occur. Note that the values of $d$ below are the smallest possible values for the given type and form.

\begin{table}[htbp]
\centering
\begin{tabular}{|c|c|c|c|c|c|c|c|c|c|c|c|}
\hline
$d$ & $1221562$ & $10$ & $302737$ & $185$ & $65$ & $4159957$ & $2581$ & $184861$ & $1285$ & $485$ & $85$\\
\hline
$D_d^{\prime}$ & $F_2$ & $F_{2,5}$ & $F_{97}$ & $F_{37}$ & $F_{5,13}$ & $F_{1777}$ & $F_{89}$ & $F_{401^{\prime}}$ & $F_{257^{\prime}}$ & $F_{5,97}$ & $F_{5^{\prime},17^{\prime}}$\\
\hline
$\textnormal{type}$ & $1$ & $1$ & $2$ & $2$ & $2$ & $3$ & $3$ & $3$ & $3$ & $3$ & $3$\\
\hline
$\textnormal{form}$ & $1$ & $2$ & $1$ & $2$ & $3$ & $1$ & $2$ & $3$ & $4$ & $5$ & $6$\\
\hline
\end{tabular}
\end{table}

\begin{table}[htbp]
\centering
\begin{tabular}{|c|c|c|c|c|c|c|c|c|c|c|c|c|c|c|c|}
\hline
$d$ & $267$ & $779$ & $87$ & $115$ & $51$ & $15$ & $30$ & $70$ & $66$ & $665$ & $1065$ & $1265$ & $385$ & $273$ & $105$\\
\hline
$D_d^{\prime}$ & $F_{89}$ & $F_{19}$ & $F_{2,29}$ & $F_{2,23}$ & $F_{17,3}$ & $F_{2,5,3}$ & $F_3$ & $F_7$ & $F_{3,11}$ & $F_5$ & $F_3$ & $F_{23}$ & $F_{5,7}$ & $F_{13,7}$ & $F_{3,7}$\\
\hline
$\textnormal{type}$ & $4$ & $4$ & $4$ & $4$ & $4$ & $4$ & $5$ & $5$ & $5$ & $6$ & $6$ & $6$ & $6$ & $6$ & $6$\\
\hline
$\textnormal{form}$ & $2$ & $3$ & $4$ & $5$ & $6$ & $7$ & $1$ & $2$ & $3$ & $1$ & $2$ & $3$ & $4$ & $5$ & $6$\\
\hline
\end{tabular}
\end{table}

\clearpage

\begin{table}[htbp]
\centering
\begin{tabular}{|c|c|c|c|c|c|c|c|c|c|c|c|c|c|c|}
\hline
$d$ & $9085$ & $861$ & $6357$ & $1965$ & $6461$ & $1005$ & $885$ & $165$ & $1085$ & $2301$ & $429$ & $1173$ & $357$ & $805$\\
\hline
$D_d^{\prime}$ & $F_5$ & $F_3$ & $F_{163}$ & $F_{5^{\prime}}$ & $F_{7^{\prime}}$ & $F_{67^{\prime}}$ & $F_{5,3}$ & $F_{5^{\prime},3}$ & $F_{5^{\prime},7^{\prime}}$ & $F_{13,59}$ & $F_{13^{\prime},11^{\prime}}$ & $F_{3,23}$ & $F_{3,7^{\prime}}$ & $F_{7^{\prime},23^{\prime}}$\\
\hline
$\textnormal{type}$ & $7$ & $7$ & $7$ & $7$ & $7$ & $7$ & $7$ & $7$ & $7$ & $7$ & $7$ & $7$ & $7$ & $7$\\
\hline
$\textnormal{form}$ & $1$ & $2$ & $3$ & $4$ & $5$ & $6$ & $7$ & $8$ & $9$ & $10$ & $11$ & $12$ & $13$ & $14$\\
\hline
\end{tabular}
\end{table}

Let $d\in\mathbb{N}_{\geq 2}$ be squarefree and let $u,v\in\mathbb{N}$ be such that $\varepsilon=u+v\omega$. Next we want to discuss the sequence of squarefree integers $d\in\mathbb{N}_{\geq 2}$ for which $d\mid v$. This sequence of integers was investigated in \cite{Ch-Sa14,Mo13,St-Wi88}. The first eight members (cf. OEIS A135735) are well known. The remaining members (that we were able to discover) may be known too, but we were unable to find them in the literature. We present the first $17$ members of the sequence in question below. Our main motivation to identify more members was twofold. On the one hand, we wanted to provide an example of a squarefree number $d\in\mathbb{N}_{\geq 2}$ such that $|{\rm Pic}(\mathcal{O}_K)|=2$, ${\rm N}(\varepsilon)=-1$ and $D_d=\emptyset$. (This is used in the proof of Proposition~\ref{proposition 5.2}(3).) On the other hand, we wanted to give an example of $d$ that is of type $4$/form $1$. (This is still an open problem.)

\smallskip
It is an easy consequence of Proposition~\ref{proposition 5.3}(2) and Theorems~\ref{theorem 4.4} and~\ref{theorem 5.4} (together with the arguments in Case 3 of the proof of Theorem~\ref{theorem 4.4}) that $d$ is of type $4$/form $1$ if and only if there are some $p,q\in\mathbb{P}$ such that $p\equiv 5\mod 8$, $q\equiv 3\mod 4$, $d=pq$, $|{\rm Pic}(\mathcal{O}_K)|=2$, $v$ is odd and $d\mid v$. Also note that if $d=209991$, $p=69997$ and $q=3$, then $p,q\in\mathbb{P}$, $p\equiv 5\mod 8$, $q\equiv 3\mod 4$, $d=pq$, $|{\rm Pic}(\mathcal{O}_K)|=2$, $v$ is even and $d\mid v$.

\begin{remark}\label{remark 5.5}
Let $d\in [2,1.5\cdot 10^{12}]$ be squarefree and let $u,v\in\mathbb{N}$ be such that $\varepsilon=u+v\omega$. Then $d\mid v$ if and only if $d\in\{46,430,1817,58254,209991,1752299,3124318,4099215,5374184665,6459560882\}\cup\{16466394154,20565608894,25666082990,117477414815,125854178626,1004569189366,1188580642033\}$.
\end{remark}

\begin{proof}
This can be proved by doing a computer search and thereby using the large step algorithm/small step algorithm of \cite{St-Wi88}.
\end{proof}

Our implementation of the large step algorithm/small step algorithm was written in C and the program was compiled with GCC-12.2.0. For the computer run we only utilized privately owned hardware. We used 140 CPU cores (with hyperthreading and a clock rate around 3.8 GHz on average) and spent approximately 650 hours for the search in total. We did an exhaustive search (for values of $d$) up to $10^{10}$ with the small step algorithm and (for values of $d$) up to $1.5\cdot 10^{12}$ with the large step algorithm. We verified each of the $17$ values for $d$ with both the small step algorithm and the large step algorithm as well as with Mathematica 12.0.0 and Pari/GP 2.15.2. For more information on the numbers involved, we refer to \cite{Mo13,St-Wi88}.

\smallskip
We continue with a few facts about the squarefree numbers in Remark~\ref{remark 5.5}. These facts were obtained by using both Mathematica 12.0.0 and Pari/GP 2.15.2. Let $\beta\in [0,7]$ be such that $d\equiv\beta\mod 8$ and let $t=|\{p\in\mathbb{P}:p\mid\mathsf{d}_K\}|$.

\begin{table}[htbp]
\centering
\begin{tabular}{|c|c|c|c|c|c|c|c|c|c|}
\hline
$d$ & $46$ & $430$ & $1817$ & $58254$ & $209991$ & $1752299$ & $3124318$ & $4099215$ & $5374184665$\\
\hline
$\beta$ & $6$ & $6$ & $1$ & $6$ & $7$ & $3$ & $6$ & $7$ & $1$\\
\hline
$t$ & $2$ & $3$ & $2$ & $5$ & $3$ & $4$ & $2$ & $4$ & $2$\\
\hline
${\rm N}(\varepsilon)$ & $1$ & $1$ & $1$ & $1$ & $1$ & $1$ & $1$ & $1$ & $-1$\\
\hline
$|{\rm Pic}(\mathcal{O}_K)|$ & $1$ & $2$ & $1$ & $8$ & $2$ & $4$ & $1$ & $4$ & $2$\\
\hline
\end{tabular}
\end{table}

\begin{table}[htbp]
\centering
\begin{tabular}{|c|c|c|c|c|c|}
\hline
$d$ & $6459560882$ & $16466394154$ & $20565608894$ & $25666082990$ & $117477414815$\\
\hline
$\beta$ & $2$ & $2$ & $6$ & $6$ & $7$\\
\hline
$t$ & $4$ & $4$ & $3$ & $4$ & $5$\\
\hline
${\rm N}(\varepsilon)$ & $1$ & $1$ & $1$ & $1$ & $1$\\
\hline
$|{\rm Pic}(\mathcal{O}_K)|$ & $4$ & $32$ & $2$ & $8$ & $8$\\
\hline
\end{tabular}
\end{table}

\begin{table}[htbp]
\centering
\begin{tabular}{|c|c|c|c|}
\hline
$d$ & $125854178626$ & $1004569189366$ & $1188580642033$\\
\hline
$\beta$ & $2$ & $6$ & $1$\\
\hline
$t$ & $4$ & $2$ & $3$\\
\hline
${\rm N}(\varepsilon)$ & $1$ & $1$ & $1$\\
\hline
$|{\rm Pic}(\mathcal{O}_K)|$ & $8$ & $1$ & $2$\\
\hline
\end{tabular}
\end{table}

For the sake of completeness, we want to point out that our implementation of the large step algorithm of \cite{St-Wi88} did find the number $d=17451248829$, while our implementation of the small step algorithm of \cite{St-Wi88} did not find this number. For a more elaborate discussion of this phenomenon (which can only occur if $d$ is divisible by $3$), we refer to \cite[p. 621]{St-Wi88}. Observe that if $d=17451248829$ and $u,v,u^{\prime},v^{\prime},u^{\prime\prime},v^{\prime\prime}\in\mathbb{N}$ are such that $\varepsilon=u+v\omega$ and $\varepsilon^3=u^{\prime}+v^{\prime}\omega=u^{\prime\prime}+v^{\prime\prime}\sqrt{d}$, then $d\nmid v$, $d\mid 3v$, $d\mid v^{\prime}$ and $d\mid v^{\prime\prime}$. Moreover, if $d=17451248829$, then $\beta=5$, $t=4$, ${\rm N}(\varepsilon)=1$ and $|{\rm Pic}(\mathcal{O}_K)|=4$.

\bigskip
Let $N_n=|\{\mathcal{O}_f:d\in\mathbb{N}_{\geq 2}, d$ is squarefree, $f\in D_d$ and $f^2\mathsf{d}_K\leq 10^n\}|$ for each $n\in\mathbb{N}$. In other words if $n\in\mathbb{N}$, then $N_n$ is the number of orders in real quadratic number fields for which the set of distances is unusual and whose discriminant is at most $10^n$.

\begin{remark}\label{remark 5.6}
The first few values of $N_n$ are given in the table below. Moreover, $160,240,416,540,560,$ $928,945$ and $1000$ are precisely the discriminants $\leq 1000$ of the orders $\mathcal{O}$ with $\min\Delta(\mathcal{O})>1$.
\begin{table}[htbp]
\centering
\begin{tabular}{|c|c|c|c|c|c|c|c|c|}
\hline
$n$ & $2$ & $3$ & $4$ & $5$ & $6$ & $7$ & $8$ & $9$\\
\hline
$N_n$ & $0$ & $8$ & $80$ & $583$ & $4455$ & $36308$ & $311437$ & $2741750$\\
\hline
\end{tabular}
\end{table}
\end{remark}

\begin{proof}
This can be proved by doing a computer search and with the aid of Theorems~\ref{theorem 3.9}, {}~\ref{theorem 4.4} and~\ref{theorem 5.4}.
\end{proof}

For the computer search we used both Mathematica 12.0.0 and Pari/GP 2.15.2. The computer hardware that was used for this specific search (an i5-2500) was provided by the Karl-Franzens-Universit\"at Graz.

\smallskip
Let $\mathcal{D}=\{\mathcal{O}_f:d,f\in\mathbb{N}, d>1, d$ is squarefree, $\min\Delta(\mathcal{O}_f)>1\}$ be the set of all orders in real quadratic number fields for which the set of distances is unusual.

\begin{proposition}\label{proposition 5.7}
Let $d\in\mathbb{N}_{\geq 2}$ be squarefree.
\begin{enumerate}
\item Let $d\in\{430,209991,5374184665,20565608894,1188580642033\}$. Then for each ramified $p\in\mathbb{P}$, we have $|{\rm Pic}(\mathcal{O}_p)|\not=|{\rm Pic}(\mathcal{O}_K)|=2$.
\item If $\{2^p-1:p\in\mathbb{P},p\equiv 3\mod 4\}\cap\mathbb{P}$ is infinite, then $\mathcal{D}$ is infinite.
\item If $\{p\in\mathbb{P}:p\equiv 1\mod 4,|{\rm Pic}(\mathcal{O}_{\mathbb{Q}(\sqrt{2p})})|=2,{\rm N}(\varepsilon_{\mathbb{Q}(\sqrt{2p})})=-1\}$ is infinite, then $\mathcal{D}$ is infinite.
\end{enumerate}
\end{proposition}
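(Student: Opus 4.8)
The plan is to handle the three parts separately, with part~(1) carrying essentially all of the arithmetic content and parts~(2) and~(3) being short deductions from results established earlier in the paper. For part~(1) the starting point is that each listed $d$ belongs to the sequence of Remark~\ref{remark 5.5}, so writing $\varepsilon=u+v\omega$ with $u,v\in\mathbb{N}$ we have $d\mid v$; moreover the accompanying tables record $|{\rm Pic}(\mathcal{O}_K)|=2$ for each of these $d$, which already supplies the asserted equality. It remains to prove $|{\rm Pic}(\mathcal{O}_p)|\neq 2$ for every ramified prime $p$, and the key step is to show $\varepsilon\in\mathcal{O}_p$ for each such $p$. For odd ramified $p$ this is immediate: $p\mid d\mid v$, so $v\omega\in p\omega\mathbb{Z}$ and hence $\varepsilon=u+v\omega\in\mathbb{Z}+p\omega\mathbb{Z}=\mathcal{O}_p$. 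The only case not covered this way is $p=2$ when $2$ is ramified but $2\nmid d$, which among the listed values occurs solely for $d=209991$ (where $d\equiv 3\mod 4$); here I would invoke the computationally verified fact, already noted in the text preceding Remark~\ref{remark 5.5}, that $v$ is even, so once again $\varepsilon\in\mathcal{O}_2$.

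Once $\varepsilon\in\mathcal{O}_p$ is known, I would note that $\mathcal{O}_p=\mathbb{Z}+p\omega\mathbb{Z}$ is stable under conjugation (since $\overline{\omega}\in\{-\omega,1-\omega\}$), so $\overline{\varepsilon}\in\mathcal{O}_p$ and therefore $\varepsilon^{-1}=\pm\overline{\varepsilon}\in\mathcal{O}_p$. Consequently $\mathcal{O}_K^{\times}=\langle -1,\varepsilon\rangle\subseteq\mathcal{O}_p^{\times}$, whence $(\mathcal{O}_K^{\times}:\mathcal{O}_p^{\times})=1$. Feeding this into the class number formula for the ramified prime $p$, where $\left(\frac{\mathsf{d}_K}{p}\right)=0$ so that the Euler factor equals $1$, gives $|{\rm Pic}(\mathcal{O}_p)|=|{\rm Pic}(\mathcal{O}_K)|\,p=2p\neq 2$, as required.

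For part~(2), each prime $p\equiv 3\mod 4$ with $2^p-1$ prime falls under Example~\ref{example 3.2}(1) with $\mathsf{d}_K=40$ (that is, $d=10$), producing an order $\mathcal{O}_{5(2^p-1)}$ in $\mathbb{Q}(\sqrt{10})$ with $\min\Delta(\mathcal{O}_{5(2^p-1)})=2>1$, hence $\mathcal{O}_{5(2^p-1)}\in\mathcal{D}$. Since $p\mapsto 2^p-1$ is strictly increasing, distinct admissible $p$ give distinct conductors $5(2^p-1)$ and thus distinct orders, so if the hypothesised set is infinite then $\mathcal{D}$ is infinite. For part~(3), each prime $p\equiv 1\mod 4$ in the hypothesised set yields $d=2p$ with $|{\rm Pic}(\mathcal{O}_{\mathbb{Q}(\sqrt{2p})})|=2$ and ${\rm N}(\varepsilon)=-1$, so Proposition~\ref{proposition 5.2}(1) gives $2\in D_{2p}$, i.e.\ $\mathcal{O}_2\in\mathcal{D}$ inside the field $\mathbb{Q}(\sqrt{2p})$. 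Distinct such $p$ give distinct fields $\mathbb{Q}(\sqrt{2p})$ and hence distinct orders, so infinitude of the hypothesised set again forces $\mathcal{D}$ to be infinite.

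The main obstacle is concentrated in part~(1), specifically the passage from $d\mid v$ to $\varepsilon\in\mathcal{O}_p$ together with the single exceptional prime $p=2$ for $d=209991$, which is not governed by the relation $d\mid v$ and must instead be settled by the explicit parity computation quoted from the surrounding text. Everything else reduces to a direct application of the class number formula and of the earlier existence results (Example~\ref{example 3.2}(1) and Proposition~\ref{proposition 5.2}(1)), the only routine point being that the constructions yield pairwise distinct orders.
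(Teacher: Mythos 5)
Your proposal is correct and follows the same route as the paper, which simply cites Remark~\ref{remark 5.5} (with its tables), the evenness of $v$ for $d=209991$, Example~\ref{example 3.2}(1), and Proposition~\ref{proposition 5.2}(1); you have merely spelled out the details the paper leaves implicit (in particular that $d\mid v$ forces $\varepsilon\in\mathcal{O}_p$, hence $(\mathcal{O}_K^{\times}:\mathcal{O}_p^{\times})=1$ and $|{\rm Pic}(\mathcal{O}_p)|=2p\neq 2$ by the class number formula, and that the single exception $p=2$ for $d=209991$ is covered by the stated parity of $v$). All steps check out, including the distinctness of the orders produced in parts (2) and (3).
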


\begin{proof}
Part (1) is an immediate consequence of Remark~\ref{remark 5.5} (including the tables above) and the fact that the second component of the fundamental unit for $d=209991$ is even. Part (2) follows from Example~\ref{example 3.2}(1), and (3) is a simple consequence of Proposition~\ref{proposition 5.2}(1).
\end{proof}

\bigskip
\noindent {\bf ACKNOWLEDGEMENTS.} We want to thank A. Geroldinger and the referee for helpful comments and suggestions that improved the readability and quality of this paper.

\end{document}